\newtheorem{theorem}{Theorem}[section]
\newtheorem{lemma}[theorem]{Lemma}
\newtheorem{corollary}[theorem]{Corollary}
\theoremstyle{definition}
\theoremstyle{definition}
\newtheorem{definition}[theorem]{Definition}
\theoremstyle{definition}
\newtheorem{remark}[theorem]{Remark}
\theoremstyle{remark}
\newtheorem*{claim}{Claim}
\numberwithin{equation}{section}
\newcommand{\C}{\mathbb{C}}
\newcommand{\Z}{\mathbb{Z}}
\newcommand{\N}{\mathbb{N}}
\newcommand{\g}{\mathfrak{g}}
\newcommand{\h}{\mathfrak{h}}
\newcommand{\n}{\mathfrak{n}}
\newcommand{\p}{\mathfrak{p}}
\newcommand{\fbar}{\overline{f}}
\newcommand{\calO}{\mathcal{O}}
\newcommand{\Ext}{\operatorname{Ext}}
\newcommand{\Hom}{\operatorname{Hom}}
\newcommand{\rank}{\operatorname{rank}}
\newcommand{\dimension}{\operatorname{dim}}
\newcommand{\vspan}{\operatorname{span}}
\newcommand{\im}{\operatorname{im}}
\newcommand{\id}{\operatorname{id}}
\newcommand{\supp}{\operatorname{supp}}
\newcommand{\wt}{\operatorname{wt}}
\newcommand{\ch}{\operatorname{ch}}
\newcommand{\ad}{\operatorname{ad}}
\title{Category $\calO$ for Takiff Lie algebras}
\author{Matthew Chaffe}
\address{School of Mathematics, University of Birmingham, Birmingham, B15 2TT, UK}
\email{mxc167@student.bham.ac.uk}
\date{}
\begin{document}
\maketitle

\begin{abstract}
We study category $\calO$ for Takiff Lie algebras $\g \otimes \C[\epsilon]/(\epsilon^2)$ where $\g$ is the Lie algebra of a reductive algebraic group over $\C$. We decompose this category as a direct sum of certain subcategories and use an analogue of parabolic induction functors and twisting functors for BGG category $\calO$ to prove equivalences between these subcategories. We then use these equivalences to compute the composition multiplicities of the simple modules in the Verma modules in terms of composition multiplicities in the BGG category $\calO$ for reductive subalgebras of $\g$. We conclude that the composition multiplicities are given in terms of the Kazhdan--Lusztig polynomials.
\end{abstract}

\section{Introduction}

\subsection{Category $\calO$}

For a reductive Lie algebra $\g$ over $\C$, an important part of the representation theory of $\g$ is the subcategory $\calO$ of $U(\g)$-Mod introduced by Bernstein--Gelfand--Gelfand (BGG) in the 1970s. This category contains many interesting modules, including all finite dimensional modules. The objects of this category are the modules satisfying certain finiteness conditions, and the category has many desirable homological properties. One of the main results in the theory of category $\calO$ was the proof of the Kazhdan--Luzstig conjecture, which gives the composition multiplicities of the simple modules in the Verma modules in terms of values of certain polynomials, called the Kazhdan--Luzstig polynomials, at 1. The definition of category $\calO$ was later extended to a more general class of Lie algebras, namely those with a triangular decomposition (see for example \cite{RCW}).

In this paper, we consider the Takiff Lie algebra of a reductive Lie algebra $\g$. Such algebras do not have a triangular decomposition in the sense of \cite{RCW}, but they do have a decomposition that is triangular in a weaker sense (see \cite[\S2]{W}) which allows us define an analogue of BGG category $\calO$. The case of Takiff $\mathfrak{sl}_2$ was studied by Mazorchuk and Söderberg in \cite{MS}. In particular, they gave a natural analogue of category $\calO$ for Takiff $\mathfrak{sl}_2$. We study the natural generalisation of this definition to Takiff $\g$ for a general reductive Lie algebra $\g$. Our main result is a formula for the composition multiplicities of the simple modules in the Verma modules in this category.

\subsection{Takiff Lie algebras and the category $\calO_\epsilon$}

The main type of Lie algebras we consider in this paper are the Takiff Lie algebras. For any Lie algebra $\g$, we define Takiff $\g$, denoted $\g_\epsilon$, to be the Lie algebra $\g_\epsilon = \g \otimes \C[\epsilon]/(\epsilon^2)$. We consider these algebras in the case where $\g$ is the Lie algebra of a reductive group. Such algebras were first considered by Takiff in \cite{T}, in which the invariant polynomials are considered. More recently, highest weight theory for these algebras was considered in \cite{W} and their graded representation theory was considered in \cite{CG}.

We write $x$ and $\overline{x}$ respectively for the elements $x \otimes 1$ and $x \otimes \epsilon \in \g_\epsilon$, and if $\mathfrak{a} \subseteq \g$, then we write $\mathfrak{a}$, $\overline{\mathfrak{a}}$, and $\mathfrak{a}_\epsilon$ for the subsets $\{a \in \g_\epsilon: a \in \mathfrak{a}\}$, $\{\overline{a} \in \g_\epsilon: a \in \mathfrak{a}\}$, and $\mathfrak{a} \otimes \C[\epsilon]/(\epsilon^2)$ of $\g_\epsilon$ respectively. If $u = \sum_i x_{i, 1} x_{i, 2} \dots x_{i, n_i} \in U(\g)$, the enveloping algebra of $\g$, for some $x_{i, j} \in \g$ and $n_i \in \Z_{>0}$ then we also write $\overline{u} = \sum_i \overline{x}_{i, 1} \overline{x}_{i, 2} \dots \overline{x}_{i, n_i} \in U(\g_\epsilon)$.

We fix a maximal torus $\h$ of $\g$, root system $\Phi$, and choice of positive roots $\Phi^+$, which gives a triangular decomposition $\n^- \oplus \h \oplus \n$ of $\g$. We then have a direct sum decomposition $\n^-_\epsilon \oplus \h_\epsilon \oplus \n_\epsilon$ of $\g_\epsilon$. This is not triangular in the sense of \cite{RCW} since $\h_\epsilon$ does not act diagonalisably on $\g_\epsilon$, but it is triangular in the weaker sense of \cite[\S2]{W}.

Highest weight theory for truncated current Lie algebras, of which Takiff algebras are a special case, was considered in \cite[\S3]{W}. In the case of Takiff algebras, the Verma modules are defined to be $M_{\lambda, \mu} = U(\g_\epsilon) \otimes_{U(\h_\epsilon \oplus \n_\epsilon)} \C_{\lambda, \mu}$ where $\C_{\lambda, \mu}$ is the 1-dimensional $U(\h_\epsilon \oplus \n_\epsilon)$-module on which $\h$ and $\overline{\h}$ act by $\lambda \in \h^*$ and $\mu \in \h^*$ respectively, and $\n_\epsilon$ acts by 0. We go further and define the category $\calO_\epsilon$ of $U(\g_\epsilon)$-modules to be the full subcategory of $U(\g_\epsilon)$-Mod with objects $M$ such that $M$ is finitely generated, the subalgebra $\h$ acts semisimply on $M$, and both $\n_\epsilon$ and $\overline{\h}$ act locally finitely on $M$. This definition is a generalisation of the definition of \emph{classical category $\calO$} for Takiff $\mathfrak{sl}_2$ given in \cite[\S2.4]{MS}. We will sometimes write expressions such as $\calO_\epsilon (\g)$ to emphasise to which algebra $\g$ we are referring. As in BGG category $\calO$, the Verma module $M_{\lambda, \mu}$ has a unique simple quotient $L_{\lambda, \mu}$ and these $L_{\lambda, \mu}$ form a set of representatives of the isomorphism classes of simple modules in $\calO_\epsilon$. In particular, both the Verma modules and simple modules in $\calO_\epsilon$ are parameterised by $\h^* \oplus \h^*$.

We now mention two differences between our category $\calO_\epsilon$ and the BGG category $\calO$. Firstly, the category $\calO_\epsilon$ is not Artinian; this will follow from our computation of composition multiplicities in \S6. Secondly, unlike in BGG category $\calO$ projective covers need not exist. In the $\g = \mathfrak{sl}_2$ case, for example, \cite[Theorem 17]{MS} states that $\calO_\epsilon$ decomposes as a direct sum of certain subcategories. Some of these subcategories are equivalent to the category of finite-dimensional modules for the power series ring $\C[[X]]$, in which it is easy to see that the trivial module has no projective cover.

\subsection{Composition multiplicities of Verma modules}

Although the modules in $\calO_\epsilon$ are not necessarily finite length, we can give a well defined notion of $[M: L_{\lambda, \mu}]$, the composition multiplicity of $L_{\lambda, \mu}$ in $M$ (see Lemmas \ref{compmultdefn} and \ref{compmultalternatedefn}). In BGG category $\calO$, the composition multiplicities are given by the values at 1 of certain Kazhdan--Lusztig polynomials (see for example \cite{HTT}). In the category $\calO_\epsilon$, we can calculate the composition multiplicities using the following result, where we write $p$ for Kostant's partition function (with the same sign convention as in \cite[\S1.16]{H}), and write $\bullet_2$ for the shifted action of $W$ on $\h^*$ given by $w \bullet_2 \lambda = w(\lambda + 2\rho) - 2\rho$ where $\rho = \sum_{\alpha \in \Phi^+} \frac{1}{2} \alpha$. Here we extend elements $\mu \in \h^*$ to elements of $\g^*$ by setting $\mu(x) = 0$ for $x \in \n \oplus \n^-$ and extending linearly, and we then write $\g^\mu$ to mean the centraliser (with respect to the coadjoint action) of this extension. We also recall that a standard parabolic subalgebra of $\g$ is a subalgebra containing the standard Borel $\h \oplus \n$.

\begin{theorem}
\label{maincompmultthm}
Let $\lambda, \lambda', \mu, \mu' \in \h^*$ and let $w$ be an element of the Weyl group $W$ of $\g$ of minimal length such that $\g^{w(\mu)}$ is the Levi factor of a standard parabolic subalgebra. Then:
\begin{align*}
[M_{\lambda, \mu}: L_{\lambda', \mu'}] = \delta_{\mu \mu'}\sum_{\chi \in \mathbb{Z}\Phi} p(\chi) [M_{w \bullet_2 \lambda + \chi}(\g^{w(\mu)}): L_{w \bullet_2 \lambda'}(\g^{w(\mu)})]
\end{align*}
where $M_{w \bullet_2 \lambda + \chi}(\g^{w(\mu)})$ and $L_{w \bullet_2 \lambda'}(\g^{w(\mu)})$ are respectively the Verma module and the simple module for $\g^{w(\mu)}$. All but finitely many terms of this sum are zero, so $[M_{\lambda, \mu}: L_{\lambda, \mu}]$ is always finite.
\end{theorem}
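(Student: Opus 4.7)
The proof proceeds in three reductions, each simplifying the parameters involved.

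\emph{Step 1 (the $\delta_{\mu\mu'}$ factor).} Since $\overline{\h}$ is abelian and acts locally finitely on every object of $\calO_\epsilon$, there is a decomposition of each such object into generalised $\overline{\h}$-weight spaces. On $M_{\lambda,\mu}$, each $\overline{h}\in\overline{\h}$ acts as $\mu(h)\cdot\mathrm{id}+N_h$ with $N_h$ locally nilpotent: the commutator $[\overline{h},x]=\alpha(h)\overline{x}$ for a root vector $x\in\g_\alpha$ strictly increases the number of barred PBW factors, so $N_h$ eventually vanishes on any given element. Consequently $M_{\lambda,\mu}$, and hence its simple quotient $L_{\lambda,\mu}$, sits in the single generalised $\overline{\h}$-weight $\mu$, forcing $[M_{\lambda,\mu}:L_{\lambda',\mu'}]=0$ unless $\mu=\mu'$.

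\emph{Step 2 (reduction to standard Levi).} Apply the twisting functor $T_w$ for the Weyl element $w$ of the statement, constructed in the paper as an analogue of Arkhipov's functor in BGG $\calO$. This $T_w$ is an equivalence between the $\mu$-part and the $w(\mu)$-part of $\calO_\epsilon$ with $T_w M_{\lambda,\mu}\cong M_{w\bullet_2\lambda,\,w(\mu)}$ and the corresponding matching of simples. The $\bullet_2$-shift appears because $\h$ acts on $\n_\epsilon=\n\oplus\overline{\n}$ with each positive root having multiplicity two, so the natural Weyl vector for $\g_\epsilon$ is $2\rho$ rather than $\rho$. This reduces the problem to the case $w=1$, where $\mathfrak{l}:=\g^\mu$ is already a standard Levi subalgebra.

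\emph{Step 3 (parabolic reduction).} Let $\mathfrak{p}=\mathfrak{l}\oplus\n_\mathfrak{p}$ be the standard parabolic with Levi $\mathfrak{l}$. Transitivity of induction gives
\[
M_{\lambda,\mu}\;\cong\;U(\g_\epsilon)\otimes_{U(\mathfrak{p}_\epsilon)}M^{\mathfrak{l}_\epsilon}_{\lambda,\mu},
\]
and since $\mathfrak{l}=\g^\mu$ forces $\mu|_{[\mathfrak{l},\mathfrak{l}]}=0$, the $\overline{\h}$-action splits cleanly into a scalar $\mu$-piece on an $\mathfrak{l}$-factor and a nilpotent piece carried by an $S(\overline{\n^-})$-tensor factor. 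The parabolic-induction equivalence established earlier in the paper then identifies the $\mu$-block of $\calO_\epsilon(\g)$ with data in BGG $\calO(\mathfrak{l})$ enhanced by an $S(\overline{\n^-})$-tensor factor, in such a way that $L_{\lambda',\mu}$ corresponds to $L^{\mathfrak{l}}_{\lambda'}$ and $M_{\lambda,\mu}$ corresponds to an object whose Grothendieck class is $\sum_{\chi}\dim S(\overline{\n^-})_\chi\cdot[M^{\mathfrak{l}}_{\lambda+\chi}]$. Since $\dim S(\overline{\n^-})_\chi=p(\chi)$ (the $\h$-weights of $\overline{\n^-}$ coincide with those of $\n^-$), extracting the coefficient of $[L^{\mathfrak{l}}_{\lambda'}]$ yields the stated formula. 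Finiteness of the sum follows because $p(\chi)\neq 0$ forces $\chi\in\mathbb{Z}_{\geq 0}\Phi^+$ while $[M^{\mathfrak{l}}_{\lambda+\chi}:L^{\mathfrak{l}}_{\lambda'}]\neq 0$ forces $\lambda+\chi-\lambda'\in\mathbb{Z}_{\geq 0}\Phi^+_\mathfrak{l}$, leaving only finitely many $\chi$ for fixed $\lambda,\lambda'$.

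\emph{Main obstacle.} The delicate step is Step 3: setting up the parabolic-induction equivalence in a form that genuinely separates the nilpotent $\overline{\h}$-action from the scalar $\mu$-action, and matching simples across the equivalence so that the Kostant weighting of $S(\overline{\n^-})$ enters only as the coefficient $p(\chi)$ in the Verma decomposition, with no leakage into the simple modules themselves. Once that equivalence is in place (and the composition-multiplicity formalism of Lemmas \ref{compmultdefn} and \ref{compmultalternatedefn} applies), Steps 1 and 2 --- including tracking the shift $w\bullet_2\lambda$ --- are formal consequences of the block-decomposition and twisting technology developed earlier in the paper.
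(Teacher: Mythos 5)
Your overall architecture matches the paper's: the block decomposition $\calO_\epsilon=\bigoplus_\mu\calO_\epsilon^\mu$ gives the $\delta_{\mu\mu'}$, twisting functors move $\mu$ into standard-Levi position, parabolic induction reduces to the Levi, and a character computation in the $\mu=0$ block produces the partition-function sum. However, Step 3 contains a genuine error in the combinatorial factor. The restriction functor is $R(M)=M^{\mathfrak{r}_\epsilon}$, and for a highest weight module this keeps only the weight spaces $M^{\lambda'}$ with $\lambda-\lambda'\in\C\Phi_\mu$ (Lemma \ref{highestweightmodinvariants}); equivalently it discards the PBW directions $f_\beta$ \emph{and} $\overline{f}_\beta$ for $\beta\in\Phi^+\setminus\Phi_\mu$. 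Thus $R$ carries $M_{\lambda,\mu}(\g_\epsilon)$ to the Takiff Verma for the \emph{Levi}, and the symmetric-algebra factor that survives into $\calO_\epsilon^0(\g^{w(\mu)})$ is $S(\overline{\n^-\cap\mathfrak{l}})$, not $S(\overline{\n^-})$: the weighting in the final formula is the Kostant partition function of $\g^{w(\mu)}$, supported on $\Z\Phi_{w(\mu)}$, not that of $\g$. (The theorem's wording invites your reading, but the proof forces this interpretation.) Your version, with $p=p_\g$, fails already for $\g=\mathfrak{sl}_2$ and $\mu\neq 0$: there $\g^\mu=\h$, every $M_{\lambda+\chi}(\h)=L_{\lambda+\chi}(\h)$ is one-dimensional, so your formula returns $[M_{\lambda,\mu}:L_{\lambda-n\alpha,\mu}]=1$ for all $n\geq 0$, whereas $M_{\lambda,\mu}$ is simple by \cite[Theorem 7.1]{W}. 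Relatedly, $\calO_\epsilon^\mu(\g)$ is equivalent to the Takiff category $\calO_\epsilon^0(\g^{w(\mu)})$, not to ``BGG $\calO(\mathfrak{l})$ enhanced by an $S(\overline{\n^-})$-tensor factor''; only the $\h$-character of the Verma admits that decomposition.

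There is a second, smaller gap in Step 2. The paper does not construct a functor $T_w$ for general $w\in W$; it constructs $T_\alpha$ for a simple reflection, and the equivalence $\calO_\epsilon^\nu\simeq\calO_\epsilon^{s_\alpha(\nu)}$ holds only under the hypothesis $\nu(h_\alpha)\neq 0$. To compose these along a reduced word $w=s_{\alpha_n}\cdots s_{\alpha_1}$ one must verify $((s_{\alpha_{i-1}}\cdots s_{\alpha_1})\mu)(h_{\alpha_i})\neq 0$ at every stage, and this is precisely what the minimal-length hypothesis on $w$ guarantees (the lemma in \S 2.3). Your argument never uses the minimality of $w$, so as written some intermediate twist could fail to be defined as an equivalence. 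Step 1 and the appeal to Lemmas \ref{compmultdefn} and \ref{compmultalternatedefn} (and to the fact that the equivalences match Vermas with Vermas and simples with simples, hence preserve multiplicities) are in line with the paper.
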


The composition multiplicities of the simple modules in the Verma modules are therefore given in terms of the composition multiplicities of the simple modules in the Verma modules for the reductive algebra $\g^\mu$, and hence in terms of the Kazhdan--Lusztig polynomials.

\subsection{Parabolic induction and twisting functors}

The category $\calO_\epsilon$ decomposes as a direct sum $\calO_\epsilon = \bigoplus_{\mu \in \h^*} \calO_\epsilon^\mu$, where $\calO_\epsilon^\mu$ is the full subcategory of $\calO_\epsilon$ with objects consisting of the modules $M$ such that for any $h \in \h$, the action of $\overline{h} - \mu(h)$ on $M$ is locally nilpotent (see Lemma \ref{calOdecomposition}). The subcategory $\calO^\mu_\epsilon$ contains $M_{\lambda, \mu}$ and $L_{\lambda, \mu}$ for any $\lambda \in \h^*$. To prove Theorem \ref{maincompmultthm}, we reduce to the case $\calO^0_\epsilon$ using two equivalences. The first uses an analogue of the concept of twisting functors for BGG category $\calO$ (see \cite{AS} for a discussion of these functors in the BGG category $\calO$ case).

\begin{theorem}
\label{twistingtheoremshort}
Let $\alpha$ be a simple root, let $s_\alpha \in W$ be the simple reflection corresponding to $\alpha$, and let $\mu \in \h^*$ be such that $\mu(h_\alpha) \neq 0$. Then the categories $\calO_\epsilon^\mu(\g)$ and $\calO_\epsilon^{s_\alpha(\mu)}(\g)$ are equivalent.
\end{theorem}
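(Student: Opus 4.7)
The plan is to construct an Arkhipov-type twisting functor $T_\alpha$ for $\g_\epsilon$ and verify that, under the hypothesis $\mu(h_\alpha) \neq 0$, it induces the desired equivalence.

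\textbf{Construction.} The element $f_\alpha$ acts locally $\ad$-nilpotently on $U(\g_\epsilon)$, so the Ore localisation $U(\g_\epsilon)[f_\alpha^{-1}]$ exists. Set $S_\alpha := U(\g_\epsilon)[f_\alpha^{-1}]/U(\g_\epsilon)$, a $U(\g_\epsilon)$-bimodule. Fix an involutive lift $\sigma_\alpha$ of $s_\alpha$ to $\Aut(\g)$, extended to $\g_\epsilon$ by acting identically on the $\epsilon$-factor, and let $\sigma_\alpha^*$ denote the operation of twisting a left $U(\g_\epsilon)$-action by $\sigma_\alpha$. Define
\[
T_\alpha M \;:=\; \sigma_\alpha^*\bigl(S_\alpha \otimes_{U(\g_\epsilon)} M\bigr).
\]

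\textbf{Preservation of $\calO_\epsilon^\mu$.} First I would check that $T_\alpha$ preserves $\calO_\epsilon$: finite generation, $\h$-semisimplicity and local finiteness of $\n_\epsilon$ transfer by the usual arguments, using the ad-local nilpotence of $f_\alpha$. The essential calculation is the $\overline{\h}$-action. From $[\overline{h}, f_\alpha] = -\alpha(h)\overline{f}_\alpha$ and $[\overline{h}, \overline{f}_\alpha] = 0$ one derives in $U(\g_\epsilon)[f_\alpha^{-1}]$ the identity
\[
[\overline{h}, f_\alpha^{-k}] \;=\; k\alpha(h)\,\overline{f}_\alpha f_\alpha^{-k-1} \qquad (k \ge 1).
\]
Iterating, if $(\overline{h} - \mu(h))^N m = 0$ then $(\overline{h} - \mu(h))^{N+1}(f_\alpha^{-k}\otimes m) = 0$, because every correction term picks up a factor of $\overline{f}_\alpha$ and $\overline{f}_\alpha^{\,2} = 0$ in $U(\g_\epsilon)$. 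Hence $S_\alpha \otimes_{U(\g_\epsilon)} M \in \calO_\epsilon^\mu$. The subsequent twist by $\sigma_\alpha$ sends $\overline{h}$ to $\overline{s_\alpha(h)}$ and so replaces the generalised $\overline{\h}$-eigenvalue $\mu$ by $s_\alpha(\mu)$, placing $T_\alpha M$ in $\calO_\epsilon^{s_\alpha(\mu)}$.

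\textbf{Equivalence.} Since $\sigma_\alpha^2 = \id$, the natural candidate for a quasi-inverse is $T_\alpha$ itself in the reverse direction. To prove $T_\alpha \circ T_\alpha \cong \id$ on $\calO_\epsilon^\mu$, I would write down an explicit natural transformation $M \to T_\alpha^2 M$ and show that it is invertible. Unwinding the two twists, this amounts to inverting an operator on $S_\alpha \otimes_{U(\g_\epsilon)} \sigma_\alpha^* S_\alpha \otimes_{U(\g_\epsilon)} M$ built from the element $\overline{f}_\alpha f_\alpha^{-1}$ — which squares to zero and commutes with $\h \oplus \overline{\h}$ — together with the scalar $\mu(h_\alpha)$. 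The hypothesis $\mu(h_\alpha) \neq 0$ is precisely what makes this operator invertible. I would first verify the isomorphism on Verma modules $M_{\lambda, \mu}$ by a direct PBW computation, and then extend to arbitrary objects of $\calO_\epsilon^\mu$ using right-exactness of $T_\alpha$ together with the fact that every such module admits a presentation by Verma modules.

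\textbf{Expected obstacle.} The main difficulty is this last extension step: turning the isomorphism from Vermas into a natural isomorphism on all of $\calO_\epsilon^\mu$. The Takiff identity $\overline{x}^{\,2} = 0$ substantially simplifies matters relative to classical BGG twisting (where $T_\alpha$ is an equivalence only on regular blocks), but the bimodule bookkeeping for $S_\alpha \otimes_{U(\g_\epsilon)} \sigma_\alpha^* S_\alpha$ and the precise way $\mu(h_\alpha) \neq 0$ controls invertibility will still require careful tracking.
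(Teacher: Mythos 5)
There is a genuine gap, and it starts with the construction itself. Localising only at $f_\alpha$ does not produce a functor that preserves $\calO_\epsilon$. Test it on the $\mathfrak{sl}_2$ Verma module $M_{\lambda,\mu}$, which has PBW basis $f_\alpha^a\fbar_\alpha^b\otimes 1$: your $S_\alpha\otimes_U M_{\lambda,\mu}$ has basis $f_\alpha^{-i}\fbar_\alpha^{\,b}\otimes 1$ with $i>0$, $b\ge 0$, of weight $\lambda+(i-b)\alpha$. Since $i-b$ ranges over all of $\Z$ with infinitely many representatives for each value, every weight space is infinite dimensional and the weights are unbounded above, so $T_\alpha M_{\lambda,\mu}\notin\calO_\epsilon$. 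This is why the paper localises at the multiplicative set $F_\alpha=\{f_\alpha^i\fbar_\alpha^j\}$ (inverting $\fbar_\alpha$ as well) and then quotients by the sub-bimodule $U'$ spanned by monomials in which \emph{either} exponent is non-negative; the resulting $S_\alpha$ has basis $f_\alpha^{-i}\fbar_\alpha^{-j}v$ with both $i,j>0$, which restores finite-dimensional weight spaces and sends $M_{\lambda,\mu}$ to $M_{s_\alpha\bullet_2\lambda,\,s_\alpha(\mu)}$. Relatedly, your repeated use of ``$\fbar_\alpha^{\,2}=0$ in $U(\g_\epsilon)$'' is false: the relation $\epsilon^2=0$ forces $[\overline{x},\overline{y}]=0$, i.e.\ $\overline{\g}$ is an abelian ideal, but $U(\g_\epsilon)$ contains $S(\overline{\g})$ and $\fbar_\alpha^{\,2}\neq 0$ there (otherwise Verma modules would be finite rank over the barred part, which they are not). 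Both the $\overline{\h}$-eigenvalue computation and the proposed ``invertibility'' argument lean on this non-identity.

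The shape of the quasi-inverse is also not what you propose. The paper does not prove $T_\alpha^2\cong\id$; it takes the right adjoint $G_\alpha M=\mathcal{H}(\Hom_U(S_\alpha,\phi_\alpha^{-1}(M)))$ and shows the unit $M\to G_\alpha T_\alpha M$ and counit $T_\alpha G_\alpha N\to N$ are isomorphisms on $\calO_\epsilon^\mu$ and $\calO_\epsilon^{s_\alpha(\mu)}$ respectively, where $\mu(h_\alpha)\neq 0$ enters through an $\mathfrak{sl}_2$-type freeness statement ($M$ is free over $U(\langle f_\alpha,\fbar_\alpha\rangle)$ on a basis of $\langle e_\alpha,\overline{e}_\alpha\rangle$-invariants). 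Your extension step from Vermas to all of $\calO_\epsilon^\mu$ via ``presentations by Verma modules'' is also unavailable as stated: projective covers need not exist in $\calO_\epsilon$, and a general object has only a finite filtration with highest-weight sections (Lemma \ref{finitefiltration}), not a two-step Verma presentation; the paper instead combines that filtration with exactness of both functors and the Five Lemma.
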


By standard results concerning the action of $W$ on $\h^*$, for any $\mu \in \h^*$ there exists $w \in W$ such that $\calO^\mu_\epsilon$ is equivalent to $\calO^{w(\mu)}_\epsilon$ and $\g^{w(\mu)}$ is the Levi factor of a standard parabolic subalgebra of $\g$. The second equivalence uses a form of parabolic induction to obtain the following result (see Theorem \ref{maintheorem} for a more precise statement).

\begin{theorem}
\label{maintheoremshort}
Let $\mu \in \h^*$ be such that the centraliser $\g^\mu$ is the Levi factor of a standard parabolic subalgebra. Then the categories $\calO_\epsilon^\mu(\g)$ and $\calO_\epsilon^\mu(\g^\mu)$ are equivalent.
\end{theorem}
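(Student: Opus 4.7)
Let $\p = \g^\mu \oplus \n_\p$ be the standard parabolic with Levi $\g^\mu$ and nilradical $\n_\p$, so that we have a vector space decomposition $\g_\epsilon = (\n_\p^-)_\epsilon \oplus \g^\mu_\epsilon \oplus (\n_\p)_\epsilon$. The plan is to build explicit quasi-inverse functors. Define the parabolic induction functor $F \colon \calO_\epsilon^\mu(\g^\mu) \to \calO_\epsilon^\mu(\g)$ by $F(N) = U(\g_\epsilon) \otimes_{U(\p_\epsilon)} N$, where $N$ is inflated to a $\p_\epsilon$-module by letting $(\n_\p)_\epsilon = \n_\p \oplus \overline{\n_\p}$ act by zero. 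Define $G \colon \calO_\epsilon^\mu(\g) \to \calO_\epsilon^\mu(\g^\mu)$ by $G(M) = \{v \in M : \n_\p v = \overline{\n_\p} v = 0\}$; a short bracket computation using that $\g^\mu$ normalises $\n_\p$ shows $G(M)$ is a $\g^\mu_\epsilon$-submodule of $M$.

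The first step is to check these functors land in the claimed categories. By PBW, $F(N) \cong U((\n_\p^-)_\epsilon) \otimes N$ as vector spaces, so $\h$-semisimplicity and local finiteness of $\n_\epsilon$ are standard (parallel to parabolic induction in BGG category $\calO$). The non-obvious property is local nilpotence of $\overline{h} - \mu(h)$ on $F(N)$: using $[\overline{h}, f_\beta] = -\beta(h)\overline{f_\beta}$ and $[\overline{h}, \overline{f_\beta}] = 0$, each application of $\overline{h} - \mu(h)$ to a PBW element $u \otimes n$ either decreases the nilpotency index on the $N$-factor, or converts some unbarred factor $f_\beta$ of $u$ to its barred version $\overline{f_\beta}$, which commutes with $\overline{h}$; the process must terminate. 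For $G$, the one subtle condition is finite generation of $G(M)$, which I would derive \emph{a posteriori} from the final step using the PBW decomposition.

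Next I would show $G \circ F \cong \id$. The inclusion $N \hookrightarrow G(F(N))$ via $n \mapsto 1 \otimes n$ is immediate. For the reverse inclusion, take $v \in G(F(N))$ and expand $v = \sum_i u_i \otimes n_i$ in a PBW basis of $U((\n_\p^-)_\epsilon)$. If some $u_i$ has positive PBW degree, pick such a monomial of maximal degree and apply a suitable $\overline{e_\beta}$ with $\beta \in \n_\p$; using $[\overline{e_\beta}, f_\beta] = \overline{h_\beta}$ together with the fact that $\overline{h_\beta}$ acts on $N$ as $\mu(h_\beta) \neq 0$ modulo nilpotents, the leading term of $\overline{e_\beta} v$ in an appropriate filtration is nonzero, contradicting $\overline{e_\beta} v = 0$.

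The main obstacle is showing that the counit $\phi_M \colon F(G(M)) \to M$, $u \otimes v \mapsto uv$, is an isomorphism. The essential structural input is the pair of relations $[\overline{e_\beta}, f_\beta] = \overline{h_\beta}$ and $[\overline{e_\beta}, \overline{f_\beta}] = 0$ for $\beta \in \n_\p$, combined with the hypothesis $\mu(h_\beta) \neq 0$ that precisely characterises when $\g^\mu$ is the Levi of a standard parabolic: on each generalised $\overline{\h}$-eigenspace, for each $\beta \in \n_\p$ the pair $(\overline{e_\beta}, f_\beta)$ generates, up to locally nilpotent corrections, a Weyl-algebra-like structure in which $\overline{h_\beta}$ is invertible. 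Surjectivity of $\phi_M$ amounts to $M = U((\n_\p^-)_\epsilon) \cdot G(M)$. I would prove this by contradiction: if $\bar{M} := M / U((\n_\p^-)_\epsilon) G(M)$ were nonzero, then $G(\bar{M}) \neq 0$ since $\n_\p \oplus \overline{\n_\p}$ acts locally nilpotently, and the invertibility of $\overline{h_\beta}$ would allow one to lift any invariant class in $\bar{M}$ back to a genuine invariant in $M$ (by successively modifying a representative using terms in $U((\n_\p^-)_\epsilon) G(M)$), contradicting that its class was nonzero. Injectivity is handled in parallel by induction on PBW depth. It is worth emphasising that without the condition $\mu(h_\beta) \neq 0$ for $\beta \in \n_\p$, the functor $F$ would only be a fully faithful embedding, as in BGG parabolic induction; this non-vanishing is exactly what promotes the embedding to a full equivalence.
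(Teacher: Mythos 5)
Your functors are exactly the paper's adjoint pair $I = U(\g_\epsilon)\otimes_{U(\p_\epsilon)}(-)$ and $R = (-)^{\mathfrak{r}_\epsilon}$, and your high-level plan (show the unit and counit are isomorphisms) matches Theorem \ref{maintheorem}. You have also correctly isolated the structural input that makes this an equivalence rather than a mere embedding: $\mu(h_\beta)\neq 0$ for every root $\beta$ of the nilradical, so that $\overline{h}_\beta$ acts invertibly on objects of $\calO_\epsilon^\mu$, and $[\overline{e}_\beta, f_\beta]=\overline{h}_\beta$, $[\overline{e}_\beta,\overline{f}_\beta]=0$ give a Weyl-algebra-like pairing. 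Your verification that $\overline{h}-\mu(h)$ acts locally nilpotently on $I(N)$ is correct.

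However, there is a genuine gap at precisely the point where the theorem is hard. Your surjectivity argument for the counit reduces to the claim that any $\mathfrak{r}_\epsilon$-invariant class in $M/U((\n_\p^-)_\epsilon)G(M)$ lifts to a genuine invariant of $M$; this is exactly the statement that $R$ is right exact on the relevant short exact sequence, and "successively modifying a representative using the invertibility of $\overline{h}_\beta$" is not an argument — in rank greater than one the corrections produced by $[\overline{e}_\beta, f_\gamma]=\kappa_{(\beta,-\gamma)}\overline{e}_{\beta-\gamma}$ scatter across other root directions (landing variously in $\overline{\mathfrak{r}}$, $\overline{\g^\mu}$, or $\overline{\n_\p^-}$), and the same cross-terms undermine the "leading term is nonzero" step in your proof that $G\circ F\cong\id$. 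The paper does not attempt this direct computation: it proves exactness of $R$ (Lemma \ref{functorexactness}) by first establishing Theorem \ref{vermaexttheorem}, namely that highest weight modules of weights $(\lambda,\mu)$ and $(\lambda',\mu)$ admit no extensions unless $\lambda-\lambda'\in\Z\Phi_\mu$; this in turn rests on an analysis of the centre $Z(\g_\epsilon)$ via Takiff's generators, a Harish--Chandra-type projection $\pi'$, and Richardson's formula $\rank(d_\mu\phi)=\dimension(\mathfrak{z}(\g^\mu))$, which identifies $\ker(\psi_\mu)$ with $\C\Phi_\mu$. Only after that do weight arguments show $N^{\mathfrak{r}_\epsilon}=\bigoplus_{\lambda-\lambda'\in\C\Phi_\mu}N^{\lambda'}$ for highest weight $N$, giving exactness and then the Five Lemma reduction of both the unit and counit to highest weight modules. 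Your proposal contains no substitute for this chain, and without it neither the surjectivity nor the injectivity of $\phi_M$ (which you dispatch with "handled in parallel by induction on PBW depth") is established. If you want to pursue the direct Whittaker/Skryabin-style route, the place to invest effort is a precise filtration on $U((\n_\p^-)_\epsilon)$ making the $\overline{h}_\beta$-terms dominant over all cross-terms; as written, the proof is incomplete.
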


We also show that there is an equivalence between $\calO_\epsilon^\mu(\g^\mu)$ and $\calO_\epsilon^0(\g^\mu)$. Combining this with Theorems \ref{twistingtheoremshort} and \ref{maintheoremshort}, we see that any $\calO_\epsilon^\mu(\g)$ is equivalent to $\calO_\epsilon^0(\g^{w(\mu)})$ for a suitable element $w$ of the Weyl group of $\g$. Therefore, once we understand the image of the simple modules and Verma modules under these equivalences, the problem of computing composition multiplicities of the simple modules in the Verma modules in general reduces to computing them in $\calO_\epsilon^0(\g)$ for all reductive Lie algebras $\g$.

To compute the composition multiplicities of the Verma modules in $\calO_\epsilon^0(\g)$, we decompose them as a direct sum of $\h$-modules, each of which is isomorphic as an $\h$-module to a Verma module for $\g$. Since the simple modules in $\calO_\epsilon^0$ and the simple modules in BGG category $\calO$ are isomorphic as $\g$-modules (see Lemma \ref{simple0modules}), this gives Theorem \ref{maincompmultthm} in the case $\mu = 0$. Applying Theorems \ref{twistingtheoremshort} and \ref{maintheoremshort} then gives the full result.

\subsection{Structure of the paper}

In \S2.1 and \S2.2, we define our basic notation, and in \S2.3 we review some standard results on centralisers, Levi subalgebras, and parabolic subalgebras.

In \S3.1 we give analogues of several fundamental results in BGG category $\calO$ that hold in $\calO_\epsilon$, and in \S3.2 we prove the existence of the decomposition $\calO_\epsilon = \bigoplus_{\mu \in \h^*} \calO_\epsilon^\mu$ mentioned above.

In \S4 we state precisely and prove Theorem \ref{maintheoremshort}. In \S4.1 we prove a result on the action of the centre $Z(\g_\epsilon)$ of the enveloping algebra of $\g_\epsilon$ on highest weight modules, and then in \S4.2 use this result to show the exactness the functors in the equivalence. We complete the proof using a standard argument involving the Five Lemma inspired by \cite[Proposition 2.1]{FP}.

Twisting functors for $\calO_\epsilon$ are defined in \S5.1 and shown to be equivalences between certain $\calO_\epsilon^\mu$ in \S5.2. 

Finally, in \S6 we first show that the notion of composition multiplicity is well defined in our category and then proceed to compute the composition multiplicities of the simple modules in the Verma module $M_{\lambda, 0}$. By the parabolic induction and twisting functor equivalences, this is then enough to determine the composition multiplicities for all Verma modules in $\calO_\epsilon$.

\subsection*{Acknowledgements}
The author would like to thank Simon Goodwin and Lewis Topley for their support and advice, and the EPSRC for financial support.

\section{Preliminaries}

\subsection{Notation and conventions}

Throughout, all vector spaces, Lie algebras, associative algebras are over $\C$, and unless otherwise specified tensor products are over $\C$. Associative algebras are unital and not necessarily commutative. Unless otherwise stated, if $A$ is an associative $\C$-algebra, then by an $A$-module we always mean a left $A$-module. 

\subsection{Reductive groups and Lie algebras}

Let $\g$ be the Lie algebra of a connected reductive algebraic group $G$ over $\C$. Choose a maximal torus $\h$ of $\g$, and let $\Phi \subseteq \h^*$ be the root system of $\g$ with respect to $\h$, writing $\g_\alpha$ for the $\alpha$ root space of $\g$. We fix a system of simple roots $\Delta \subseteq \Phi$ and corresponding system of positive roots $\Phi^+ \subseteq \Phi$. We fix a partial order on $\Phi$ in the usual way. We fix a basis $\{e_\alpha : \alpha \in \Phi^+\} \cup \{f_\alpha : \alpha \in \Phi^+\} \cup \{h_\alpha : \alpha \in \Delta\}$ of $\g$ such that $e_\alpha \in \g_\alpha$, $f_\alpha \in \g_{-\alpha}$, and $h_\alpha := [e_\alpha, f_\alpha] \in \h$ is such that $\alpha(h_\alpha) = 2$. We will sometimes write $e_{-\alpha}$ instead of $f_\alpha$. For $\alpha, \beta \in \Phi$ such that $\alpha + \beta \in \Phi$, let $\kappa_{(\alpha, \beta)} \in \C$ be such $[e_\alpha, e_\beta] = \kappa_{(\alpha, \beta)} e_{\alpha + \beta}$. Let $\n = \vspan\{e_\alpha : \alpha \in \Phi^+\}$, let $\n^- = \vspan\{f_\alpha : \alpha \in \Phi^+\}$, and let $\mathfrak{b} = \h \oplus \n$ be the standard Borel subalgebra of $\g$. Let $W$ be the Weyl group of $\g$, and for $\alpha \in \Delta$ let $s_\alpha \in W$ be the simple reflection corresponding to $\alpha$. If $H \subseteq G$ is a maximal toral subgroup of $G$ corresponding to $\h$, then $W$ can also be viewed as $N_G(H)/H$, where $N_G(H)$ is the normaliser of $H$ in $G$. We write $\g_\epsilon$ for Takiff $\g$ and write $x$ for $x \otimes 1 \in \g_\epsilon$ and $\overline{x}$ for $x \otimes \epsilon \in \g_\epsilon$. There is a triangular decomposition of $\g$ given by $\g_\epsilon = \n^-_\epsilon \oplus \h_\epsilon \oplus \n_\epsilon$.

\subsection{Centralisers and parabolic subalgebras}

Let $\mu \in \h^*$, and extend $\mu$ to an element of $\g^*$ by setting $\mu(x) = 0$ for $x \in \n \oplus \n^-$. The centraliser $\g^\mu$ can be described as $\g^\mu = \h \oplus (\bigoplus_{\beta \in \Phi_\mu} \g_\beta$), where $\Phi_\mu = \{\beta \in \Phi : \mu(h_\beta) = 0\}$. This is the Levi factor of some parabolic subalgebra $\mathfrak{q}$ of $\g$, which by \cite[Lemma 3.8.1]{CM} is conjugate to a standard parabolic subalgebra $\p$. It then follows that $\g^\mu$ is conjugate to the Levi factor $\mathfrak{l}$ of $\p$, and then (again using \cite[Lemma 3.8.1]{CM}) the root system $\Phi_\mu$ is $W$-conjugate to the root system of $\mathfrak{l}$ by some element $w \in W$. We then observe that $\Phi_{w(\mu)} = w(\Phi_\mu)$, and so $\mathfrak{l} = \g^{w(\mu)}$. This gives us the first part of the following lemma, which will allow us to use Theorem \ref{twistingtheoremshort} to show that any $\calO_\epsilon^\mu$ is equivalent to some $\calO_\epsilon^{\mu'}$, where $\mu'$ satisfies the hypotheses of Theorem \ref{maintheoremshort}:

\begin{lemma}
Let $\mu \in \h^*$. Then there exists $w \in W$ and a Levi factor $\mathfrak{l}$ of a standard parabolic $\p$ such that $\g^{w(\mu)} = \mathfrak{l}$. Furthermore, if we pick $w$ of minimal length subject to this condition and let  $w = s_{\alpha_n} s_{\alpha_{n-1}} \dots s_{\alpha_1}$ be a reduced expression for $w$, then for each $1 \leq i \leq n$, we have $((s_{\alpha_{i-1}} \dots s_{\alpha_1})\mu)(h_{\alpha_i}) \neq 0$.
\end{lemma}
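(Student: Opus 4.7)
The first assertion essentially reproduces the discussion immediately preceding the lemma. The centraliser has the form $\g^\mu = \h \oplus \bigoplus_{\beta \in \Phi_\mu} \g_\beta$ and is the Levi factor of a parabolic $\mathfrak{q}$; by \cite[Lemma 3.8.1]{CM}, $\mathfrak{q}$ is $G$-conjugate to a standard parabolic $\p$ with Levi $\mathfrak{l}$; applying the same lemma to the associated root systems yields some $w \in W$ with $w(\Phi_\mu)$ equal to the root system of $\mathfrak{l}$. Since $\Phi_{w(\mu)} = w(\Phi_\mu)$, this gives $\g^{w(\mu)} = \mathfrak{l}$. I would simply collect these observations into a single line.

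The second assertion is the real content of the lemma, and I plan to argue it by contradiction. Fix $w$ of minimal length such that $\g^{w(\mu)}$ is a standard Levi, fix a reduced expression $w = s_{\alpha_n} \cdots s_{\alpha_1}$, and suppose for some $1 \leq i \leq n$ that $\nu(h_{\alpha_i}) = 0$, where $\nu := (s_{\alpha_{i-1}} \cdots s_{\alpha_1})\mu$. By the standard reflection formula on $\h^*$, $s_{\alpha_i}(\nu) = \nu - \nu(h_{\alpha_i})\alpha_i = \nu$, so $s_{\alpha_i}$ fixes $\nu$. Setting
\[
w' := s_{\alpha_n} \cdots s_{\alpha_{i+1}} s_{\alpha_{i-1}} \cdots s_{\alpha_1},
\]
we then have $w'(\mu) = w(\mu)$, so $\g^{w'(\mu)} = \mathfrak{l}$ is again a standard Levi. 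But $w'$ is expressed as a product of $n-1$ simple reflections, so $\ell(w') \leq n-1 < \ell(w)$, contradicting the minimality of $\ell(w)$.

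I do not anticipate any serious obstacle here. The only ingredients are the definition of the $W$-action on $\h^*$ (which supplies the reflection formula) and the fact that the length of an element of $W$ is bounded above by the number of factors in any expression as a product of simple reflections. The first part of the lemma is a direct repackaging of material cited just before it, and the second part is a short deletion argument.
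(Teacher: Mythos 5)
Your proposal is correct and follows essentially the same route as the paper: the first assertion is read off from the discussion preceding the lemma, and the second is the same deletion-of-$s_{\alpha_i}$ contradiction using $s_{\alpha_i}(\nu)=\nu-\nu(h_{\alpha_i})\alpha_i$. Your explicit remark that $\ell(w')\leq n-1<n=\ell(w)$ because the expression for $w$ is reduced is a slightly more careful phrasing of the paper's closing step, but the argument is the same.
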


\begin{proof} By the above discussion, we can certainly find $w = s_{\alpha_n} s_{\alpha_{n-1}} \dots s_{\alpha_1}$ and $\mathfrak{l}$ the Levi factor of a standard parabolic subalgebra satisfying $\g^{w(\mu)} = \mathfrak{l}$. Now, suppose that $w$ has minimal length such that $g^{w(\mu)} = \mathfrak{l}$ and that for some $i$ we have $((s_{\alpha_{i-1}} \cdots s_{\alpha_1})\mu)(h_{\alpha_i}) = 0$. In general, if $\mu(h_\alpha) = 0$ for some $\alpha \in \Phi$ then $s_\alpha(\mu) = \mu - \mu(h_\alpha) \alpha = \mu$, so in particular, if $w' := s_{\alpha_n} \dots s_{\alpha_{i+1}} s_{\alpha_{i-1}} \dots s_{\alpha_1}$, then $w(\mu) = w'(\mu)$ and so $\mathfrak{l} = \g^{w(\mu)} = \g^{w'(\mu)}$. But $w'$ has shorter length than $w$, giving a contradiction.
\end{proof}

\section{Category $\calO$ for Takiff Lie algebras}

\subsection{Elementary results in $\calO_\epsilon$}

We now formally state the definition of the category $\calO_\epsilon$ described in the introduction:

\begin{definition}
\label{catOdef}
The category $\calO_{\epsilon}$ is the full subcategory of $U(\g_\epsilon)$-Mod with objects $M$ satisfying the following:

\begin{enumerate}
\item[$(\calO1)$] $M$ is finitely generated.

\item[$(\calO2)$] $\h$ acts semisimply on $M$.

\item[$(\calO3)$] $\n_\epsilon$ and $\overline{\h}$ act locally finitely on $M$.
\end{enumerate}
\end{definition}

As for BGG category $\calO$, the category $\calO_\epsilon$ is closed under submodules, quotients, and direct sums, and every module $M \in \calO_\epsilon$ is Noetherian.

Let $M \in \calO_{\epsilon}$. We say $v \in M$ is a \emph{weight vector} of weight $\lambda \in \h^*$ if for any $h \in \h$, we have $h \cdot v = \lambda(h)v$, and write $M^\lambda$ for the subspace of $M$ consisting of weight vectors of weight $\lambda$. We say $v \in M$ is a \emph{highest weight vector} of weight $(\lambda, \mu) \in \h^* \oplus \h^*$ if it satisfies the following:

\begin{enumerate}
\item[(1)] $\n_\epsilon \cdot m = 0$.

\item[(2)] For all $h \in \h$, we have $h \cdot m = \lambda(h)m$, i.e. $m \in M^\lambda$.

\item[(3)] For all $h \in \h$, we have $\overline{h} \cdot m = \mu(h)m$.
\end{enumerate}
We also say $m$ is \emph{maximal} of weight $\lambda$ if it satisfies conditions (1) and (2) but not necessarily (3).

\begin{lemma}
\label{fdweightspaces}
Let $M \in \calO_{\epsilon}$. Then:

\begin{enumerate}
\item[(a)] Each weight space $M^\lambda$ of $M$ is finite dimensional; and

\item[(b)] The set $\{\lambda \in \h^* : M^{\lambda} \neq 0\}$  is contained in $\bigcup_{\lambda \in I}\{\lambda - \gamma: \gamma \in \Z_{\geq 0}\Phi^+\}$ for some finite subset $I \subseteq \h^*$.
\end{enumerate}
\end{lemma}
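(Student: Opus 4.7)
The plan is to adapt the standard BGG category $\calO$ argument: produce a finite-dimensional $\h$-stable subspace $V \subseteq M$ which generates $M$ as a $U(\n^-_\epsilon)$-module, after which both (a) and (b) follow from the weight structure of $U(\n^-_\epsilon)$.

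First I would use $(\calO1)$ together with the semisimplicity of $\h$ from $(\calO2)$ to pick a finite generating set $m_1, \dots, m_k$ of $\h$-weight vectors. The key observation is that $\overline{\h} + \n_\epsilon$ is a Lie subalgebra of $\g_\epsilon$, since $[\overline{\h}, \n_\epsilon] = \overline{[\h, \n]} \subseteq \overline{\n} \subseteq \n_\epsilon$. I would set $V := \sum_{i=1}^k U(\overline{\h} + \n_\epsilon) \cdot m_i$ and show each summand is finite-dimensional as follows: by PBW (with $\n_\epsilon$-monomials placed to the left of $\overline{\h}$-monomials), one has $U(\overline{\h} + \n_\epsilon) \cdot m_i = U(\n_\epsilon) \cdot U(\overline{\h}) \cdot m_i$; local finiteness of $\overline{\h}$ in $(\calO3)$ makes $U(\overline{\h}) \cdot m_i$ finite-dimensional, and then applying local finiteness of $\n_\epsilon$ to a basis of the latter gives the claim. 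Since $\h$ normalizes $\overline{\h} + \n_\epsilon$ and the $m_i$ are weight vectors, $V$ is $\h$-stable and hence $\h$-semisimple by $(\calO2)$, so $V$ has only finitely many weights.

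PBW then gives $M = U(\g_\epsilon) \cdot V = U(\n^-_\epsilon) \, U(\h_\epsilon) \, U(\n_\epsilon) \cdot V = U(\n^-_\epsilon) \cdot V$, where the last equality uses that $V$ is stable under $U(\n_\epsilon)$, $U(\overline{\h})$, and $U(\h)$. Part (b) is now immediate: every weight of $M$ has the form $\lambda - \gamma$ with $\lambda$ an $\h$-weight of $V$ and $\gamma \in \Z_{\geq 0}\Phi^+$, and the set $I$ of weights of $V$ is finite. For part (a), I would note that $U(\n^-_\epsilon)$ has a PBW basis of $\h$-weight vectors with weights in $-\Z_{\geq 0}\Phi^+$, and since $\n^-_\epsilon$ is finite-dimensional, only finitely many such monomials have any prescribed weight; combined with the finite-dimensionality of each weight space of $V$, this presents $M^\lambda$ as a finite sum of finite-dimensional subspaces.

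The argument is essentially routine, the only wrinkle being that $\overline{\h}$ acts only locally finitely rather than semisimply, so it cannot simply be folded into the Cartan as in the BGG setting. This is handled by enlarging the generating subspace using $U(\overline{\h} + \n_\epsilon)$ in place of $U(\n_\epsilon)$, and I would not anticipate any substantive obstacle.
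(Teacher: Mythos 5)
Your argument is correct and is precisely the ``natural extension'' of Humphreys' proofs of $(\calO 4)$ and $(\calO 5)$ that the paper invokes without writing out: the only new ingredient is absorbing the locally finite $\overline{\h}$-action into the finite-dimensional generating subspace via the subalgebra $\overline{\h}+\n_\epsilon$, which you handle correctly. No gaps.
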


\begin{proof} The proofs are a natural extension of the proofs of the equivalent statements in the BGG category $\calO$ (see the proofs of ($\calO4$) and ($\calO5$) in \cite[\S1.1]{H}).
\end{proof}

\begin{corollary}
\label{maxhwcorollary}
Let $M \in \calO_\epsilon$ and suppose there exists a maximal vector of weight $\lambda$ in $M$. Then for some $\mu \in \h^*$ there exists a highest weight vector of weight $(\lambda, \mu)$ in $M$.  
\end{corollary}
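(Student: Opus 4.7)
The plan is to extract a highest weight vector from the space of all maximal vectors of weight $\lambda$ by a simultaneous eigenvector argument for the commutative Lie algebra $\overline{\h}$.

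First, let $V = \{v \in M^\lambda : \n_\epsilon \cdot v = 0\}$ be the space of maximal vectors in $M$ of weight $\lambda$. This is a subspace of $M^\lambda$, which is finite-dimensional by Lemma \ref{fdweightspaces}(a), so $V$ is finite-dimensional. By hypothesis $V$ is nonzero.

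Next, I would verify that $V$ is stable under the action of $\overline{\h}$. For $v \in V$, $h' \in \h$ and $\overline{h} \in \overline{\h}$, the fact that $[\h, \overline{\h}] = \overline{[\h,\h]} = 0$ gives $h' \cdot (\overline{h} \cdot v) = \overline{h} \cdot (h' \cdot v) = \lambda(h')\,\overline{h} \cdot v$, so $\overline{h} \cdot v \in M^\lambda$. To check that $\n_\epsilon$ annihilates $\overline{h} \cdot v$ it suffices to check this for $e \in \n$ and for $\overline{e} \in \overline{\n}$, since these span $\n_\epsilon$. For $e \in \n$, $[e, \overline{h}] = \overline{[e,h]} \in \overline{\n} \subseteq \n_\epsilon$, so $e \cdot (\overline{h} \cdot v) = \overline{h} \cdot (e \cdot v) + [e,\overline{h}] \cdot v = 0$; for $\overline{e} \in \overline{\n}$, $[\overline{e},\overline{h}] = 0$ in $\g_\epsilon$ because $\epsilon^2 = 0$, so $\overline{e} \cdot (\overline{h} \cdot v) = \overline{h} \cdot (\overline{e} \cdot v) = 0$. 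Hence $\overline{h} \cdot v \in V$.

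Now $\overline{\h}$ is an abelian Lie algebra (again because $\epsilon^2 = 0$) acting on the nonzero finite-dimensional complex vector space $V$. By a standard inductive argument, fixing a basis $h_1, \dots, h_n$ of $\h$, choose an eigenvector of $\overline{h}_1$ in $V$, then an eigenvector of $\overline{h}_2$ in that eigenspace (which is preserved by $\overline{h}_2$ since the $\overline{h}_i$ commute), and so on, to produce a common eigenvector $v \in V$ for all of $\overline{\h}$. The eigenvalues then assemble into a linear functional $\mu \in \h^*$ defined by $\overline{h} \cdot v = \mu(h)v$. This $v$ satisfies conditions (1), (2) and (3), so it is a highest weight vector of weight $(\lambda, \mu)$.

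The only slightly subtle step is checking that $\overline{\h}$ preserves $V$, which reduces to a short bracket calculation; the rest is the standard fact that commuting operators on a finite-dimensional complex vector space have a common eigenvector. There is no need to invoke local finiteness of $\overline{\h}$ from condition $(\calO 3)$ here, since finite-dimensionality of $V$ already follows from $(\calO 2)$ and Lemma \ref{fdweightspaces}.
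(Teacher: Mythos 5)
Your proof is correct and takes essentially the same approach as the paper: both define $V$ as the (finite-dimensional, by Lemma \ref{fdweightspaces}) space of maximal vectors of weight $\lambda$, check via the same bracket computations that $\overline{\h}$ preserves $V$, and then extract a common eigenvector for the commuting action of $\overline{\h}$. Your write-up is slightly more explicit about why $[\overline{e},\overline{h}]=0$ and about the inductive construction of the common eigenvector, but there is no substantive difference.
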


\begin{proof} Let $V$ be the space of maximal vectors of weight $\lambda$. This is finite dimensional since it is a subspace of $M^\lambda$, which is finite dimensional by Lemma \ref{fdweightspaces}. Now, the action of $\overline{\h}$ preserves weight spaces and for any $h \in \h$ and $n \in \n$ or $\overline{\n}$ we have $n \cdot (\overline{h} \cdot v) = \overline{h} \cdot (n \cdot v) + [n, \overline{h}] \cdot v$. Hence since $[n, \overline{h}]$ lies in $\overline{\n}$, the action of $\overline{\h}$ preserves maximal vectors, so $\overline{\h}$ acts on $V$, so since $\overline{\h}$ is commutative there is some common eigenvector $v \in V$ for this action. But then by definition $v$ is a highest weight vector of weight $(\lambda, \mu)$ for some $\mu \in \h^*$.
\end{proof}

We say $M$ is a highest weight module of weight $(\lambda, \mu)$ if there is some highest weight vector $v \in M$ of weight $(\lambda, \mu)$ that generates $M$. One class of highest weight modules are the \emph{Verma modules} $M_{\lambda, \mu}$, which are defined by $M_{\lambda, \mu} = U(\g_\epsilon) \otimes_{U(\mathfrak{b}_\epsilon)} \C_{\lambda, \mu}$ where $\C_{\lambda, \mu}$ is the one dimensional $U(\mathfrak{b}_\epsilon)$-module where $\n_\epsilon$ acts by 0, $\h$ acts by $\lambda$, and $\overline{\h}$ acts by $\mu$. These Verma modules are the universal highest weight modules in the sense that for any highest weight module $M$ of weight $(\lambda, \mu)$, we have that $M_{\lambda, \mu}$ maps onto $M$ (by sending $1 \otimes 1_{\lambda, \mu} \in M_{\lambda, \mu}$ to a non-zero highest weight vector $v \in M$ of weight $(\lambda, \mu)$ that generates $M$), and this map is unique up to scalar multiplication. They behave similarly to the Verma modules in BGG category $\calO$; for example by a similar argument to \cite[Theorem 1.2]{H} for any Verma module $M_{\lambda, \mu}$ the weight space $M_{\lambda, \mu}^\lambda$ is 1 dimensional, and $M_{\lambda, \mu}$ has a unique maximal proper submodule and hence a unique simple quotient denoted $L_{\lambda, \mu}$. Once we have Lemma \ref{finitefiltration} it will easily follow that these $L_{\lambda, \mu}$ form an irredundant set of representatives of isomorphism classes of simple modules in $\calO_\epsilon$. We also observe that any finite dimensional simple module must lie in $\calO_\epsilon$ and so must be equal to $L_{\lambda, \mu}$ for some $\lambda, \mu \in \h^*$. We now prove the following lemma, which is an analogue of a standard result (see \cite[Corollary 1.2]{H}) in BGG category $\calO$.

\begin{lemma}
\label{finitefiltration}
Let $M \in \calO_{\epsilon}$. Then $M$ has a finite filtration 
\[0 = M_0 \subseteq M_1 \subseteq M_2 \subseteq \cdots \subseteq M_{k-1} \subseteq M_k = M\]
such that each $M_{i+1}/M_i$ is a highest weight module.
\end{lemma}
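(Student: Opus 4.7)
The plan is to mimic the proof of the analogous result in BGG category $\calO$ (cf.\ \cite[Corollary 1.2]{H}) by induction on a finite invariant $d(M)$ that measures the size of $M$ above a chosen set of weight generators. Two ingredients of $\calO_\epsilon$ make the analogy work: weight spaces are finite dimensional and the weight support is bounded above by a finite set, both provided by Lemma \ref{fdweightspaces}.

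First, using $(\calO1)$ and $(\calO2)$, I choose weight-vector generators $v_1, \ldots, v_n$ of $M$ with weights $\lambda_1, \ldots, \lambda_n$, and set
\[
S(M) = \{\mu \in \h^* : M^\mu \neq 0 \text{ and } \mu \geq \lambda_i \text{ for some } i\}, \qquad d(M) = \sum_{\mu \in S(M)} \dimension M^\mu.
\]
Lemma \ref{fdweightspaces}(b) provides a finite $I \subseteq \h^*$ with $\mu \leq \lambda'$ for some $\lambda' \in I$ whenever $M^\mu \neq 0$, so each $\mu \in S(M)$ lies in some interval $\lambda_i \leq \mu \leq \lambda'$; since each such interval in the root order has only finitely many points, $S(M)$ is finite, and Lemma \ref{fdweightspaces}(a) then makes $d(M)$ a well-defined non-negative integer. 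I then induct on $d(M)$. If $d(M) = 0$, every $v_i$ lies in $M^{\lambda_i} = 0$, forcing $M = 0$ and the trivial filtration works. For the inductive step, pick $\mu$ maximal in $S(M)$: any $\nu > \mu$ satisfies $\nu > \lambda_i$, so $\nu \in S(M)$ would contradict the maximality of $\mu$, whence $M^\nu = 0$ and $\mu$ is in fact a maximal weight of $M$. Therefore $\n_\epsilon \cdot M^\mu = 0$, every nonzero element of $M^\mu$ is maximal, and Corollary \ref{maxhwcorollary} supplies a highest weight vector $v \in M^\mu$ of some weight $(\mu, \mu')$. Set $N = U(\g_\epsilon) \cdot v$; this is a highest weight submodule. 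Taking the (nonzero) images of the $v_i$ as weight-vector generators of $M/N$ yields $S(M/N) \subseteq S(M)$, and since $0 \neq v \in N^\mu$ we obtain $d(M/N) \leq d(M) - \dimension N^\mu < d(M)$. The inductive hypothesis then produces a highest-weight filtration of $M/N$, which pulls back to $M$ and is prepended by $0 \subset N$ to give the desired filtration.

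The only point requiring care beyond the BGG template is the finiteness of $d(M)$. In the Takiff setting, the subalgebra $U(\overline{\n}) \subseteq U(\n_\epsilon)$ contains elements that raise $\h$-weight by positive roots, so a priori the weight support of $M$ could extend arbitrarily far above $\{\lambda_1, \ldots, \lambda_n\}$. Lemma \ref{fdweightspaces}(b) is precisely what rules this out, and once $d(M)$ is a finite non-negative integer the rest of the argument proceeds exactly as in the reductive case.
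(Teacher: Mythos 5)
Your proof is correct and follows essentially the same strategy as the paper's: repeatedly locate a maximal weight, use Corollary \ref{maxhwcorollary} to extract a highest weight submodule, pass to the quotient, and induct on a finiteness measure. The only difference is the induction parameter — the paper inducts on the dimension of the $U(\n_\epsilon \oplus \overline{\h})$-module generated by the weight-vector generators (finite by $(\calO3)$), whereas you induct on the total dimension of the weight spaces lying above the generators' weights (finite by Lemma \ref{fdweightspaces}); both work.
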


\begin{proof}
By $(\calO1)$ and $(\calO2)$, there exists a finite set $\{v_1, v_2, \dots , v_n\}$ of weight vectors which generate $M$. Let $V$ be the $U(\n_\epsilon \oplus \overline{\h})$-module generated by the $v_i$, which is finite dimensional by $(\calO3)$, and proceed by induction on $\dimension(V)$, observing that if $m$ is maximal, then so are $h \cdot m$ and $\overline{h} \cdot m$ for any $h \in \h$:

If dim($V$) = 1, then any non-zero element of $V$ is a highest weight vector and generates $M$, so $M$ is highest weight. If dim($V$) $>$ 1 then pick some $\lambda$ maximal among the weights of $V$. Then any vector of weight $\lambda$ is maximal, so by a similar argument to the proof of Corollary \ref{maxhwcorollary} there is some $v \in V^{\lambda}$ which is a highest weight vector generating a highest weight submodule $M_1$ of $M$. Hence we may consider $\overline{M} = M/M_1$ which is generated by $\overline{V}$, the image of $V$ in $M/M_1$. Since dim($\overline{V}$) $<$ dim($V$) we are done by induction.
\end{proof}

\begin{corollary}
The set $\{L_{\lambda, \mu}: \lambda, \mu \in \h^*\}$ is an irredundant set of representatives of isomorphism classes of simple modules in $\calO_\epsilon$.
\end{corollary}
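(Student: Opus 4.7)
The plan is to split the proof into two parts: first showing that every simple module in $\calO_\epsilon$ is isomorphic to some $L_{\lambda, \mu}$, and then showing that the $L_{\lambda, \mu}$ are pairwise non-isomorphic.

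For the existence part, let $L \in \calO_\epsilon$ be simple. I would apply Lemma \ref{finitefiltration} to obtain a finite filtration $0 = M_0 \subsetneq M_1 \subseteq \cdots \subseteq M_k = L$ with each $M_{i+1}/M_i$ a highest weight module. Simplicity of $L$ forces $M_1 = L$, so $L$ itself is a highest weight module of some weight $(\lambda, \mu)$. By the universal property of Verma modules recalled just before Lemma \ref{finitefiltration}, there is a surjection $M_{\lambda, \mu} \twoheadrightarrow L$. Since $L$ is simple, its kernel is a maximal proper submodule of $M_{\lambda, \mu}$, which is unique, so $L \cong L_{\lambda, \mu}$.

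For irredundance, suppose $L_{\lambda, \mu} \cong L_{\lambda', \mu'}$. I would recover the pair $(\lambda, \mu)$ from the isomorphism class as follows. By Lemma \ref{fdweightspaces}(b) and the fact that $L_{\lambda, \mu}$ is generated by a highest weight vector of weight $\lambda$, every weight of $L_{\lambda, \mu}$ lies in $\lambda - \Z_{\geq 0} \Phi^+$, and $\lambda$ itself occurs (the image of the highest weight generator is nonzero since $L_{\lambda, \mu} \neq 0$). Therefore $\lambda$ is the unique maximal weight of $L_{\lambda, \mu}$ under the partial order on $\h^*$, so $\lambda = \lambda'$. Moreover, the weight space $L_{\lambda, \mu}^\lambda$ is one-dimensional (as the image of $M_{\lambda, \mu}^\lambda$, which is one-dimensional as noted in the text preceding Lemma \ref{finitefiltration}), and $\overline{\h}$ preserves weight spaces and acts on this line by $\mu$; hence $\mu$ is recovered from the action of $\overline{\h}$ on $L_{\lambda, \mu}^\lambda$, so $\mu = \mu'$.

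Neither step looks to be a substantial obstacle: both rely only on Lemma \ref{finitefiltration}, Lemma \ref{fdweightspaces}, and the standard properties of Verma modules already recorded in the text. The mild subtlety is in the existence argument, where one must use simplicity to collapse the filtration from Lemma \ref{finitefiltration} down to a single highest weight module before invoking universality; but this is immediate from the observation that $M_1$ is a nonzero submodule of $L$.
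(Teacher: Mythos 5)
Your proposal is correct and the existence half is essentially identical to the paper's proof: apply Lemma \ref{finitefiltration}, use simplicity to collapse the filtration to length one, and invoke the uniqueness of the simple quotient of $M_{\lambda,\mu}$. The irredundance half (recovering $\lambda$ as the unique maximal weight and $\mu$ from the $\overline{\h}$-action on the one-dimensional top weight space) is a correct and standard verification that the paper leaves implicit, so you are if anything slightly more complete.
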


\begin{proof}
Let $L$ be a simple module in $\calO_\epsilon$. By Lemma \ref{finitefiltration}, $L$ has a finite filtration such that each quotient is a highest weight module, but since $L$ is simple this filtration must have length 1, i.e. $L$ is a highest weight module. Hence $L$ is a simple quotient of some Verma module $M_{\lambda, \mu}$, but the only such simple quotient is $L_{\lambda, \mu}$.
\end{proof}

\begin{lemma}
\label{centreactionlemma}
The centre $Z(\g_\epsilon)$ of $U(\g_\epsilon)$ acts on any highest weight module $M$ by some character $\chi: Z(\g_\epsilon) \rightarrow \C$.
\end{lemma}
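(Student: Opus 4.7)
The plan is to mimic the classical BGG argument: use the centrality of $z \in Z(\g_\epsilon)$ to show that $z$ preserves the (one-dimensional) highest weight space, so acts there by a scalar, and then propagate this scalar action to all of $M$ using that the highest weight vector generates.

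More concretely, let $v \in M$ be a highest weight vector of weight $(\lambda, \mu)$ generating $M$, and let $z \in Z(\g_\epsilon)$. First I would observe that, because $z$ commutes with every element of $\h \subseteq \g_\epsilon$, $z$ commutes with $\overline{\h} \subseteq \g_\epsilon$, and $z$ commutes with $\n_\epsilon \subseteq \g_\epsilon$, the vector $z \cdot v$ satisfies $h \cdot (z \cdot v) = \lambda(h)(z \cdot v)$, $\overline{h} \cdot (z \cdot v) = \mu(h)(z \cdot v)$, and $\n_\epsilon \cdot (z \cdot v) = 0$. Thus $z \cdot v$ lies in the weight space $M^\lambda$ and is itself either zero or a highest weight vector of weight $(\lambda, \mu)$.

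Next I would invoke the fact, already recorded in the excerpt, that the Verma module weight space $M_{\lambda, \mu}^\lambda$ is one-dimensional. Since $M$ is highest weight of weight $(\lambda, \mu)$ there is a surjection $M_{\lambda, \mu} \twoheadrightarrow M$, and this surjection is weight-preserving, so $M^\lambda$ is at most one-dimensional, and in fact is spanned by $v$. Consequently there exists a scalar $\chi(z) \in \C$ with $z \cdot v = \chi(z) v$.

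Finally, since $v$ generates $M$ as a $U(\g_\epsilon)$-module and $z$ is central, for any $u \in U(\g_\epsilon)$ we get $z \cdot (u \cdot v) = u \cdot (z \cdot v) = \chi(z)(u \cdot v)$, so $z$ acts on all of $M$ by the scalar $\chi(z)$. The map $\chi \colon Z(\g_\epsilon) \to \C$ is then $\C$-linear (from linearity of the action) and multiplicative, since $\chi(z_1 z_2) v = (z_1 z_2) \cdot v = z_1 \cdot (\chi(z_2) v) = \chi(z_1)\chi(z_2) v$, and sends $1$ to $1$; hence it is a character. There is no real obstacle here — the only point that distinguishes this from the BGG setting is the need for a highest weight vector to have a one-dimensional weight space in the Verma module, and this has already been established in the paragraph preceding Lemma \ref{finitefiltration}.
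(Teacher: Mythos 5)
Your proof is correct and is precisely the ``natural extension of the BGG argument'' that the paper invokes without writing out: centrality of $z$ shows $z \cdot v$ is again a highest weight vector of weight $(\lambda,\mu)$, one-dimensionality of $M^\lambda$ (inherited from $M_{\lambda,\mu}^\lambda$) forces $z \cdot v = \chi(z)v$, and the scalar propagates to all of $M$ since $v$ generates. Nothing further is needed.
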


\begin{proof}
Again the proof is a natural extension of the proof of the equivalent statement in BGG category $\calO$.
\end{proof}

\subsection{Decomposition of $\calO_{\epsilon}$}

We now wish to decompose $\calO_{\epsilon}$ into a direct sum of smaller subcategories. In BGG category $\calO$, for a central character $\chi$ the subcategory $\calO_\chi$ is defined to be the full subcategory of $\calO$ whose objects are the modules with generalised central character $\chi$. It can then be shown that $\calO = \bigoplus_\chi \calO_\chi$ and hence any indecomposable module lies in $\calO_\chi$ for some $\chi$ (see \cite[\S1.12]{H} for more details). It is possible to construct a similar decomposition for our $\calO_\epsilon$ using Lemmas \ref{finitefiltration} and \ref{centreactionlemma} which are analogous to the results used in BGG category $\calO$. However, we choose to use the subalgebra $U(\overline{\h})$ instead of $Z(\g_\epsilon)$ via the following lemma (though we will return to the action of $Z(\g_\epsilon)$ in \S4.1):

\begin{lemma}
\label{generalisedeigenvalues}
For $M \in \calO_\epsilon$, let $M^{(-, \mu)}$ be the generalised weight space for the action of $\overline{\h}$ with weight $\mu$, i.e. the set of elements of $M$ such that for all $h \in \h$, there exists $n \in \mathbb{N}$ with $(h - \mu(h))^n \cdot m = 0$. Then:

\begin{enumerate}
\item[(a)] As a vector space, $M = \bigoplus_{\mu \in \h^*} M^{(-, \mu)}$.

\item[(b)] Each $M^{(-, \mu)}$ is a submodule of $M$.
\end{enumerate}
\end{lemma}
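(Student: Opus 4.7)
The plan is to prove (a) using the local finiteness of the $\overline{\h}$-action together with the commutativity of $\overline{\h}$, which reduces everything to standard linear algebra on a finite-dimensional space, and to prove (b) via a direct commutator calculation exploiting the key structural observation that $[\overline{h}, \g_\epsilon] \subseteq \overline{\g}$ for every $h \in \h$.

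For (a), I would take an arbitrary $m \in M$ and consider the subspace $V := U(\overline{\h}) \cdot m$, which is finite-dimensional by $(\calO 3)$. Since $\overline{\h}$ is abelian, the operators it induces on $V$ form a commuting family and therefore admit a joint generalised eigenspace decomposition indexed by characters of $\overline{\h}$, which via the identification $\overline{h} \leftrightarrow h$ correspond exactly to elements $\mu \in \h^*$. Each summand lies in the corresponding $M^{(-,\mu)}$, giving $m \in \sum_\mu M^{(-,\mu)}$. Directness is then immediate, since generalised eigenspaces of a commuting family for distinct characters intersect trivially, as one sees by passing to the finite-dimensional span of any purported non-trivial relation.

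For (b), fix $h \in \h$ and any $x \in \g_\epsilon$. The essential first step is the structural observation $[\overline{h}, x] \in \overline{\g}$, checked on the decomposition $\g_\epsilon = \h \oplus \overline{\g} \oplus \bigoplus_{\alpha \in \Phi} \g_\alpha$: the brackets vanish on $\h$ and on $\overline{\g}$, while for $x \in \g_\alpha$ one computes $[\overline{h}, x] = \alpha(h) \overline{x}$. Since $\overline{\g}$ is abelian and commutes with $\overline{\h}$, the element $[\overline{h}, x]$ commutes with $\overline{h} - \mu(h)$, so $\ad(\overline{h}-\mu(h))^2(x) = 0$ and the commutator expansion of $(\overline{h}-\mu(h))^n$ past $x$ terminates at length one:
\[
(\overline{h}-\mu(h))^n x = x(\overline{h}-\mu(h))^n + n\,[\overline{h}, x]\,(\overline{h}-\mu(h))^{n-1}.
\]
Applying this identity to $m \in M^{(-,\mu)}$ with $n$ one larger than the $(\overline{h}-\mu(h))$-nilpotency index of $m$ annihilates both terms on the right. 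Since this holds for every $h$ and every $x$, the subspace $M^{(-,\mu)}$ is $\g_\epsilon$-stable.

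I do not expect any genuine obstacle: the whole argument is elementary once one notices that the abelianness of $\overline{\g}$ forces the commutator expansion to terminate at length one, which is precisely what propagates the local nilpotency condition from $m$ to $x \cdot m$.
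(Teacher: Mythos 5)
Your proof is correct and follows essentially the same route as the paper: part (a) reduces to the joint generalised eigenspace decomposition of a commuting family on a finite-dimensional space (you use the finite-dimensionality of $U(\overline{\h})\cdot m$ from $(\calO3)$ where the paper uses the finite-dimensional weight spaces $M^\lambda$, a negligible difference), and part (b) is the same commutator computation, which you organise uniformly via $[\overline{h},\g_\epsilon]\subseteq\overline{\g}$ rather than case-by-case on basis vectors as the paper does.
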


\begin{proof}
First observe that $\overline{\h}$ acts on each weight space $M^\lambda$, and since these are finite dimensional and $\overline{\h}$ is abelian, $M^\lambda$ may be written as the sum of generalised weight vectors for $\overline{\h}$.
Now let $v \in M$. Then $v$ can be written as a sum of weight vectors, which can each be written as a sum of generalised weight vectors for $\overline{\h}$. Hence $M$ is the sum of the the $M^{(-, \mu)}$ and this sum is direct, proving (a).

To show that $M^{(-, \mu)}$ is a submodule of $M$, it suffices to show that $x \cdot v \in M^{(-, \mu)}$ for any basis vector $x$ of $\g_\epsilon$. Now let $v \in M^{(-, \mu)}$. If $x = \overline{e_\alpha}, \overline{f}_\alpha, \overline{h}_\alpha$ or $h_\alpha$, then $[x, (\overline{h} - \mu(h))^n] = 0$ for any $h \in \h$ and $n \geq 0$, so $(\overline{h} - \mu(h))^n \cdot (x \cdot v) = x \cdot ((\overline{h} - \mu(h))^n \cdot v)$. But since $v \in M^{(-, \mu)}$, this is 0 for sufficiently large $n$ and hence $x \cdot v \in M^{(-, \mu)}$. For $x = e_\alpha$, we have $[e_\alpha, (\overline{h} - \mu(h))^n] = - n\alpha(h)\overline{e_\alpha}(\overline{h} - \mu(h))^{n-1}$, so:
\[(\overline{h} - \mu(h))^n \cdot (e_\alpha \cdot v) = e_\alpha \cdot ((\overline{h} - \mu(h))^n \cdot v) + n\alpha(h)\overline{e_\alpha}(\overline{h} - \mu(h))^{n-1} \cdot v\]
and again, since $v \in M^{(-, \mu)}$, this is 0 for sufficiently large $n$. Hence $e_\alpha \cdot v \in M^{(-, \mu)}$, and the argument to show $f_\alpha \cdot v \in M^{(-, \mu)}$ is similar.
\end{proof}

In light of this, we define $\calO_\epsilon^\mu$ to be the full subcategory of $\calO_\epsilon$ whose objects are the modules $M$ such that $M = M^{(-, \mu)}$. We then have:
\begin{corollary}
\label{calOdecomposition}
There is a direct sum decomposition $\calO_\epsilon = \bigoplus_{\mu \in \h^*} \calO_\epsilon^\mu$.
\end{corollary}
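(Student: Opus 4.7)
The plan is to deduce this corollary directly from Lemma \ref{generalisedeigenvalues}. Given $M \in \calO_\epsilon$, that lemma already supplies the vector space decomposition $M = \bigoplus_{\mu \in \h^*} M^{(-,\mu)}$ together with the fact that each summand is a $U(\g_\epsilon)$-submodule. What remains is to check three things: (i) for each $M \in \calO_\epsilon$ only finitely many $M^{(-,\mu)}$ are nonzero, (ii) each nonzero summand lies in $\calO_\epsilon^\mu$, and (iii) there are no nonzero morphisms in $\calO_\epsilon$ between objects of $\calO_\epsilon^\mu$ and $\calO_\epsilon^{\mu'}$ for $\mu \neq \mu'$.

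For (i) I would use finite generation. By $(\calO1)$ and $(\calO2)$ we may pick finitely many weight-vector generators $v_1, \dots, v_n$ of $M$, with $v_i \in M^{\lambda_i}$. By Lemma \ref{fdweightspaces}(a), each $M^{\lambda_i}$ is finite dimensional, and it is preserved by $\overline{\h}$ (since $\overline{\h}$ commutes with $\h$). Because $\overline{\h}$ is abelian and acts locally finitely, $M^{\lambda_i}$ decomposes into finitely many generalised $\overline{\h}$-eigenspaces, so $v_i \in \bigoplus_{\mu \in F_i} M^{(-,\mu)}$ for some finite set $F_i \subseteq \h^*$. Setting $F = \bigcup_i F_i$, every generator lies in the submodule $\bigoplus_{\mu \in F} M^{(-,\mu)}$, which must therefore equal $M$. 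Hence $M^{(-,\mu)} = 0$ for $\mu \notin F$.

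For (ii), each $M^{(-,\mu)}$ is a submodule of the Noetherian module $M$, hence finitely generated, giving $(\calO1)$; the conditions $(\calO2)$ and $(\calO3)$ are inherited from $M$ since each $M^{(-,\mu)}$ is a submodule, and by construction $\overline{\h}$ acts with generalised eigenvalue $\mu$ on $M^{(-,\mu)}$, so $M^{(-,\mu)} \in \calO_\epsilon^\mu$. For (iii), any morphism $f : M \to N$ in $\calO_\epsilon$ is $U(\g_\epsilon)$-linear and hence commutes with the action of $\overline{\h}$, so $f(M^{(-,\mu)}) \subseteq N^{(-,\mu)}$; if $M \in \calO_\epsilon^\mu$ and $N \in \calO_\epsilon^{\mu'}$ with $\mu \neq \mu'$, then $N^{(-,\mu)} = 0$, forcing $f = 0$.

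No step is particularly difficult; the only mild subtlety is step (i), where one must combine finite generation $(\calO1)$ with $(\calO2)$ and the finite-dimensionality of weight spaces to ensure that only finitely many $\mu$ contribute. Without this observation the symbol $\bigoplus_{\mu \in \h^*}$ in the statement would require a more delicate categorical interpretation, so I would single out this finiteness check as the main — though still routine — point of the proof.
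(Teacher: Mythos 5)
Your proof is correct and follows the route the paper intends: the corollary is stated there without proof as an immediate consequence of Lemma \ref{generalisedeigenvalues}, and your argument simply supplies the routine verifications (finiteness of the set of nonzero summands via $(\calO1)$, $(\calO2)$ and Lemma \ref{fdweightspaces}, membership of each summand in $\calO_\epsilon^\mu$, and vanishing of morphisms between distinct blocks) that the paper leaves implicit. In particular your finiteness check in step (i) is exactly the point needed to make the direct sum of categories meaningful, and it is carried out correctly.
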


\begin{lemma}
\label{simple0modules}
Let $\mu \in \h^*$. Then $M_{\lambda, \mu}$ and $L_{\lambda, \mu} \in \calO^\mu_\epsilon$ for all $\lambda \in \h^*$. Moreover, if $I: \calO(\g) \rightarrow \calO^0_\epsilon(\g)$ is the functor induced by the surjective homomorphism $\g_\epsilon \rightarrow \g$ with kernel $\overline{\g}$, then $L_{\lambda, 0} \cong I(L_\lambda)$ for all $\lambda \in \h^*$. 
\end{lemma}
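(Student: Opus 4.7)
The plan splits into two parts mirroring the two claims. For the first, that $M_{\lambda,\mu}$ and $L_{\lambda,\mu}$ lie in $\calO_\epsilon^\mu$, I would verify directly that $\overline{h} - \mu(h)$ acts locally nilpotently on $M_{\lambda,\mu}$ for every $h \in \h$; the conclusion for $L_{\lambda,\mu}$ then follows because $\calO_\epsilon^\mu$ is closed under quotients (Lemma~\ref{generalisedeigenvalues}(b) combined with the definition of $\calO_\epsilon^\mu$). Working in the PBW basis of $M_{\lambda,\mu}$ consisting of monomials in $\overline{f}_\beta$ and $f_\beta$ applied to the standard generator $v_{\lambda,\mu}$, the key commutator identities are $[\overline{h}, f_\beta] = -\beta(h)\overline{f}_\beta$, $[\overline{h}, \overline{f}_\beta] = 0$, together with $\overline{h}\cdot v_{\lambda,\mu} = \mu(h) v_{\lambda,\mu}$. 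Pushing $\overline{h} - \mu(h)$ past an arbitrary PBW monomial, each unbarred factor $f_\beta$ is either left alone or converted (up to a scalar) into $\overline{f}_\beta$, while the action on $v_{\lambda,\mu}$ is trivial. Thus each application of $\overline{h} - \mu(h)$ strictly decreases the total degree in the unbarred $f_\beta$'s, so $(\overline{h} - \mu(h))^{N+1}$ annihilates any monomial of unbarred-degree $N$. Local nilpotence follows since every element of $M_{\lambda,\mu}$ is a finite linear combination of such monomials.

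For the second claim, the idea is that $I(L_\lambda)$ is already visibly a highest weight $\g_\epsilon$-module of weight $(\lambda, 0)$. Concretely, $I(L_\lambda)$ is the vector space $L_\lambda$ with $U(\g_\epsilon)$-action where $\g$ acts as on $L_\lambda$ and $\overline{\g}$ acts as zero. Any $\g_\epsilon$-submodule is then automatically a $\g$-submodule, so the simplicity of $L_\lambda$ over $\g$ implies that of $I(L_\lambda)$ over $\g_\epsilon$. The highest weight vector $v_\lambda \in L_\lambda$, viewed in $I(L_\lambda)$, is killed by $\n$ (from $L_\lambda$) and by $\overline{\n}$ (since $\overline{\g}$ acts as $0$), is an $\h$-eigenvector of weight $\lambda$, and satisfies $\overline{h}\cdot v_\lambda = 0$; thus it is a highest weight vector of weight $(\lambda, 0)$ generating $I(L_\lambda)$. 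Hence $I(L_\lambda)$ is a simple quotient of $M_{\lambda, 0}$, and by uniqueness of the simple quotient $I(L_\lambda) \cong L_{\lambda, 0}$.

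No part of the argument presents a serious obstacle; both claims unpack directly from the definitions once the PBW basis is in hand. The only point requiring care is organising the PBW computation so that the degree-lowering effect of $\overline{h} - \mu(h)$ is visible. An alternative would be to observe that $\overline{\h}$ acts on $M_{\lambda,\mu}$ through a filtration on $U(\n^-_\epsilon)$ under which the associated graded action is diagonal with eigenvalue $\mu$, but the direct PBW commutator calculation is the most transparent route.
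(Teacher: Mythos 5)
Your proof is correct, but for the first claim it takes a different (and more computational) route than the paper. The paper's argument is soft: by Lemma~\ref{generalisedeigenvalues} any $M \in \calO_\epsilon$ decomposes as a direct sum of submodules $M^{(-,\mu)}$, the highest weight generator $1 \otimes 1_{\lambda,\mu}$ visibly lies in $M_{\lambda,\mu}^{(-,\mu)}$, and indecomposability of the Verma module then forces $M_{\lambda,\mu} = M_{\lambda,\mu}^{(-,\mu)}$. You instead prove local nilpotence of $\overline{h} - \mu(h)$ on all of $M_{\lambda,\mu}$ directly, via the PBW filtration by degree in the unbarred $f_\beta$'s: since $\ad(\overline{h})$ is a derivation killing the barred generators and sending each $f_\beta$ into $\overline{\n^-}$, it strictly lowers that degree, and $(\overline{h}-\mu(h))\cdot(u \otimes 1_{\lambda,\mu}) = [\overline{h},u]\otimes 1_{\lambda,\mu}$. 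This is a valid and self-contained argument; it avoids invoking indecomposability of $M_{\lambda,\mu}$ but costs an explicit commutator computation (and tacitly uses the standard fact that $M_{\lambda,\mu}$ satisfies the remaining axioms of $\calO_\epsilon$, which the paper also takes as established). For the second claim your argument is essentially the paper's: both observe that $I(L_\lambda)$ is simple and lies in $\calO_\epsilon^0$, hence equals some $L_{\lambda',0}$, and you pin down $\lambda' = \lambda$ by exhibiting the highest weight vector of weight $(\lambda,0)$ rather than by comparing weight spaces, which amounts to the same thing.
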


\begin{proof}
To see that $M_{\lambda, \mu} \in \calO^\mu_\epsilon$, observe that for any $h \in \h$, $(\overline{h} - \mu(h)) \cdot (1 \otimes 1_{\lambda, \mu}) = 0$, so $M_{\lambda, \mu}^{(-, \mu)} \neq 0$. But $M_{\lambda, \mu}$ is indecomposable, so $M_{\lambda, \mu} = M_{\lambda, \mu}^{(-, \mu)}$, i.e. $M_{\lambda, \mu} \in \calO^\mu_\epsilon$. Since $\calO^\mu_\epsilon$ is closed under taking quotients, we also see that $L_{\lambda, \mu} \in \calO^\mu_\epsilon$. Finally, $I(L_\lambda)$ is certainly simple and lies in $\calO_\epsilon^0$, so is isomorphic to $L_{\lambda', 0}$ for some $\lambda' \in \h^*$. But by considering weight spaces, we must have $\lambda' = \lambda$, so $I(L_\lambda) \cong L_{\lambda, 0}$.
\end{proof}

\begin{remark}
\label{vermaextensionsremark}
It follows from this and the fact that Verma modules are indecomposable that $L_{\lambda', \mu'}$ cannot occur as a subquotient of $M_{\lambda, \mu}$ unless $\mu = \mu'$.
\end{remark}

\section{Parabolic Induction}

\subsection{The centre of $U(\g_\epsilon)$}

We now give a precise statement of Theorem \ref{maintheoremshort}:

\begin{theorem}
\label{maintheorem}
Let $\g$ be a reductive Lie algebra with maximal toral subalgebra $\h$ as above and let $\p \subseteq \g$ be a standard parabolic subalgebra of $\g$ with Levi decomposition $\p = \mathfrak{l} \oplus \mathfrak{r}$. Let $\mu \in \h^*$ be such that $\g^\mu = \mathfrak{l}$. Then there is a category equivalence between $\calO_\epsilon^\mu(\g^\mu)$ and $\calO_\epsilon^\mu(\g)$ given by the functors:
\begin{align*}
I : \calO_\epsilon^\mu (\g^\mu) &\longrightarrow \calO_\epsilon^\mu (\g) \\
M &\longmapsto U(\g_\epsilon) \otimes_{U(\p_\epsilon)} M\\
R : \calO_\epsilon^\mu (\g) &\longrightarrow 
\calO_\epsilon^\mu (\g^\mu) \\
M &\longmapsto M^{\mathfrak{r}_\epsilon}
\end{align*}
where for the first functor, we inflate a $U(\g^\mu_\epsilon)$-module $M$ into a $U(\p_\epsilon)$-module by letting $U(\mathfrak{r}_\epsilon)$ act by 0. For the second functor, $M^{\mathfrak{r}_\epsilon} = \{m \in M : \mathfrak{r}_\epsilon \cdot m = 0\}$.
\end{theorem}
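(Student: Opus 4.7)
The plan is to exhibit $I$ and $R$ as a mutually quasi-inverse pair by (i) verifying well-definedness and the adjunction $I \dashv R$, (ii) proving both functors are exact on the respective categories, (iii) checking the unit and counit are isomorphisms on Verma modules, and (iv) bootstrapping to all of $\calO_\epsilon^\mu$ via the five-lemma applied to the finite highest weight filtrations from Lemma \ref{finitefiltration}, in the style of \cite[Proposition 2.1]{FP}.

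First I would check that $I$ and $R$ land in $\calO_\epsilon^\mu$. For $R$, the subspace $M^{\mathfrak{r}_\epsilon}$ is stable under $\mathfrak{l}_\epsilon = \g^\mu_\epsilon$ because $[\mathfrak{l}_\epsilon, \mathfrak{r}_\epsilon] \subseteq \mathfrak{r}_\epsilon$, and it inherits $\h$-semisimplicity along with local finiteness of $\overline{\h}$ and $\n_\epsilon \cap \mathfrak{l}_\epsilon$ from $M$. For $I$, the PBW isomorphism $I(M) \cong U(\mathfrak{r}^-_\epsilon) \otimes M$ of vector spaces (where $\mathfrak{r}^-$ denotes the opposite nilradical) reduces each axiom of $\calO_\epsilon^\mu$ to a routine filtration argument, using in particular that $\overline{\h}$ acts locally finitely on both tensor factors. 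The adjunction $I \dashv R$ is the usual Frobenius reciprocity: a $U(\p_\epsilon)$-map out of $M$ must land in $N^{\mathfrak{r}_\epsilon}$, because $\mathfrak{r}_\epsilon$ annihilates $M$.

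The functor $I$ is exact because $U(\g_\epsilon)$ is free as a right $U(\p_\epsilon)$-module by PBW. Establishing exactness of $R = (-)^{\mathfrak{r}_\epsilon}$ on $\calO_\epsilon^\mu(\g)$ — a priori only left exact — is the technical heart of the argument. Here I would invoke the result from \S4.1 on the action of $Z(\g_\epsilon)$ on highest weight modules, combined with the hypothesis $\g^\mu = \mathfrak{l}$ (equivalently, $\mu(h_\beta) \neq 0$ for every root $\beta$ of $\mathfrak{r}$): this ought to give a block-type decomposition of $\calO_\epsilon^\mu(\g)$ fine enough that any short exact sequence behaves well with respect to $\mathfrak{r}_\epsilon$-invariants, yielding the surjectivity needed to make $R$ exact.

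With exactness of both functors, the five-lemma combined with Lemma \ref{finitefiltration} reduces the verification that $\eta: \id \to RI$ and $\varepsilon: IR \to \id$ are natural isomorphisms to the case of highest weight modules, and then by right exactness to the case of Verma modules. For the unit, $I(M_{\lambda,\mu}(\mathfrak{l})) \cong M_{\lambda,\mu}(\g)$, and the identity $M_{\lambda,\mu}(\g)^{\mathfrak{r}_\epsilon} = 1 \otimes M_{\lambda,\mu}(\mathfrak{l})$ comes down to a weight and central-character computation, again exploiting $\mu(h_\beta) \neq 0$ for every root $\beta$ of $\mathfrak{r}$. For the counit on a highest weight $N$, the evaluation map $U(\g_\epsilon) \otimes_{U(\p_\epsilon)} N^{\mathfrak{r}_\epsilon} \to N$ is surjective by generation of $N$ and injective by a weight-space dimension count. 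The principal obstacle throughout is the exactness of $R$: converting the $Z(\g_\epsilon)$-data of \S4.1 into the required exactness statement is the only genuinely non-formal step, while the rest is either categorical bookkeeping or explicit verification on Verma modules.
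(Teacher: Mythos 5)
Your overall architecture matches the paper's proof: the same adjoint pair, exactness of $I$ via PBW freeness, the Five Lemma together with Lemma \ref{finitefiltration} to reduce to highest weight modules, and explicit verification of the unit and counit there using the central character analysis of \S4.1. The counit argument you sketch (surjectivity from generation, injectivity for Verma modules by comparing characters) and the unit computation are both workable, though the paper handles arbitrary highest weight modules directly rather than reducing further to Verma modules, which sidesteps having to control the kernel of $M_{\lambda,\mu} \twoheadrightarrow M$ in the Five Lemma.

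The genuine gap is in the exactness of $R$, which you correctly identify as the crux but do not actually establish. A ``block-type decomposition'' of $\calO_\epsilon^\mu(\g)$ coming from central characters does not by itself make $(-)^{\mathfrak{r}_\epsilon}$ exact: taking invariants under a nilradical is only left exact in general, even within a single block (this already fails for $(-)^{\n}$ on a regular block of BGG category $\calO$). The missing mechanism is a \emph{weight-support} identification of the invariants: the Ext-vanishing of Theorem \ref{vermaexttheorem} confines the highest weights $(\lambda_i,\mu)$ of the sections of an indecomposable $M$ to a single coset $\lambda + \Z\Phi_\mu$; since $\supp(M) \subseteq \bigcup_i(\lambda_i - \Z_{\geq 0}\Phi^+)$, any weight vector whose weight lies in $\lambda_i + \Z\Phi_\mu$ is annihilated by every $e_\alpha, \overline{e}_\alpha$ with $\alpha \in \Phi^+ \setminus \Phi_\mu$ for the purely combinatorial reason that the target weight space vanishes. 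Hence $M^{\mathfrak{r}_\epsilon} = \bigoplus_{\lambda' \in \lambda + \Z\Phi_\mu} M^{\lambda'}$ (Lemma \ref{highestweightmodinvariants} and the final step of the paper's exactness proof), and projection onto a fixed set of weight spaces is visibly exact. Without this identification the surjectivity of $M^{\mathfrak{r}_\epsilon} \to N^{\mathfrak{r}_\epsilon}$ does not follow, so you should make this step explicit before the rest of your outline goes through.
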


These functors are adjoint since we have inverse isomorphisms $\theta: \Hom(M, N^{\mathfrak{r}_\epsilon}) \rightarrow \Hom(U(\g_\epsilon) \otimes_{U(\p_\epsilon)} M, N)$ and $\eta: \Hom(U(\g_\epsilon) \otimes_{U(\p_\epsilon)} M, N) \rightarrow \Hom(M, N^{\mathfrak{r}_\epsilon})$ given by $\theta(f)(u \otimes m) = u \cdot f(m)$ and $\eta(g)(n) = g(1 \otimes n)$ respectively.

\begin{remark}
Let $\C_{0, \mu}$ be the one-dimensional $U(\g_\epsilon^\mu)$-module where $[\g^\mu, \g^\mu]_\epsilon$ and $\mathfrak{z}(\g^\mu)$ act by 0, and $\overline{\mathfrak{z}(\g^\mu)}$ acts by $\mu$. Then, since $\mu$ is 0 on $[\g^\mu, \g^\mu] \cap \h$, the functors $(-) \otimes_{U(\g_\epsilon^\mu)} \C_{0, \mu}$ and $(-) \otimes_{U(\g_\epsilon^\mu)} \C_{0, -\mu}$ give a pair of mutually inverse equivalences of categories between $\calO_\epsilon^\mu(\g^\mu)$ and $\calO_\epsilon^0(\g^\mu)$. Combining this with Theorem \ref{maintheorem} then gives an equivalence between $\calO_\epsilon^\mu(\g)$ and $\calO_\epsilon^0(\g^\mu)$.
\end{remark}

To prove Theorem \ref{maintheorem} we use an approach similar to that of \cite[Theorem 2.1]{FP} to show that the maps $\phi_M : M \longrightarrow (U(\g_\epsilon) \otimes_{U(\p_\epsilon)} M)^{\mathfrak{r}_\epsilon}$ given by $\phi_M(m) = 1 \otimes m$ and $\varphi_N : U(\g_\epsilon) \otimes_{U(\p_\epsilon)} N^{\mathfrak{r}_\epsilon} \longrightarrow N$ given by $\varphi_N(u \otimes n) = u \cdot n$ are isomorphisms. We first show that the functors $I$ and $R$ are exact, and then use the Five Lemma and Lemma \ref{finitefiltration} to reduce the proof that $\phi_M$ and $\varphi_N$ are always isomorphisms to the case where $M$ and $N$ are highest weight modules. The exactness of $I$ follows easily from the fact that $U(\g_\epsilon)$ is a free $U(\p_\epsilon)$-module, but showing exactness of $R$ is more difficult. The key result we need to prove the exactness of $R$ is the following:

\begin{theorem}
\label{vermaexttheorem}
Let $\lambda, \lambda', \mu, \mu' \in \h^*$ and let $N_1, N_2 \in \calO_\epsilon$ be highest weight modules of weights $(\lambda, \mu)$ and $(\lambda', \mu')$ respectively. Then $\Ext_{\calO_\epsilon}(N_1,N_2) = 0$ unless $\mu = \mu'$ and $\lambda - \lambda' \in \Z\Phi_\mu$, where $\Phi_\mu = \{\alpha \in \Phi : \mu(h_\alpha) = 0\}$. In particular, let $M \in \calO_\epsilon$ be indecomposable, let $0 \subseteq M_1 \subseteq \dots \subseteq M_n = M$ be a filtration such that all $M_i/M_{i-1}$ are highest weight, and let $(\lambda_i, \mu_i)$ be the weight of the highest weight module $M_i/M_{i-1}$. Then there exist $\lambda, \mu \in \h^*$ such that $\lambda_i \in \lambda + \Z\Phi_\mu$ and $\mu_i = \mu$ for all $i$.
\end{theorem}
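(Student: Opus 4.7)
The theorem has two parts: the $\Ext$-vanishing and the filtration structure for indecomposables. Both reduce to a central character separation for highest weight modules in $\calO_\epsilon$. First dispose of the case $\mu \neq \mu'$: Corollary \ref{calOdecomposition} gives $\calO_\epsilon = \bigoplus_\nu \calO_\epsilon^\nu$, so $N_1 \in \calO_\epsilon^\mu$ and $N_2 \in \calO_\epsilon^{\mu'}$ lie in orthogonal summands. For any $M \in \calO_\epsilon$ fitting into a short exact sequence $0 \to N_2 \to M \to N_1 \to 0$, Lemma \ref{generalisedeigenvalues} gives $M = \bigoplus_\nu M^{(-,\nu)}$, and since only the $\nu = \mu, \mu'$ components can be nonzero we obtain $M = M^{(-,\mu)} \oplus M^{(-,\mu')}$ with $M^{(-,\mu')} = N_2$ and $M^{(-,\mu)} \cong N_1$, splitting the sequence.

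\textbf{Paragraph 2.} For $\mu = \mu'$ with $\lambda - \lambda' \notin \Z\Phi_\mu$, I would argue via central characters. By Lemma \ref{centreactionlemma}, $Z(\g_\epsilon)$ acts on each $N_i$ by a scalar character $\chi_i$. If some $z \in Z(\g_\epsilon)$ satisfies $\chi_1(z) \neq \chi_2(z)$, then for any extension $M$ the relations $(z - \chi_1(z))M \subseteq N_2$ and $(z - \chi_2(z))|_{N_2} = 0$ yield $(z - \chi_1(z))(z - \chi_2(z))|_M = 0$, so $z$ acts semisimply on $M$ with two distinct eigenvalues; centrality of $z$ makes the eigenspaces into submodules and one checks easily that $\ker(z - \chi_2(z)) = N_2$, giving the splitting. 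The task is therefore to produce a distinguishing $z$, which is the technical heart of the argument and the subject of Section 4.1. I would analyse the Harish-Chandra-type projection $\pi : U(\g_\epsilon) \twoheadrightarrow U(\h_\epsilon) = S(\h) \otimes S(\overline{\h})$ coming from $U(\g_\epsilon) = U(\h_\epsilon) \oplus (\n_\epsilon^- U(\g_\epsilon) + U(\g_\epsilon)\n_\epsilon)$, so that $z \in Z(\g_\epsilon)$ acts on a highest weight vector of weight $(\lambda, \mu)$ by $\pi(z)(\lambda, \mu)$. Two sources of central elements populate $\pi(Z(\g_\epsilon))$: for $P \in S(\g)^G$ the commutative polynomial $\overline{P} \in S(\overline{\g})^G \subseteq Z(\g_\epsilon)$ acts on $M_{\lambda, \mu}$ by $P(\mu)$ via a short direct computation on the highest weight vector; and "mixed" Casimir-type elements built from the invariant pairing $B_\epsilon(x, \overline{y}) := B(x, y)$ on $\g_\epsilon$ (generalising the element $\tfrac{1}{2}(h\overline{h} + \overline{h}h) + e\overline{f} + \overline{e}f + f\overline{e} + \overline{f}e$ from the Takiff $\mathfrak{sl}_2$ case) act by expressions depending linearly on $\lambda$ with $\mu$-dependent coefficients. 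Together these should yield a Harish-Chandra-type isomorphism identifying central characters at fixed $\mu$ with $W_\mu$-dot-orbits in $\lambda$, where $W_\mu$ is the Weyl group of the Levi $\g^\mu$; since $W_\mu \bullet \lambda$ differs from $\lambda$ only by elements of $\Z\Phi_\mu$, the assumption $\lambda - \lambda' \notin \Z\Phi_\mu$ then guarantees $\chi_1 \neq \chi_2$. The main obstacle is pinning down $\pi(Z(\g_\epsilon))$ precisely and translating the integral condition $\lambda - \lambda' \notin \Z\Phi_\mu$ (rather than the weaker condition involving $\Z\Phi$) into a genuine inequality of central characters.

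\textbf{Paragraph 3.} The filtration statement follows from the same ingredients. Given indecomposable $M$ with a filtration whose quotients have weights $(\lambda_i, \mu_i)$: Corollary \ref{calOdecomposition} places $M$ in a single $\calO_\epsilon^\mu$, which forces $\mu_i = \mu$ for all $i$. For any $z \in Z(\g_\epsilon)$, the scalar actions $c_1, \dots, c_n$ of $z$ on the successive quotients give $\prod_{i=1}^n (z - c_i) \cdot M = 0$, and the resulting generalised eigenspace decomposition $M = \bigoplus_c M_{(z,c)}$ is a direct sum of submodules by centrality. Indecomposability forces all $c_i$ to coincide for each $z$, and varying $z$ shows all $(\lambda_i, \mu)$ share a common central character. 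The separation result from Paragraph 2 then yields $\lambda_i - \lambda_j \in \Z\Phi_\mu$, completing the argument.
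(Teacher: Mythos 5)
Your outline matches the paper's architecture — the $\mu\neq\mu'$ case via the decomposition $\calO_\epsilon=\bigoplus_\nu\calO_\epsilon^\nu$, the $\lambda$-condition via central characters of highest weight modules, and the filtration statement as a corollary — but the technical heart of the argument is missing, and you say so yourself ("should yield", "the main obstacle is"). The paper's entire Section 4.1 exists to close exactly this gap: it takes the generators $D(z_i)$ of $S(\g_\epsilon)^{\g_\epsilon}$ from Takiff's theorem (your "mixed Casimir-type elements" are the quadratic case), symmetrises them into $Z(\g_\epsilon)$, and proves by an explicit PBW computation that the Harish--Chandra projection $\pi'\omega D(z_i)$ agrees with $\omega D\pi(z_i)$ up to a constant in $S(\overline{\h})$; evaluating at $(\lambda,\mu)$ identifies the resulting character with the differential $d_\mu\phi$ of the Chevalley quotient map $\h^*\to\h^*/W$. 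Richardson's theorem gives $\rank(d_\mu\phi)=\dimension(\mathfrak{z}(\g^\mu))$, and the containment $\C\Phi_\mu\subseteq\ker(d_\mu\phi)$ is exhibited directly via the highest weight submodules $M_{\lambda-\alpha,\mu}\subseteq M_{\lambda,\mu}$ generated by $\overline{f}_\alpha\otimes 1_{\lambda,\mu}$ for $\alpha\in\Delta_\mu$; a dimension count then yields $\ker(d_\mu\phi)=\C\Phi_\mu$. None of this is a formality, and without it your "distinguishing $z$" does not exist on paper. (Note also that the paper only carries this out when $\g^\mu$ is a standard Levi, deferring general $\mu$ to the twisting-functor equivalences.)

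There is a second, smaller gap: your separation target "$\lambda-\lambda'\notin\Z\Phi_\mu\Rightarrow\chi_1\neq\chi_2$" is not achievable by central characters alone, since they can only see the complex span $\C\Phi_\mu$ (indeed even in BGG category $\calO$, non-integrally-linked weights in the same $W\bullet$-orbit share a central character). The paper gets $\lambda-\lambda'\in\C\Phi_\mu$ from the centre and then upgrades to $\Z\Phi_\mu=\C\Phi_\mu\cap\Z\Phi$ by the elementary observation that $\bigoplus_{\nu-\nu'\in\Z\Phi}M^{\nu'}$ is a submodule, so that $\Ext$ already vanishes unless $\lambda-\lambda'\in\Z\Phi$. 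You gesture at this integrality issue but do not supply the argument; your Paragraph 3 inherits the same omission. Your reduction of the filtration statement and your splitting arguments in Paragraphs 1 and 3 are otherwise sound and consistent with the paper's (the paper deduces the filtration claim from the short exact sequences $0\to M_{i+1}/M_i\to M_{i+1}/M_{i-1}\to M_i/M_{i-1}\to 0$, which is equivalent to your eigenspace argument).
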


We devote the rest of this section to the proof of this theorem in the case where $\mu$ satisfies the hypotheses of Theorem \ref{maintheorem}. This is the only case we require to prove Theorem \ref{maintheorem}; however Theorem \ref{vermaexttheorem} will in fact hold for any $\mu$ as a consequence of Theorem \ref{twistingtheoremshort}. We first recall that if $\mu \neq \mu'$ then by Corollary \ref{calOdecomposition}, we have that $M$ and $N$ lie in different $\calO_\epsilon^\mu$, so we restrict our attention to the case $\mu = \mu'$. We wish to use the central characters of the Verma modules $M_{\lambda, \mu}$. To do this, we first need some information about the centre $Z(\g_\epsilon)$ of $U(\g_\epsilon)$. The following results, respectively \cite[Theorem 4.1]{T} and \cite[Corollary 2.4.11]{D}, can be used to prove certain elements of $U(\g_\epsilon)$ lie in the centre $Z(\g_\epsilon)$:

\begin{theorem}
\label{takiffcentre}
Let $\{z_1, z_2, \dots , z_n\}$ be a set of algebraically independent homogeneous generators of $S(\g)^{\g}$. Define maps $\iota, D: S(\g) \rightarrow S(\g_\epsilon)$ by letting $\iota$ be the inclusion $S(\g) \hookrightarrow S(\overline{\g}) \subseteq S(\g_\epsilon)$ and letting
\[D(z) = \sum_{x_i \in B} \frac{\partial \, \overline{z}}{\partial \, \overline{x_i}} x_i\]
where $B$ is a basis for $\g$.

Then $\{\iota(z_1), \iota(z_2), \dots , \iota(z_n), D(z_1), D(z_2), \dots , D(z_n)\}$ is a set of algebraically independent generators for $S(\g_\epsilon)^{\g_\epsilon}$.
\end{theorem}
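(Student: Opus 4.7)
The plan is to verify in three stages that the proposed elements are $\g_\epsilon$-invariant, algebraically independent, and generate all of $S(\g_\epsilon)^{\g_\epsilon}$. It is convenient to work under the identification $S(\g_\epsilon) = \C[\g_\epsilon^*]$: via the pairing $\langle x + \overline{y}, \xi_0 + \xi_1 \rangle = \xi_0(x) + \xi_1(y)$ one identifies $\g_\epsilon^* \cong \g^* \oplus \g^*$, and a short computation gives the coadjoint action as $x \cdot (\xi_0, \xi_1) = (\ad^*_x \xi_0, \ad^*_x \xi_1)$ and $\overline{y} \cdot (\xi_0, \xi_1) = (\ad^*_y \xi_1, 0)$. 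Under this identification $\iota(z)(\xi_0, \xi_1) = z(\xi_1)$ and $D(z)(\xi_0, \xi_1) = dz|_{\xi_1}(\xi_0)$, so the $\g$-invariance of each follows immediately from the $\g$-invariance of $z$, while the $\overline{y}$-invariance of $D(z)$ reduces to the infinitesimal identity $dz|_{\xi_1}(\ad^*_y \xi_1) = 0$, which is again just the infinitesimal $\g$-invariance of $z$ evaluated at $\xi_1$.

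For algebraic independence I would apply the Jacobian criterion at a generic point. Since the $\iota(z_i)$ depend only on $\xi_1$ while each $D(z_i)$ is linear in $\xi_0$ with coefficient $dz_i|_{\xi_1}$, choosing $\xi_1$ regular semisimple makes the covectors $dz_1|_{\xi_1}, \ldots, dz_n|_{\xi_1}$ linearly independent (a classical fact for reductive $\g$, equivalent to smoothness of the adjoint quotient on the regular locus). It follows that the Jacobian of the morphism $(\iota(z_1), \ldots, D(z_n)): \g_\epsilon^* \to \mathbb{A}^{2n}$ has rank $2n$ at such a point, giving algebraic independence.

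For generation I would first compute the transcendence degree of $S(\g_\epsilon)^{\g_\epsilon}$ by a generic orbit dimension calculation: the stabiliser of $(\xi_0, \xi_1)$ with $\xi_1$ regular semisimple consists of $x + \overline{y}$ satisfying $\ad^*_x \xi_1 = 0$ and $\ad^*_x \xi_0 + \ad^*_y \xi_1 = 0$, which forces $x, y \in \g^{\xi_1} = \h$, yielding a $2n$-dimensional stabiliser and hence transcendence degree $2n$, matching the subring $\C[\iota(z_i), D(z_i)]$. Upgrading this to equality of rings can be done via the geometric identification $\g_\epsilon^* \cong T\g^*$, under which the $\g_\epsilon$-action is the natural lift of the coadjoint $G$-action to the tangent bundle together with fibrewise translation along $G$-orbit tangent directions; this produces a categorical quotient $\cong T(\g^*/\!/G) = T\mathbb{A}^n = \mathbb{A}^{2n}$ whose coordinate functions pull back to precisely the $\iota(z_i)$ and $D(z_i)$. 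The main obstacle is making this last step rigorous, since $\g_\epsilon$ is not reductive (it has the abelian nilpotent ideal $\overline{\g}$) so classical GIT does not apply verbatim; an alternative strategy is to deform via the family $\g \otimes \C[\epsilon]/(\epsilon^2 - t)$, which equals $\g_\epsilon$ at $t = 0$ and $\g \oplus \g$ for $t \neq 0$, and transport the known invariant ring $S(\g)^\g \otimes S(\g)^\g$ to $t = 0$ by flatness, identifying its specialisations with $\iota(z_i)$ and $D(z_i)$.
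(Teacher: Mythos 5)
First, note that the paper does not prove this statement at all: it is quoted as Takiff's theorem, \cite[Theorem 4.1]{T}, so there is no in-paper proof to compare against and your attempt must be judged on its own. Your first two stages are sound. The identification $\g_\epsilon^* \cong \g^* \oplus \g^*$, the coadjoint formulas, the descriptions $\iota(z)(\xi_0,\xi_1) = z(\xi_1)$ and $D(z)(\xi_0,\xi_1) = dz|_{\xi_1}(\xi_0)$, the resulting invariance check, and the Jacobian-rank argument at a point with $\xi_1$ regular (using Kostant's theorem that $dz_1,\dots,dz_n$ are independent on the regular locus) are all correct. One small slip: the generic stabiliser is not $\{x+\overline{y} : x,y\in\h\}$; for each $x\in\h$ the admissible $y$ form an affine coset of $\h$ solving $\ad^*_y\xi_1 = -\ad^*_x\xi_0$ (a solution always exists since $\ad^*_x\xi_0$ lies in the image of $\ad^*_{(-)}\xi_1$). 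The dimension is still $2n$, which is all you use.

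The genuine gap is the generation step, which you have correctly located but not closed, and neither of your two proposed routes repairs it. Equality of transcendence degrees does not give equality of rings ($\C[x^2]\subsetneq\C[x]$), and identifying $\C[\g_\epsilon^*]^{\g_\epsilon}$ with the coordinate ring of $T(\g^*/\!/G)$ is essentially the content of the theorem rather than an available tool, since $\g_\epsilon$ is not reductive and no a priori structure theory of its quotient is on hand. The fallback via the family $\g\otimes\C[t][\epsilon]/(\epsilon^2-t)$ fails for a concrete reason: in each degree $d$ the invariants are the kernel of a family of linear maps, so their dimension is only \emph{upper} semicontinuous at $t=0$; flat degeneration therefore yields $\dimension S^d(\g_\epsilon)^{\g_\epsilon} \geq \dimension S^d(\g\oplus\g)^{\g\oplus\g}$, which is exactly the inequality you already have from the explicit subring $\C[\iota(z_i),D(z_i)]$ and the opposite of the one you need. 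Proving that no extra invariants appear at $t=0$ is equivalent to the theorem. The standard way to finish (in Takiff's original argument and its generalisations to truncated current algebras) is to restrict an arbitrary invariant to the open set where $\xi_1$ is regular, use a Kostant section there to write it as a polynomial in the proposed generators with a priori only rational coefficients, and then use that the complement of this open set has codimension at least two in $\g_\epsilon^*$ to conclude those coefficients are polynomials. Some argument of this kind is unavoidable and is missing from your sketch.
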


We will always apply this Lemma with $B = \{e_\alpha, f_\alpha : \alpha \in \Phi^+\} \cup \{h_\alpha : \alpha \in \Delta\}$

\begin{lemma}
Let $\mathfrak{a}$ be a finite dimensional Lie algebra and let $\omega: S(\mathfrak{a}) \rightarrow U(\mathfrak{a})$ be given by $\omega(x_1 x_2 \dots x_k) = \frac{1}{k!} \sum_{\sigma \in S_k} x_{\sigma(1)} x_{\sigma(2)} \dots x_{\sigma(k)}$ for $x_1, x_2, \dots x_k \in \mathfrak{a}$. Then $\omega$ restricts to a bijection between $S(\mathfrak{a})^{\mathfrak{a}}$ and $Z(\mathfrak{a})$.
\end{lemma}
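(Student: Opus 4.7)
The plan is to prove the lemma by upgrading $\omega$ to an $\mathfrak{a}$-module isomorphism, where $\mathfrak{a}$ acts on both $S(\mathfrak{a})$ and $U(\mathfrak{a})$ by its adjoint action; the bijection on invariants will then be immediate once we identify $U(\mathfrak{a})^{\mathfrak{a}} = Z(\mathfrak{a})$.

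First, I would recall that $\omega$ is a vector space isomorphism: by the PBW theorem, a basis of $U(\mathfrak{a})$ is given by the increasing ordered monomials in a fixed basis of $\mathfrak{a}$, and $\omega$ sends the corresponding monomials in $S(\mathfrak{a})$ to symmetrized products that form another basis (this can be checked by filtering by degree and observing that the induced map on $\operatorname{gr} U(\mathfrak{a}) \cong S(\mathfrak{a})$ is the identity). Next, the adjoint action of $\mathfrak{a}$ on itself extends uniquely to derivations on both algebras: on $S(\mathfrak{a})$ as the unique derivation extending $[x, -] : \mathfrak{a} \to \mathfrak{a}$, and on $U(\mathfrak{a})$ via $\operatorname{ad}(x)(u) = xu - ux$, which is a derivation because the commutator satisfies the Leibniz rule.

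The heart of the argument is to show $\omega \circ \operatorname{ad}(x) = \operatorname{ad}(x) \circ \omega$ for all $x \in \mathfrak{a}$. Applied to a monomial $x_1 \cdots x_k \in S(\mathfrak{a})$, the right hand side is
\[
\operatorname{ad}(x)\omega(x_1 \cdots x_k) = \frac{1}{k!} \sum_{\sigma \in S_k} \sum_{i=1}^k x_{\sigma(1)} \cdots [x, x_{\sigma(i)}] \cdots x_{\sigma(k)}
\]
by the derivation property of $\operatorname{ad}(x)$ on $U(\mathfrak{a})$. The left hand side equals
\[
\omega\!\left( \sum_{j=1}^k x_1 \cdots [x, x_j] \cdots x_k \right) = \sum_{j=1}^k \frac{1}{k!} \sum_{\tau \in S_k} y^{(j)}_{\tau(1)} \cdots y^{(j)}_{\tau(k)}
\]
where $y^{(j)}_j = [x, x_j]$ and $y^{(j)}_l = x_l$ for $l \neq j$. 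Reindexing the first sum by setting $j = \sigma(i)$ and noting that, for each fixed $j$, the pairs $(\sigma, i)$ with $\sigma(i) = j$ biject with permutations of $\{1, \dots, k\}$ realized via the symbol $[x, x_j]$ in an arbitrary position, shows the two expressions coincide. This is a purely combinatorial reindexing and is the only step that requires any care.

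Finally, $u \in Z(\mathfrak{a})$ iff $[x, u] = 0$ for all $x \in \mathfrak{a}$, so $Z(\mathfrak{a}) = U(\mathfrak{a})^{\mathfrak{a}}$; and by construction $S(\mathfrak{a})^{\mathfrak{a}}$ is the space of invariants for the adjoint action on $S(\mathfrak{a})$. Since $\omega$ is a bijective $\mathfrak{a}$-module map, it restricts to a bijection $S(\mathfrak{a})^{\mathfrak{a}} \xrightarrow{\sim} Z(\mathfrak{a})$, proving the lemma. The main obstacle in a fully written proof is just the combinatorial intertwining computation; everything else is formal.
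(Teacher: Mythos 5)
Your proof is correct and complete: the symmetrization map is a filtered linear isomorphism by PBW, your reindexing argument establishes its $\ad$-equivariance for the adjoint actions, and $Z(\mathfrak{a}) = U(\mathfrak{a})^{\mathfrak{a}}$, so the bijection on invariants follows immediately. The paper offers no proof of this lemma, quoting it from Dixmier as \cite[Corollary 2.4.11]{D}; your argument is essentially the standard one found there, so there is nothing to fill in beyond writing out the combinatorial reindexing you have already outlined.
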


The map $\omega$ is not an algebra isomorphism, but (using the notation of Theorem \ref{takiffcentre}) we do have that $\omega(D(z_i)) \in Z(\g_\epsilon)$ for each $1 \leq i \leq n$.
We wish to understand how these elements act on a highest weight module $M$ of weight $(\lambda, \mu)$, and in particular how they act on a highest weight vector of weight $(\lambda, \mu)$.

We now let $\pi : S(\g)^G \rightarrow S(\h)^W$ be the Chevalley restriction map, and define another map $\pi': U(\g_\epsilon)^{\h} \rightarrow S(\h_\epsilon)$ whose restriction to $Z(\g_\epsilon)$ is analogous to the Chevalley restriction map in the following manner. By the PBW theorem, there is a decomposition $U(\g_\epsilon)^{\h} = U(\h_\epsilon) \oplus I$, where $I = U(\g_\epsilon)^{\h} \cap (U(\g_\epsilon) \n_\epsilon)$. This $I$ is clearly a left ideal of $U(\g_\epsilon)^{\h}$. To see it is a right ideal, we observe that $I$ is also equal to $U(\g_\epsilon)^{\h} \cap (\n_\epsilon^- U(\g_\epsilon))$. Let $\pi': U(\g_\epsilon)^{\h} \rightarrow U(\h_\epsilon) = S(\h_\epsilon)$ be the projection along this decomposition, which is an algebra homomorphism since $I$ is a two sided ideal. For any $z \in Z(\g_\epsilon) \subseteq U(\g_\epsilon)^{\h}$, we have that $z$ acts on a highest weight vector of weight $(\lambda, \mu)$, and hence on all of $M_{\lambda, \mu}$, by $(\pi'(z))(\lambda, \mu)$ where $S(\h_\epsilon)$ is identified with $\C[\h_\epsilon^*]$.

For $\mu \in \h^*$, we define two maps $\xi_\mu$ and $\psi_\mu$ by:
\begin{align*}
\xi_\mu : \h^*&\rightarrow \C^n \\
\lambda &\mapsto (\pi'\omega D(z_i)(\lambda, \mu))_{i = 1, \dots , n} \\
\psi_\mu : \h^* &\rightarrow \C^n \\
\lambda &\mapsto (\omega D\pi(z_i)(\lambda, \mu))_{i = 1, \dots , n}
\end{align*}
We also let $\phi: \h^* \rightarrow \h^*/W \cong \C^n$ be the quotient map, which is the map of algebraic varieties induced by the inclusion $S(\h)^W \hookrightarrow S(\h)$. Here the identification $\h^*/W \cong \C^n$ is induced by the Chevally isomorphism $S(\h)^W \cong \C[\pi(z_1), \pi(z_2), \dots, \pi(z_n)]$. We now state and prove the following Lemma, which we use together with Corollary \ref{calOdecomposition} to prove Theorem \ref{vermaexttheorem}:

\begin{lemma}
\label{centralcharaterlemma}
With notation as above:

\begin{enumerate}
\item[(a)] Let $M$ and $N$ be highest weight modules of weight $(\lambda, \mu)$ and $(\lambda', \mu)$ respectively. Then $M$ and $N$ have different central characters unless $\xi_\mu(\lambda) = \xi_\mu(\lambda')$. 

\item[(b)] For any fixed $\mu \in \h^*$, there is some $\mathbf{c}_\mu \in \C^n$ such that $\xi_\mu(\lambda) = \psi_\mu(\lambda) + \mathbf{c}_\mu$. Hence $\xi_\mu(\lambda) = \xi_\mu(\lambda')$ if and only if $\psi_\mu(\lambda) = \psi_\mu(\lambda')$.

\item[(c)] $d_\mu \phi = \psi_\mu$ for any $\mu \in \h^*$.

\item[(d)] $\rank(d_\mu \phi) = \dimension(\mathfrak{z}(\g^\mu))$.

\item[(e)] If $\g^\mu$ is a Levi factor of a standard parabolic subalgebra $\p \subseteq \g$, we have $\ker(\psi_\mu) = \ker(d_\mu \phi) = \C\Phi_\mu$, so by part (b), $\xi_\mu(\lambda) = \xi_\mu(\lambda')$ holds only if $\lambda - \lambda' \in \C\Phi_\mu$.
\end{enumerate}
\end{lemma}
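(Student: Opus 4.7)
Parts (a) and (c) are essentially immediate. For (a), any $z \in Z(\g_\epsilon)$ acts on a highest weight module of weight $(\lambda,\mu)$ by the scalar $\pi'(z)(\lambda,\mu)$, so taking $z = \omega D(z_i)$ for each $i$ and using that $M$ and $N$ share a central character yields $\xi_\mu(\lambda) = \xi_\mu(\lambda')$. For (c), both sides are direct computations: $d_\mu \phi(\lambda)_i = d_\mu \pi(z_i)(\lambda) = \sum_j \frac{\partial \pi(z_i)}{\partial h_{\alpha_j}}(\mu)\,\lambda(h_{\alpha_j})$, while $D\pi(z_i) = \sum_j \frac{\partial \overline{\pi(z_i)}}{\partial \overline{h_{\alpha_j}}}\, h_{\alpha_j}$ lies in $S(\h_\epsilon)$, on which $\omega$ acts as the identity since $\h_\epsilon$ is abelian, and evaluates at $(\lambda,\mu)$ to the same expression.

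The crux of the lemma is (b). The key observation is that $D(z_i) \in S(\g_\epsilon)$ has bidegree $(1, d_i - 1)$ in the unbarred/barred grading, and that the brackets $[\overline{x},\overline{y}]=0$, $[\overline{x},y]=\overline{[x,y]}\in\overline{\g}$, $[x,y]\in\g$ each weakly decrease the unbarred degree. Consequently every PBW monomial of $\omega D(z_i)\in U(\g_\epsilon)$ has at most one unbarred factor. When we apply $\pi'$, monomials whose unique unbarred factor lies in $\n$ (resp.\ $\n^-$) die: pushing that factor to the right (resp.\ left) exhibits the monomial in $U(\g_\epsilon)\n_\epsilon$ modulo correction terms in $S(\overline{\g})$, which only contribute to the unbarred-degree-$0$ part. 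The surviving contributions split as (i) monomials with unbarred factor $h_\alpha\in\h$, whose coefficients restricted to $\overline{\h}$ equal $\frac{\partial \overline{\pi(z_i)}}{\partial \overline{h_\alpha}}$ (since $\overline{z_i}|_{\overline{\h}}=\overline{\pi(z_i)}$), summing precisely to $D(\pi(z_i))$; and (ii) pure barred monomials in $S(\overline{\h})$, contributing a polynomial $c_i(\overline{h})$ independent of the unbarred $h_\alpha$'s. Evaluating at $(\lambda,\mu)$ then yields $\xi_\mu(\lambda) = \psi_\mu(\lambda) + \mathbf{c}_\mu$ with $\mathbf{c}_\mu=(c_i(\mu))_i$.

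For (d), we pass to the dual: $\rank(d_\mu\phi) = \dimension(\im(d_\mu\phi)^*)$, and $\im(d_\mu\phi)^*$ is spanned by the gradients $\{d_\mu\pi(z_i)\}_{i=1}^n\subseteq \h$. Since $\pi(z_i)$ is $W$-invariant and $\mu$ is fixed by its stabilizer $W_\mu$, each $d_\mu\pi(z_i)$ lies in $\h^{W_\mu}$. By a theorem of Steinberg, $W_\mu$ is generated by the reflections $\{s_\alpha : \alpha\in\Phi_\mu\}$, so $\h^{W_\mu} = \bigcap_{\alpha\in\Phi_\mu}\ker\alpha = \mathfrak{z}(\g^\mu)$. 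Equality of the span with $\h^{W_\mu}$ follows from applying Chevalley's theorem to the reflection subgroup $W_\mu$ via the local factorisation $\h^*\to\h^*/W_\mu\to\h^*/W$ at $\mu$ (made particularly clean in the standard-parabolic case, where $\Phi_\mu$ has simple roots drawn from $\Delta$). Part (e) then follows by linear duality: $\ker(d_\mu\phi)$ equals the annihilator in $\h^*$ of $\mathfrak{z}(\g^\mu)=\bigcap_{\alpha\in\Phi_\mu}\ker\alpha$, which is $\C\Phi_\mu$, and (c) identifies this with $\ker(\psi_\mu)$.

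The main obstacle will be the careful PBW bookkeeping in part (b), in particular verifying that reordering unbarred $\h$-factors past the $\overline{\g}$-valued corrections from $[h_\alpha, \overline{y}] = \overline{[h_\alpha, y]}$ does not introduce spurious $h$-dependence in $c_i(\overline{h})$. Everything else reduces to standard Chevalley-restriction identities and reflection-group facts.
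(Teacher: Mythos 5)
Your proposal is correct, and for parts (a)--(c) it follows essentially the same route as the paper: (a) is the observation that $z$ acts by $\pi'(z)(\lambda,\mu)$, (c) is the same chain-rule computation, and your (b) is a repackaging of the paper's argument --- the paper verifies $\pi'\omega D(x) - \omega D\pi(x) \in S(\overline{\h})$ by an explicit four-case computation on PBW monomials $x$ (pure $\h$-monomials; $f_\beta h\cdots h\, e_\beta$; several $f$'s and one $e$; several $e$'s), whereas you organise the same fact by the bidegree observation that $D(z_i)$ has exactly one unbarred factor per monomial and that reordering can only push that factor into the barred part. Your version is cleaner conceptually; the paper's case (2) is worth noting because it exhibits the surviving correction $\overline{h_{\alpha_1}\cdots h_{\alpha_k}h_\beta}$ explicitly, confirming that $\mathbf{c}_\mu$ is genuinely nonzero in general, which your ``pure barred corrections'' term correctly absorbs. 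Where you genuinely diverge is in (d) and (e). The paper proves (d) by citing Richardson's result that $\rank(d_\mu\xi) = \dimension(\mathfrak{z}(\g^\mu))$ for the adjoint quotient $\xi:\g\to\C^n$ and transporting it to $\h$ via a commutative diagram; you instead argue inside the Weyl group, using Steinberg's theorem that the stabiliser $W_\mu$ is generated by the reflections $s_\alpha$, $\alpha\in\Phi_\mu$, together with Chevalley theory for $W_\mu$ and the \'etaleness of $\h^*/W_\mu\to\h^*/W$ at $[\mu]$. For (e) the paper only proves the inclusion $\C\Phi_\mu\subseteq\ker(\psi_\mu)$, and does so representation-theoretically (for $\alpha$ simple in $\Phi_\mu$, the vector $\overline{f}_\alpha\otimes 1_{\lambda,\mu}$ generates a copy of $M_{\lambda-\alpha,\mu}$ inside $M_{\lambda,\mu}$, forcing equal central characters), then concludes equality by the dimension count from (d) --- this is where the standard-parabolic hypothesis is used, to ensure the relevant $\alpha$ are simple in $\Phi$. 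You get (e) for free from your stronger form of (d) by linear duality, since you identify $\im((d_\mu\phi)^*)$ with $\mathfrak{z}(\g^\mu)=\bigcap_{\alpha\in\Phi_\mu}\ker\alpha$ exactly, whose annihilator is $\C\Phi_\mu$. Both routes are sound; yours trades the external citation to Richardson and the Verma-submodule construction for standard reflection-group facts, at the cost of needing the Steinberg/Chevalley machinery to be spelled out.
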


\begin{proof}
Part (a) follows from the remark that $z \in Z(\g_\epsilon)$ acts on $M_{\lambda, \mu}$ by $(\pi'(z))(\lambda, \mu)$.

For part (b), we first view $\pi' \circ \omega D$ and $\omega D \circ \pi$ as maps from $S(\g)^\h$ to $S(\h_\epsilon)$. Let $x \in S(\g)^\h$ be a monomial in the standard PBW ordering; we then consider four cases:

(1) $x \in S(\h)$. In this case, $\pi(x) = x$ and since $D(x) \in S(\h_\epsilon)$, we have $\omega D \pi(x) = \omega D(x) = \pi' \omega D(x)$.

(2) $x$ is of the form $f_\beta h_{\alpha_1}h_{\alpha_2} \dots h_{\alpha_k} e_\beta$, in which case:
\begin{align*}
\omega D(\pi(x)) = &\omega D(0) = 0\\
\pi'(\omega D(x)) = &\pi'(\omega(\overline{f_\beta h_{\alpha_1}h_{\alpha_2} \dots h_{\alpha_k}} e_\beta + \overline{h_{\alpha_1}h_{\alpha_2} \dots h_{\alpha_k} e_\beta} f_\beta\\
&+ \sum_j \overline{f_\beta h_{\alpha_1}h_{\alpha_2} \dots h_{\alpha_{j-1}} h_{\alpha{j+1}} \dots h_{\alpha_k} e_\beta} h_{\alpha_j}))\\
= &\frac{1}{2}\overline{h_{\alpha_1}h_{\alpha_2} \dots h_{\alpha_k} h_\beta} + \frac{1}{2}\overline{h_{\alpha_1}h_{\alpha_2} \dots h_{\alpha_k} h_\beta} + 0 = \overline{h_{\alpha_1}h_{\alpha_2} \dots h_{\alpha_k} h_\beta}.
\end{align*}

To show that second equality, we first observe that $\pi'$ and $\omega$ are linear. Considering the first term, we see that for any $0 \leq i \leq j \leq k$, we have:
\begin{align*}
\pi'(\overline{h_{\alpha_1} \dots h_{\alpha_i}} e_\beta \overline{h_{\alpha_{i+1}} \dots h_{\alpha_j} f_\beta h_{\alpha_{j+1}} \dots h_{\alpha_k}}) = &\pi'(\overline{h_{\alpha_1} \dots h_{\alpha_i}} e_\beta \overline{h_{\alpha_{i+1}} \dots h_{\alpha_k} f_\beta}) \\
= &\pi'(\overline{h_{\alpha_1} \dots h_{\alpha_j} h_{\alpha_{j+1}} \dots h_{\alpha_k}} [e_\alpha, \fbar_\alpha] \\
&+ \sum_{l \geq i} \overline{h_{\alpha_1} \cdots h_{\alpha_l}} [e_\alpha, \overline{h}_{\alpha_{l+1}}] \overline{h_{\alpha_{l+2}} \dots h_{\alpha_k} f_\beta} \\
&+ \overline{f_\beta h_{\alpha_1} h_{\alpha_2} \dots h_{\alpha_k}} e_\beta) \\
= &\pi'(\overline{h_{\alpha_1} h_{\alpha_2} \dots h_{\alpha_k} h_\beta} \\
&+ \sum_{l \geq i} \beta(h_{\alpha_l}) \overline{f_\beta h_{\alpha_1} h_{\alpha_2} \dots h_{\alpha_k} e_\beta} \\
&+ \overline{f_\beta h_{\alpha_1} h_{\alpha_2} \dots h_{\alpha_k}} e_\beta) \\
= &\overline{h_{\alpha_1} h_{\alpha_2} \dots h_{\alpha_k} h_\beta}
\end{align*}
and
\begin{align*}
\pi'(\overline{h_{\alpha_1} \dots h_{\alpha_i} f_\beta h_{\alpha_{i+1}} \dots h_{\alpha_j}} e_\beta \overline{h_{\alpha_{j+1}} \dots h_{\alpha_k}}) = &\pi'(\overline{f_\beta h_{\alpha_1} \dots h_{\alpha_j}} e_\beta \overline{h_{\alpha_{j+1}} \dots h_{\alpha_k}})\\
= &\pi'(\sum_{l \geq j} \overline{f_\beta h_{\alpha_1} \dots h_{\alpha_l}} [e_\beta, \overline{h}_{\alpha_{l+1}}]\overline{h_{\alpha_{l+2}} \dots h_{\alpha_k}} \\
&+ \overline{f_\beta h_{\alpha_1} h_{\alpha_2} \dots h_{\alpha_k}} e_\beta)\\
= &\pi'(\sum_{l \geq j} \beta(h_{\alpha_l}) \overline{f_\beta h_{\alpha_1} h_{\alpha_2} \dots h_{\alpha_k} e_\beta} \\
&+ \overline{f_\beta h_{\alpha_1} h_{\alpha_2} \dots h_{\alpha_k}} e_\beta)\\
= &0.    
\end{align*}
Now, $\omega(\overline{f_\beta h_{\alpha_1}h_{\alpha_2} \dots h_{\alpha_k}} e_\beta)$ is a sum of $(k+2)!$ terms. Half of these terms are of the form 
\[\frac{1}{(k+2)!}\overline{h_{\alpha_1} \dots h_{\alpha_i}} e_\beta \overline{h_{\alpha_{i+1}} \dots h_{\alpha_j} f_\beta h_{\alpha_{j+1}} \dots h_{\alpha_k}}\]
and half are of the form
\[\frac{1}{(k+2)!}\overline{h_{\alpha_1} \dots h_{\alpha_i} f_\beta h_{\alpha_{i+1}} \dots h_{\alpha_j}} e_\beta \overline{h_{\alpha_{j+1}} \dots h_{\alpha_k}}\]
so $\pi'(\omega(\overline{f_\beta h_{\alpha_1}h_{\alpha_2} \dots h_{\alpha_k}} e_\beta)) = \frac{1}{2}\overline{h_{\alpha_1} h_{\alpha_2} \dots h_{\alpha_k} h_\beta}$. Similar results hold for the other terms $\pi'(\omega(\overline{h_{\alpha_1}h_{\alpha_2} \dots h_{\alpha_k} e_\beta} f_\beta))$ and $\pi'(\omega(\overline{f_\beta h_{\alpha_1}h_{\alpha_2} \dots h_{\alpha_{j-1}} h_{\alpha{j+1}} \dots h_{\alpha_k} e_\beta} h_{\alpha_j}))$.

(3) $x$ is of the form $f_{\alpha_1} \dots f_{\alpha_m}h_{\beta_1} \dots h_{\beta_k} e_\gamma$ where $m>1$ and $\gamma = \sum \alpha_i$ (so in particular $\gamma > \alpha_i$ for all $i$). In this case:
\begin{align*}
\omega D(\pi(x)) = &\omega D(0) = 0\\
\pi'(\omega D(x)) = &\pi'(\omega(\sum_i \overline{f_{\alpha_1} \dots f_{\alpha_{i-1}} f_{\alpha_{i+1}} \dots f_{\alpha_m} h_{\beta_1} \dots h_{\beta_k} e_\gamma} f_{\alpha_i} \\
&+ \sum_j \overline{f_{\alpha_1} \dots f_{\alpha_m}h_{\beta_1} \dots h_{\beta_{j-1}} h_{\beta{j+1}} \dots h_{\beta_k} e_\gamma} h_{\beta_j}\\
&+ \overline{f_{\alpha_1} \dots f_{\alpha_m}h_{\beta_1} \dots h_{\beta_k}} e_\gamma))\\
= &0 + 0 + 0
\end{align*}

(4) $x$ is of the form $f_{\alpha_1} \dots f_{\alpha_m}h_{\beta_1} \dots h_{\beta_k} e_{\gamma_1} \dots e_{\gamma_p}$, where $m \geq 1, p > 1$ and $\sum \alpha_i = \sum \gamma_k$. In this case:
\begin{align*}
\omega D(\pi(x)) = &\omega D(0) = 0\\
\pi'(\omega D(x)) = &\pi'(\omega (\sum_i \overline{f_{\alpha_1} \dots f_{\alpha_{i-1}} f_{\alpha_{i+1}} \dots f_{\alpha_m} h_{\beta_1} \dots h_{\beta_k} e_{\gamma_1} \dots e_{\gamma_p}} f_{\alpha_i}\\
&+ \sum_j \overline{f_{\alpha_1} \dots f_{\alpha_m}h_{\beta_1} \dots h_{\beta_{j-1}} h_{\beta{j+1}} \dots h_{\beta_k} e_{\gamma_1} \dots e_{\gamma_p}} h_{\beta_j}\\
&+ \sum_l \overline{f_{\alpha_1} \dots f_{\alpha_m}h_{\beta_1} \dots h_{\beta_k} e_{\gamma_1} \dots e_{\gamma_{l-1}} e_{\gamma_{l+1}} \dots e_{\gamma_p}} e_{\gamma_k}))\\
= &0 + 0 + 0
\end{align*}

In particular, $\omega D\pi(x) - \pi' \omega D(x) \in S(\overline{\h})$ for any monomial $x$ and hence for any $z \in S(\g)^G \subseteq S(\g)^\h$. Hence $\xi_\mu(\lambda) - \psi_\mu(\lambda)$ depends only on $\mu$, so for fixed $\mu$ we have that $\xi_\mu(\lambda) - \psi_\mu(\lambda)$ is a constant as required.

By considering the inclusion map $S(\h)^W \hookrightarrow S(\h)$ and identifying $S(\h)^W$ with $\C[\pi(z_1), \pi(z_2), \dots , \pi(z_n)]$, we can write $\phi: \h^* \rightarrow \h^*/W \cong \C^n$ in components as:
\[\phi(\lambda) = \begin{pmatrix} \pi(z_1)(\lambda) \\
\vdots \\
\pi(z_n)(\lambda)
\end{pmatrix}\]
Now let $\phi_i : \h^* \rightarrow \C$ be given by $\phi_i (\lambda) = \pi(z_i)(\lambda)$. Then we have the following equality, which is the key step in allowing us to understand the action of $Z(\g_\epsilon)$ in terms of the adjoint quotient map $\phi$: 
\[d_\mu \phi_i (\lambda)= \sum_{j = 1}^n \frac{\partial \phi_i}{\partial h_j}(\mu) h_j (\lambda) = D(\pi(z_i))(\lambda, \mu)\]
so $d_\mu \phi (\lambda) = \psi_\mu(\lambda)$ proving part (c).

To show part (d), we invoke a result (see \cite[Proposition 1.2]{R}) that states that if $\xi: \g \rightarrow \C^n$ is the adjoint quotient map, then $\rank(d_\mu \xi') = \dimension(\mathfrak{z}(\g^\mu))$. Now, consider the following diagrams:
\[
\begin{tikzcd}
                                          & S(\h) \arrow[rd, hook] &       &      & \h \arrow[ld, "\phi"'] &                                              \\
S(\h)^W \arrow[rr, hook] \arrow[ru, hook] &                        & S(\g) & \C^n &                        & \g \arrow[lu, "\zeta"'] \arrow[ll, "\xi"]
\end{tikzcd}
\]
where $\zeta: \g = \n^- \oplus \h \oplus \n \rightarrow \h$ is the projection along this decomposition, and we identify $\h$ and $\g$ with their duals $\h^*$ and $\g^*$ respectively using the isomorphism obtained from the Killing form. The diagram on the right commutes since it is the induced by the diagram on the left. In particular, $\rank(d_\mu \phi) = \rank(d_\mu \xi)$, completing the proof of (d).

Finally, to show (e) we let $\Delta_\mu = \{\alpha \in \Phi^+ \cap \Phi_\mu: \alpha \mbox{ is minimal in } \Phi^+ \cap \Phi_\mu\}$, which is a simple system for $\Phi_\mu$, and note that $\dimension(\C\Phi_\mu) = \dimension([\g^\mu, \g^\mu] \cap \h)$. Now consider $ M_{\lambda, \mu}$ and let $\alpha \in \Delta_\mu$. In the case $\mu$ satisfies the hypotheses of Theorem \ref{maintheorem}, $\overline{f_\alpha} \otimes 1_{\lambda, \mu} \in M_{\lambda, \mu}$ is highest weight of weight $(\lambda - \alpha, \mu)$ and generates a submodule of $M_{\lambda, \mu}$ isomorphic to $M_{\lambda - \alpha, \mu}$. Hence by parts (a) and (b), $\alpha \in \ker(\psi_\mu)$ for each $\alpha \in \Delta_\mu$, so by linearity, $\C\Phi_\mu \subseteq \ker(\psi_\mu)$. But by (d), $\dimension(\ker(\psi_\mu)) = \rank(\g) - \dimension(\mathfrak{z}(\g^\mu)) = \dimension([\g^\mu, \g^\mu] \cap \h) = \dimension(\C\Phi_\mu)$ so we must have that $\C\Phi_\mu = \ker(\psi_\mu)$. 
\end{proof}

\begin{proof}[Proof of Theorem \ref{vermaexttheorem}]
Suppose $N_1$ and $N_2$ are highest weight modules of weights $(\lambda, \mu)$ and $(\lambda', \mu')$ respectively. If $\Ext_{\calO_\epsilon}(N_1, N_2) \neq 0$, then by Corollary \ref{calOdecomposition} we must have $\mu = \mu'$. The central characters of $N_1$ and $N_2$ must be the same, so by Lemma \ref{centralcharaterlemma}(a) we must have $\xi_\mu(\lambda) = \xi_\mu(\lambda')$. By Lemma \ref{centralcharaterlemma}(e), we must then have $\lambda - \lambda' \in \C\Phi_\mu$. Observe that for any module $M \in \calO_\epsilon$ and $\nu \in \h^*$, the subspace $\bigoplus_{\nu - \nu' \in \Z\Phi} M^{\nu'}$ is in fact a submodule of $M$, and $M$ is a direct sum of such submodules. Hence $\lambda - \lambda' \in \C\Phi_\mu \cap \Z\Phi = \Z\Phi_\mu$ as required. Now, suppose $M \in \calO_\epsilon^\mu$ is indecomposable and $0 = M_0 \subseteq M_1 \subseteq \dots \subseteq M_n = M$ is a filtration such that each $M_i/M_{i-1}$ is a highest weight module of weight $(\lambda_i, \mu_i)$. Then for each $0 \leq i \leq {n-1}$ there is a short exact sequence $0 \rightarrow M_{i+1}/M_i \rightarrow M_{i+1}/M_{i-1} \rightarrow M_i/M_{i-1} \rightarrow 0$, so the first part of the theorem implies the desired result.
\end{proof}

\subsection{Exactness of the restriction functor}

We now fix a standard parabolic subalgebra $\p$ of $\g$ with Levi decomposition $\p = \mathfrak{l} \oplus \mathfrak{r}$ and let $\mu \in \h^*$ be such that $\g^\mu = \mathfrak{l}$. We prove Theorem \ref{maintheorem} using Theorem \ref{vermaexttheorem} along with the following three lemmas.

\begin{lemma}
\label{highestweightmodinvariants}
Let $M$ be a highest weight module of weight $(\lambda, \mu)$. Then 
\[M^{\mathfrak{r}_\epsilon} = \bigoplus_{\lambda - \lambda' \in \C\Phi_\mu} M^{\lambda'}\]
\end{lemma}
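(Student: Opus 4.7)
The plan is to prove the two inclusions separately, in each case reducing to a highest weight vector by iterating the nilpotent action and then exploiting the central character analysis of Lemma \ref{centralcharaterlemma}. Note that $M \in \calO_\epsilon^\mu$ by Lemma \ref{simple0modules}, that $\mathfrak{r}$ is spanned by the root spaces $\g_\beta$ for $\beta \in \Phi^+ \setminus \Phi_\mu$, and that $[\mathfrak{n}_{\mathfrak{l},\epsilon}, \mathfrak{r}_\epsilon] \subseteq \mathfrak{r}_\epsilon$ since $\mathfrak{r}_\epsilon$ is an ideal of $\mathfrak{p}_\epsilon$.

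For the forward inclusion, I would first use semisimplicity of $\h$ to reduce to a weight vector $m \in M^{\lambda'} \cap M^{\mathfrak{r}_\epsilon}$. The relation $[\mathfrak{n}_{\mathfrak{l},\epsilon}, \mathfrak{r}_\epsilon] \subseteq \mathfrak{r}_\epsilon$ means that $M^{\mathfrak{r}_\epsilon}$ is preserved by $\mathfrak{n}_{\mathfrak{l},\epsilon}$, and because weights of $M$ are bounded above by Lemma \ref{fdweightspaces}, iterating applications of $\mathfrak{n}_{\mathfrak{l},\epsilon}$ to $m$ must terminate in a maximal vector $v \in M^{\mathfrak{r}_\epsilon}$ of some weight $\lambda''$ with $\lambda'' - \lambda' \in \Z\Phi_\mu$. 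Corollary \ref{maxhwcorollary} upgrades $v$ to a highest weight vector of weight $(\lambda'', \nu)$, and $\nu = \mu$ because $M \in \calO_\epsilon^\mu$. The highest weight submodule it generates sits inside $M$ and hence shares the same central character, so Lemma \ref{centralcharaterlemma}(a), (b), (e) forces $\lambda - \lambda'' \in \C\Phi_\mu$, whence $\lambda - \lambda' \in \C\Phi_\mu$.

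For the reverse inclusion, I would take $m \in M^{\lambda'}$ with $\lambda - \lambda' \in \C\Phi_\mu$ (automatically in $\Z\Phi_\mu$, since weights of $M$ sit in $\lambda - \Z\Phi$) and show $x \cdot m = 0$ for any weight vector $x \in \mathfrak{r}_\epsilon$ of weight $\beta \in \Phi^+ \setminus \Phi_\mu$. Supposing $x \cdot m \neq 0$, the same iteration argument starting from $x \cdot m$ produces a maximal vector of some weight $\lambda'''$ with $\lambda''' - \lambda' - \beta \in \Z_{\geq 0}\Phi^+$, and the central character argument again gives $\lambda - \lambda''' \in \Z\Phi_\mu$. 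Expanding in the simple root basis of $\g$, the element $\lambda''' - \lambda' - \beta$ then differs from $-\beta$ by an element of $\Z\Phi_\mu$, so its coefficient on any simple root of $\g$ outside $\Delta_\mu$ that appears in $\beta$ is strictly negative, contradicting membership in $\Z_{\geq 0}\Phi^+$.

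I expect the main obstacle to be the central character step: the hypothesis that $\g^\mu$ is a Levi of a standard parabolic enters precisely through Lemma \ref{centralcharaterlemma}(e), which identifies $\ker \psi_\mu$ with $\C\Phi_\mu$ and thereby converts equality of central characters into the desired condition on $\lambda - \lambda''$. Everything else is routine highest weight theory combined with a coefficient check in the simple root basis.
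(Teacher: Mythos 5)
Your argument is correct, and for the hard inclusion $M^{\mathfrak{r}_\epsilon} \subseteq \bigoplus_{\lambda - \lambda' \in \C\Phi_\mu} M^{\lambda'}$ it is exactly the paper's: raise a weight vector of $M^{\mathfrak{r}_\epsilon}$ to a maximal vector using the Levi part of $\n_\epsilon$ (legitimate because $\mathfrak{r}_\epsilon$ is an ideal of $\p_\epsilon$, so $M^{\mathfrak{r}_\epsilon}$ is a $\g^\mu_\epsilon$-submodule), upgrade via Corollary \ref{maxhwcorollary}, and invoke the central character analysis of Lemma \ref{centralcharaterlemma}(a), (b), (e). Where you diverge is the reverse inclusion. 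The paper disposes of it in one line: the weights of a highest weight module of weight $(\lambda,\mu)$ lie in $\lambda - \Z_{\geq 0}\Phi^+$, and for $\lambda - \lambda' \in \C\Phi_\mu$ and $\beta \in \Phi^+ \setminus \Phi_\mu$ the weight $\lambda' + \beta$ fails to satisfy $\lambda' + \beta \leq \lambda$ (it has a strictly positive coefficient, relative to $\lambda$, on a simple root outside $\Delta_\mu$), so $M^{\lambda'+\beta} = 0$ and $\mathfrak{r}_\epsilon$ annihilates $M^{\lambda'}$ for free. Your version reaches the same coefficient contradiction, but only after a second pass through maximal vectors and central characters; in fact your own intermediate facts already finish it, since $\lambda - \lambda''' \in \Z_{\geq 0}\Phi^+$ (weights of $M$ are bounded above by $\lambda$) and $\lambda''' - \lambda' - \beta \in \Z_{\geq 0}\Phi^+$ sum to $\lambda - \lambda' - \beta \in \Z_{\geq 0}\Phi^+$, which is the desired contradiction with no appeal to Lemma \ref{centralcharaterlemma}. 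So the central-character machinery (and with it the hypothesis that $\g^\mu$ is a standard Levi, via part (e)) is genuinely needed only for the forward inclusion; the reverse one is pure weight combinatorics, and the paper's shorter route makes that division of labour visible.
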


\begin{proof}
We first observe that since $\g^\mu$ is the Levi factor of a standard parabolic $\mathfrak{r} = \vspan\{e_\alpha: \alpha \in \Phi^+ \backslash \Phi_\mu\}$. Now, if $v \in M^{\lambda'}$ for some $\lambda'$ such that $\lambda - \lambda' \in \C\Phi_\mu$, then for $\alpha \in \Phi^+ \backslash \Phi_\mu$ we have $e_\alpha \cdot v, \overline{e_\alpha} \cdot v \in M^{\lambda' + \alpha}$. But since $ \lambda \ngeq \lambda' + \alpha$, $M^{\lambda' + \alpha} = 0$ and so $v \in M^{\mathfrak{r}_\epsilon}$. Hence we have that $M^{\mathfrak{r}_\epsilon} \supseteq \bigoplus_{\lambda - \lambda' \in \C\Phi_\mu} M^{\lambda'}$.

On the other hand, suppose $v \in M^{\mathfrak{r}_\epsilon}$ is of weight $\lambda'$. Then, since $\n_\epsilon \cap \mathfrak{l}_\epsilon$ acts locally finitely and $M^{\mathfrak{r}_\epsilon}$ is a $\g^\mu_\epsilon$ submodule, we can repeatedly apply elements $e_\alpha, \overline{e_\alpha}$ where $\alpha \in \Phi_\mu$ to find a maximal vector whose weight is in $\lambda' + \C\Phi_\mu$. Hence by Corollary \ref{maxhwcorollary} there is a highest weight vector in $M$ whose weight is in $\lambda' + \C\Phi_\mu$, which generates a highest weight submodule of $M$ that must have the same central character as $M$. But by Theorem \ref{vermaexttheorem}, this highest weight module has the same central character only if the weight of $v$ is in $\lambda + \C\Phi_\mu$, which implies $\lambda - \lambda' \in \C\Phi_\mu$ as required.
\end{proof}

\begin{lemma}
\label{invariantsexactsequence}
Let $0 \rightarrow L \overset{f}{\rightarrow} M \overset{g}{\rightarrow} N \rightarrow 0$ be a short exact sequence in $\calO_\epsilon^\mu$. Suppose there exists some non-zero $v \in M^{\mathfrak{r}_\epsilon}$ of weight $\lambda$. There either there exists some non-zero $v_1 \in L^{\mathfrak{r}_\epsilon}$ of weight $\lambda$, or there exists some non-zero $v_2 \in N^{\mathfrak{r}_\epsilon}$ of weight $\lambda$.
\end{lemma}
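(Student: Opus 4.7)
The statement is essentially a manifestation of the left exactness of the fixed-points functor $M \mapsto M^{\mathfrak{r}_\epsilon}$, combined with the observation that the maps $f$ and $g$ in the short exact sequence are $U(\g_\epsilon)$-module homomorphisms and therefore preserve both the $\h$-weight grading and the action of $\mathfrak{r}_\epsilon$. My plan is to split into two cases depending on whether or not $g(v)$ vanishes in $N$; in each case the desired witness is produced directly, with no recourse to the structural results of \S4.1.

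If $g(v) \neq 0$, then since $g$ is weight-preserving, $g(v)$ is a non-zero element of $N$ of weight $\lambda$; and since $g$ is a $\g_\epsilon$-module map, for any $r \in \mathfrak{r}_\epsilon$ we have $r \cdot g(v) = g(r \cdot v) = 0$, so $g(v) \in N^{\mathfrak{r}_\epsilon}$, and we may take $v_2 = g(v)$. If instead $g(v) = 0$, then by exactness $v \in \ker(g) = \im(f)$, so there is a unique $v_1 \in L$ with $f(v_1) = v$. Because $f$ is injective and weight-preserving, $v_1$ lies in $L^\lambda$; and because $f$ is a $\g_\epsilon$-module map, $f(\mathfrak{r}_\epsilon \cdot v_1) = \mathfrak{r}_\epsilon \cdot f(v_1) = \mathfrak{r}_\epsilon \cdot v = 0$, so by injectivity of $f$ again, $\mathfrak{r}_\epsilon \cdot v_1 = 0$, i.e.\ $v_1 \in L^{\mathfrak{r}_\epsilon}$. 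I do not anticipate any real obstacle: the lemma is purely formal, depending only on the exactness properties of fixed-points and the elementary compatibility of module homomorphisms with weight decomposition and subalgebra actions, and none of the work of Lemma \ref{centralcharaterlemma} or Lemma \ref{highestweightmodinvariants} is needed for this step.
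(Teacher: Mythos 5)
Your proof is correct and follows exactly the same case split as the paper's own argument: apply $g$ to $v$, take $v_2 = g(v)$ if it is non-zero, and otherwise pull $v$ back through $f$ to obtain $v_1$, using that $f$ and $g$ are weight-preserving module maps. No issues.
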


\begin{proof}
If $v \notin \ker(g)$, then $g(v)$ has weight $\lambda$ and is in $N^{\mathfrak{r}_\epsilon}$. If $v \in \ker(g)$, then since $\ker(g) = \im(f)$, we have $v = f(v_1)$ for some $v_1 \in L$, and since $f$ is injective this $v_1$ must be in $L^{\mathfrak{r}_\epsilon}$ and have weight $\lambda$.
\end{proof}

We can now prove exactness of the functor $R$ defined earlier:

\begin{lemma}
Let $\p$ be a standard parabolic subalgebra of $\g$ with Levi decomposition $\p = \mathfrak{l} \oplus \mathfrak{r}$, and let $\mu \in \h^*$ be such that $\g^\mu = \mathfrak{l}$. Then the functor $(-)^{\mathfrak{r}_\epsilon} = R: \calO_\epsilon^\mu(\g) \rightarrow \calO_\epsilon^\mu(\g^\mu)$ is exact.
\end{lemma}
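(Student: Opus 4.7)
The functor $R$ is left exact essentially by definition: an injection $L \hookrightarrow M$ restricts to an injection $L^{\mathfrak{r}_\epsilon} \hookrightarrow M^{\mathfrak{r}_\epsilon}$ whose image is exactly the kernel of $M^{\mathfrak{r}_\epsilon} \to N^{\mathfrak{r}_\epsilon}$. The content of the lemma is therefore the surjectivity of $M^{\mathfrak{r}_\epsilon} \to N^{\mathfrak{r}_\epsilon}$ for any short exact sequence $0 \to L \to M \to N \to 0$ in $\calO_\epsilon^\mu(\g)$. The plan is to reduce this surjectivity to an obvious weight-space fact by establishing (i) a block decomposition of $\calO_\epsilon^\mu$ and (ii) a weight-space description of $M^{\mathfrak{r}_\epsilon}$ inside each block.

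For (i), I would use Theorem \ref{vermaexttheorem} together with Lemma \ref{finitefiltration} to produce a decomposition $\calO_\epsilon^\mu = \bigoplus_C \calO_\epsilon^{\mu, C}$ indexed by $\Z\Phi_\mu$-cosets $C \subseteq \h^*$, where $\calO_\epsilon^{\mu, C}$ consists of the modules whose highest weight subquotients all have highest weights in $(C, \mu)$. This is proved by induction on the length of a highest weight filtration of a given module: the Ext-vanishing between distinct cosets supplied by Theorem \ref{vermaexttheorem} allows one to split off the direct summand belonging to a new coset at each inductive step. Having set up this decomposition, we may assume that $L, M, N$ all lie in a single block $\calO_\epsilon^{\mu, C}$.

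For (ii), the claim is that for $M \in \calO_\epsilon^{\mu, C}$ we have the weight-space identity
\[
M^{\mathfrak{r}_\epsilon} = \bigoplus_{\lambda' \in C} M^{\lambda'}.
\]
The inclusion $\supseteq$ follows from the observation that, since $\g^\mu$ is the Levi factor of a standard parabolic, $\Z\Phi_\mu$ is spanned by the simple roots lying in $\Phi_\mu$; the weights of $M$ modulo $\Z\Phi_\mu$ therefore lie in $C - \Z_{\geq 0}(\Phi^+ \setminus \Phi_\mu)$, so for $\lambda' \in C$ and $\alpha \in \Phi^+ \setminus \Phi_\mu$ the weight $\lambda' + \alpha$ cannot occur in $M$, forcing $\mathfrak{r}_\epsilon \cdot M^{\lambda'} = 0$. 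For $\subseteq$, given a weight vector $v \in M^{\mathfrak{r}_\epsilon}$ of weight $\lambda'$, the containment $[\g^\mu_\epsilon, \mathfrak{r}_\epsilon] \subseteq \mathfrak{r}_\epsilon$ gives $U(\g^\mu_\epsilon) \cdot v \subseteq M^{\mathfrak{r}_\epsilon}$; this submodule lies in $\calO_\epsilon^\mu(\g^\mu)$, so applying Lemma \ref{finitefiltration} for $\g^\mu$ yields a $\g^\mu$-highest weight vector of some weight $(\lambda'', \mu)$ with $\lambda'' \in \lambda' + \Z\Phi_\mu$. Because this vector also lies in $M^{\mathfrak{r}_\epsilon}$, it is killed by $\n_\epsilon \cap \g^\mu_\epsilon$ and by $\mathfrak{r}_\epsilon$, hence by all of $\n_\epsilon$, so it is a genuine $\g$-highest weight vector of $M$; the block hypothesis then forces $\lambda'' \in C$, so $\lambda' \in C$.

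With this identity in hand, exactness follows formally: for any short exact sequence in $\calO_\epsilon^{\mu, C}$, semisimplicity of the $\h$-action gives a short exact sequence $0 \to L^{\lambda'} \to M^{\lambda'} \to N^{\lambda'} \to 0$ for each $\lambda' \in C$, and summing over $\lambda' \in C$ produces the required short exact sequence of $\mathfrak{r}_\epsilon$-invariants. The main technical obstacle I expect is the block decomposition step: since $\calO_\epsilon$ is not Artinian, one cannot appeal to a Krull--Schmidt decomposition, so the inductive splitting argument has to be set up carefully so as to produce genuine direct-sum decompositions of the modules under consideration, rather than just of their composition factors.
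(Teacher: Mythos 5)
Your proposal is correct and follows essentially the same route as the paper: reduce via Theorem \ref{vermaexttheorem} to modules whose highest-weight sections all lie in a single coset of $\Z\Phi_\mu$, identify $M^{\mathfrak{r}_\epsilon}$ with the sum of the weight spaces indexed by that coset (the paper's Lemma \ref{highestweightmodinvariants}, proved exactly by your maximal-vector-plus-central-character argument), and conclude from the fact that roots in $\Phi^+ \setminus \Phi_\mu$ push such weights out of the support. The only cosmetic difference is that you package the reduction as a formal block decomposition and then argue weight space by weight space, whereas the paper lifts a single weight vector of $N^{\mathfrak{r}_\epsilon}$ to a weight vector of $M$ and checks directly that the lift is $\mathfrak{r}_\epsilon$-invariant.
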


\begin{proof}
\label{functorexactness}
Let $0 \rightarrow L \overset{f}{\rightarrow} M \overset{g}{\rightarrow} N \rightarrow 0$ be a short exact sequence in $\calO_\epsilon^\mu(\g)$.

Since $f$ is injective, the restriction of $f$ to $L^{\mathfrak{r}_\epsilon}$ is still injective. If $m \in \ker(g) \cap M^{\mathfrak{r}_\epsilon}$, then $m = f(l)$ for some $l \in L$. Additionally, if $r \in \mathfrak{r}_\epsilon$, then $f(r \cdot l) = r \cdot m = 0$, so $r \cdot l \in \ker(f) = 0$. Hence $l \in L^{\mathfrak{r}_\epsilon}$. Therefore $0 \rightarrow L^{\mathfrak{r}_\epsilon} \overset{f}{\rightarrow} M^{\mathfrak{r}_\epsilon} \overset{g}{\rightarrow} N^{\mathfrak{r}_\epsilon}$ is exact, i.e. taking $\mathfrak{r}_\epsilon$ invariants is always left exact.

Hence we only need to show that $g: M^{\mathfrak{r}_\epsilon} \rightarrow N^{\mathfrak{r}_\epsilon}$ is surjective, and by Theorem \ref{vermaexttheorem} it suffices to consider the case where $M$ and $N$ both have filtrations by highest weight modules of weights $(\lambda_i, \mu)$, where $\lambda_i - \lambda_j \in \Z\Phi_\mu$ for all $i, j$. Now, let $v \in N^{\mathfrak{r}_\epsilon}$ have weight $\lambda$. By Lemmas \ref{highestweightmodinvariants} and \ref{invariantsexactsequence}, we must have $\lambda \in \lambda_i + \Z\Phi_\mu$ for some (in fact, any) $\lambda_i$. There exists some $w \in g^{-1}(v)$ which is also of weight $\lambda$, so to complete the proof it is enough to show that any element of $M$ of weight $\lambda \in \lambda_i + \Z\Phi_\mu$ is in $M^{\mathfrak{r}_\epsilon}$. But the weight of any element of $M$ lies in $\bigcup (\lambda_i - \Z_{\geq 0} \Phi^+)$. In particular, if $\alpha \in \Phi^+ \backslash \Phi_\mu$ and $\lambda \in \lambda_i + \C\Phi_\mu$, then $\lambda + \alpha \nleq \lambda_i$ for any $\lambda_i$, so $M^{\lambda + \alpha} = 0$. Hence if $v \in M$ is of weight $\lambda \in \lambda_i + \C\Phi_\mu$, then for any $e_\alpha \in \mathfrak{r}_\epsilon$, $e_\alpha \cdot v \in M^{\lambda + \alpha} = 0$, and similarly $\overline{e_\alpha} \cdot v = 0$, so $v \in M^{\mathfrak{r}_\epsilon}$ as required.
\end{proof}

\begin{center}
\begin{tikzpicture}
\draw[thick,->] (-4, 0) -- (4, 0);
\draw[thick,->] (-2, -3.464) -- (2, 3.464);
\draw[thick,->] (2, -3.464) -- (-2, 3.464);
\filldraw[black] (-3, 1) circle (1.5pt) node[anchor = north]{$\lambda_2$};
\filldraw[black] (2, 1) circle (1.5pt) node[anchor = north]{$\lambda_1$};
\filldraw[black] (-2, 1) circle (1.5pt) node[anchor = north]{$\lambda$};
\filldraw[black] (-1, 1) circle (1.5pt) node[anchor = north]{$\lambda_3$};
\filldraw[black] (0.5, 0) circle (1.5pt) node[anchor = north]{$\alpha_1$};
\filldraw[black] (0.25, 0.433) circle (1.5pt) node[anchor = west]{$\alpha_1 + \alpha_2$};
\filldraw[black] (-0.25, 0.433) circle (1.5pt) node[anchor = east]{$\alpha_2$};
\draw[dashed] (2, 1) -- (4.577, -3.464);
\draw[dashed] (2, 1) -- (-4, 1);
\draw[->] (-2, 1) -- (-2.25, 1.433);
\draw[->] (-2, 1) -- (-1.75, 1.433);
\end{tikzpicture}
\end{center}

Above is an illustration of the argument in the case where $\g = \mathfrak{sl}_3$ and $\Phi_\mu = \{\alpha_1, -\alpha_1\}$ for $\alpha_1$ a simple root. Here $M \in \calO_\epsilon^\mu$ has a filtration where the sections are highest weight of weights $(\lambda_1, \mu)$, $(\lambda_2, \mu)$, and $(\lambda_3, \mu)$. If $M^\lambda \neq 0$ for some $\lambda \in \h^*$, then $\lambda$ must lie in the area below the two lines. If $\lambda \in \lambda - \Z_{\geq 0} \Phi_\mu$, then $\lambda + \alpha_2$ and $\lambda + (\alpha_1 + \alpha_2)$ lie above the horizontal line, so for any $m \in M^\lambda$, we have $e_{\alpha_2} \cdot m = \overline{e}_{\alpha_2} \cdot m = e_{\alpha_1 + \alpha_2} \cdot m = \overline{e}_{\alpha_1 + \alpha_2} \cdot m = 0$, i.e $m \in M^{\mathfrak{r}_\epsilon}$.

\begin{proof}[Proof of Theorem \ref{maintheorem}]
As earlier, we let $\phi_M : M \longrightarrow (U(\g_\epsilon) \otimes_{U(\p_\epsilon)} M)^{\mathfrak{r}_\epsilon}$ be given by $\phi_M(m) = 1 \otimes m$ and let $\varphi_N : U(\g_\epsilon) \otimes_{U(\p_\epsilon)} N^{\mathfrak{r}_\epsilon} \longrightarrow N$ be given by $\varphi_N(u \otimes n) = u \cdot n$. We wish to show that these are always isomorphisms. The exactness of the functors $I$ and $R$ combined with Lemma \ref{finitefiltration} and a standard argument using the Five Lemma means it suffices to check this only on highest weight modules.

The map $\phi_M$ is always injective since if $\{m_i : i \in I\}$ is a basis for $M$, then there is a basis for $U(\g_\epsilon) \otimes_{U(\p_\epsilon)} M$ consisting of all elements of the form $u \otimes m_i$ where $u$ is a monomial in $U(\mathfrak{r}_\epsilon^-)$. Hence the element $1 \otimes m \in U(\g_\epsilon) \otimes_{U(\p_\epsilon)} M$ is never 0 if $m$ is non-zero.

Suppose $\phi_M$ is not surjective. Then there exist $u_1, \dots, u_n \in U(\mathfrak{r}_\epsilon^-)$ and $m_1, \dots m_n \in M$ such that not all the $u_k$ are scalars, the $m_k$ are weight vectors, and $\sum u_k \otimes m_k \in (U(\g_\epsilon) \otimes_{U(\p_\epsilon)} M)^{\mathfrak{r}_\epsilon}$. But if $M$ is highest weight of weight $(\lambda, \mu)$ with highest weight generator $m$, then $U(\g_\epsilon) \otimes_{U(\p_\epsilon)} M$ is highest weight of the same weight since $1 \otimes m$ will be a highest weight generator. By Lemma \ref{highestweightmodinvariants}, we have:
\[(U(\g_\epsilon) \otimes_{U(\p_\epsilon)} M)^{\mathfrak{r}_\epsilon} = \bigoplus_{\lambda - \lambda' \in \C\Phi_\mu} (U(\g_\epsilon) \otimes_{U(\p_\epsilon)}M)^{\lambda'}\]
but if $u_k$ is not a scalar, the weight $\lambda'$ of $u_k \otimes m_k$ does not satisfy $\lambda - \lambda' \in \C \Phi_\mu$, giving a contradiction. Hence $\phi_M$ is an isomorphism for any $M \in \calO_\epsilon^\mu(\g)$.

We observe that if $n \in N$ is a highest weight generator of $N$ then certainly $n \in N^{\mathfrak{r}_\epsilon}$, so $n = \varphi_N(1 \otimes n) \in \im(\varphi_N)$ and hence $\varphi_N$ is surjective. To see it is injective, let $K = \ker(\varphi_N)$ and consider the short exact sequence:
\[0 \rightarrow K \hookrightarrow U(\g_\epsilon) \otimes_{U(\p_\epsilon)} N^{\mathfrak{r}_\epsilon} \overset{\varphi_N}{\rightarrow} N \rightarrow 0.\]
Since the functor $R$ is exact, we have another exact sequence:
\[0 \rightarrow K^{\mathfrak{r}_\epsilon} \hookrightarrow (U(\g_\epsilon) \otimes_{U(\p_\epsilon)} N^{\mathfrak{r}_\epsilon})^{\mathfrak{r}_\epsilon} \rightarrow N^{\mathfrak{r}_\epsilon} \rightarrow 0\]
where the map $(U(\g_\epsilon) \otimes_{U(\p_\epsilon)} N^{\mathfrak{r}_\epsilon})^{\mathfrak{r}_\epsilon} \rightarrow N^{\mathfrak{r}_\epsilon}$ is the restriction of $\varphi_N$ to $(U(\g_\epsilon) \otimes_{U(\p_\epsilon)} N^{\mathfrak{r}_\epsilon})^{\mathfrak{r}_\epsilon}$. Now, we observe that $\varphi_N(\phi_{N^{\mathfrak{r}_\epsilon}}(n)) = \varphi_N(1 \otimes n) = n$, so $\varphi_N$ restricted to $(U(\g_\epsilon) \otimes_{U(\p_\epsilon)} N^{\mathfrak{r}_\epsilon})^{\mathfrak{r}_\epsilon}$ is the inverse of $\phi_{N^{\mathfrak{r}_\epsilon}}$ and hence is bijective. Therefore $K^{\mathfrak{r}_\epsilon} = 0$, and so $K = 0$ since if $K$ were non-zero, it would contain a non-zero highest weight vector $v$ which would then be in $K^{\mathfrak{r}_\epsilon}$.
\end{proof}

\section{Twisting Functors}

\subsection{Definition of twisting functors}

Now we aim to prove Theorem \ref{twistingtheoremshort} using a generalisation of the twisting functors for BGG category $\calO$. Fix some $\alpha \in \Delta$ and let $U = U(\g_\epsilon)$, the enveloping algebra of $\g_\epsilon$. We let
\[v(\mathbf{k}, \mathbf{l}, \mathbf{m}, \mathbf{n}) = \prod_{\beta \in I} e_\beta^{k_\beta} \prod_{\beta \in I} \overline{e}_\beta^{l_\beta} \prod_{\beta \in J} h_\beta^{m_\beta} \prod_{\beta \in J} \overline{h}_\beta^{n_\beta} \in U\]
for any $\mathbf{k}, \mathbf{l} \in \Z_{\geq 0}^I$ and $\mathbf{m}, \mathbf{n} \in \Z_{\geq 0 }^J$, where $I = \Phi \backslash \{\alpha\}$ and $J = \Delta$.

We need to consider the localisation of $U$ with respect to the subset $F_\alpha = \{f_\alpha^i \fbar_\alpha^j : i, j \geq 0\}$. Since the ring $U$ has no zero divisors, by \cite[Theorem 2.1.12]{MR} to show the left localisation exists it suffices to show that $F_\alpha$ is a left Ore set, i.e. that for all $u \in U$ and $f \in F_\alpha$, $F_\alpha u \cap U f \neq \emptyset$. In fact, we will also show that $F_\alpha$ is a right Ore set, so
the right localisation also exists and is isomorphic to the left localisation.

\begin{lemma}
Let $\alpha \in \Delta$. The sets $\{f_\alpha^i: i \geq 0\}$, $\{\fbar_\alpha^j: j \geq 0\}$ and $F_\alpha = \{f_\alpha^i \fbar_\alpha^j : i, j \geq 0\}$ are all both left and right Ore sets in $U$.
\end{lemma}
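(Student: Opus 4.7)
The plan is to derive all three Ore conditions from local nilpotence of the derivations $\ad(f_\alpha)$ and $\ad(\fbar_\alpha)$ on $U$, combined with the fact that each of $f_\alpha$ and $\fbar_\alpha$ is killed by its own adjoint action.

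First I would verify local nilpotence. Since $f_\alpha$ is a root vector in the reductive Lie algebra $\g$, $\ad(f_\alpha)$ shifts root spaces by $-\alpha$ and sends $\h$ into $\g_{-\alpha}$, so some fixed power of $\ad(f_\alpha)$ annihilates $\g$. Because $\ad(f_\alpha)$ preserves each of $\g$ and $\overline{\g}$ and acts identically on both, the same power annihilates $\g_\epsilon$, and the Leibniz rule then promotes this to local nilpotence on $U$. The analogue for $\fbar_\alpha$ is easier still: $\ad(\fbar_\alpha)$ sends $\g$ into $\overline{\g}$ and annihilates $\overline{\g}$, so already $\ad(\fbar_\alpha)^2 = 0$ on $\g_\epsilon$, and again the Leibniz rule gives local nilpotence on $U$.

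The main computation is then the standard identity
\[ f_\alpha^N u \;=\; \sum_{k=0}^{N} \binom{N}{k} \ad(f_\alpha)^k(u)\, f_\alpha^{N-k}, \]
valid for all $u \in U$ and $N \geq 0$. This holds because $\ad(f_\alpha)(f_\alpha) = 0$ means $\ad(f_\alpha)$ commutes with right multiplication by $f_\alpha$, so left multiplication by $f_\alpha$ decomposes as a sum of two commuting operators and the binomial theorem applies. Given $u \in U$ and $i \geq 0$, I would pick $M$ with $\ad(f_\alpha)^M(u)=0$; then for $N$ sufficiently large every surviving term has $N - k \geq i$, giving $f_\alpha^N u \in U f_\alpha^i$ and hence the left Ore condition. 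The mirror identity $u f_\alpha^N = \sum_k (-1)^k \binom{N}{k} f_\alpha^{N-k} \ad(f_\alpha)^k(u)$ gives the right Ore condition. The identical argument applied to $\fbar_\alpha$ handles $\{\fbar_\alpha^j : j \geq 0\}$.

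Finally, the relation $[f_\alpha, \fbar_\alpha] = \overline{[f_\alpha, f_\alpha]} = 0$ shows that $f_\alpha$ and $\fbar_\alpha$ commute, so $F_\alpha$ is genuinely multiplicatively closed. For the left Ore condition on $F_\alpha$, given $u \in U$ and $f_\alpha^i \fbar_\alpha^j \in F_\alpha$, I would apply the Ore condition for $\{f_\alpha^i\}$ to obtain $f_\alpha^{n_1} u = v_1 f_\alpha^i$, and then apply the Ore condition for $\{\fbar_\alpha^j\}$ to $v_1$ to obtain $\fbar_\alpha^{n_2} v_1 = v_2 \fbar_\alpha^j$, so that $(f_\alpha^{n_1}\fbar_\alpha^{n_2})\, u = v_2 (f_\alpha^i \fbar_\alpha^j)$; the right Ore condition follows symmetrically. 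The main obstacle here, such as it is, is the verification of local nilpotence together with the observation that $\ad(f_\alpha)$ commutes with right multiplication by $f_\alpha$ so that the binomial identity is valid; everything after that is bookkeeping.
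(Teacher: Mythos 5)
Your proof is correct and follows essentially the same route as the paper: nilpotence of $\ad(f_\alpha)$ and $\ad(\fbar_\alpha)$ on $\g_\epsilon$, the binomial identity $f_\alpha^N u = \sum_k \binom{N}{k}\ad(f_\alpha)^k(u) f_\alpha^{N-k}$, and chaining the two single-variable Ore conditions (using $[f_\alpha,\fbar_\alpha]=0$) to handle $F_\alpha$. The only difference is cosmetic: you justify the binomial identity for all $u \in U$ at once by observing that $\ad(f_\alpha)$ commutes with right multiplication by $f_\alpha$, whereas the paper states it for $x \in \g_\epsilon$ and iterates over monomials.
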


\begin{proof}
We show that each of these sets is a left Ore set; the proofs that they are also right Ore sets are very similar. Let $x \in \g_\epsilon \subseteq U$ and let $j \geq 0$. It is easily verified by induction that:
\[f_\alpha^j x = \sum_{k=0}^j \binom{j}{k} (\ad{f_\alpha})^k(x) f_\alpha^{j-k}\]
But $\ad(f_\alpha)$ is nilpotent, so there exists $l$ such that $(\ad{f_\alpha})^l = 0$. Hence for sufficiently large $j$, we have $f_\alpha^j x = u_j f_\alpha^{j-l}$ for some $u_j \in U$. Applying this repeatedly, we see that given a monomial $v \in U$ and some $i \geq 0$, we can find $u \in U$ and $j \geq 0$ such that $f_\alpha^j v = u f_\alpha^i$. But $U$ is spanned by such monomials, so we have shown that $\{f_\alpha^i : i \geq 0\}$ is a left Ore set. The proof that $\{\fbar_\alpha^j : j \geq 0\}$ is a left Ore set is identical, since $\ad(\fbar_\alpha)$ is also nilpotent.

Finally, to see that $\{f_\alpha^i \fbar_\alpha^j : i, j \geq 0\}$ is a left Ore set, we let $i, j \geq 0$ and let $u \in U$. We must find $u' \in U$, $i', j' \geq 0$ such that $f_\alpha^{i'} \fbar_\alpha^{j'} u = u' f_\alpha^i \fbar_\alpha^j$. Since $\{\fbar_\alpha^j : j \geq 0\}$ is a left Ore set, there exist $j' \geq 0$ and $u'' \in U$ such that $u'' \fbar_\alpha^j = \fbar_\alpha^{j'} u$, and since $\{f_\alpha^i : i \geq 0\}$ is a left Ore set there exist $i' \geq 0$ and $u' \in U$ such that $u' f_\alpha^{i} = f_\alpha^{i'} u''$. Hence we have $u' f_\alpha^i \fbar_\alpha^j = f_\alpha^{i'} u'' \fbar_\alpha^j = f_\alpha^{i'} \fbar_\alpha^{j'} u$ as required.
\end{proof}

We now define $U_\alpha$ to be the localisation of $U$ with respect to $F_\alpha$. The map $\iota : U \rightarrow U_\alpha$ is injective because $U$ has no zero divisors, so $U_\alpha$ has the structure of a $U$-$U$-bimodule. We wish to consider two possible choices of basis for $U_\alpha$, given by $\{f_\alpha^i \fbar_\alpha^j v(\mathbf{k}, \mathbf{l}, \mathbf{m}, \mathbf{n}) : i,j \in \Z, \mathbf{k}, \mathbf{l} \in \Z_{\geq 0}^I, \mathbf{m}, \mathbf{n} \in \Z_{\geq 0}^J\}$ and $\{v(\mathbf{k}, \mathbf{l}, \mathbf{m}, \mathbf{n}) f_\alpha^i \fbar_\alpha^j: i,j \in \Z, \mathbf{k}, \mathbf{l} \in \Z_{\geq 0}^I, \mathbf{m}, \mathbf{n} \in \Z_{\geq 0}^J\}$. These sets can be seen to be bases by using the PBW theorem and the fact that $F_\alpha$ is both a left Ore set and a right Ore set.

\begin{lemma}
\label{relationslemma}
For any $\beta \in \Phi^+$, the following relations hold in $U_\alpha$.
\begin{align}
[e_\alpha, f_\alpha^{-1}] &= f_\alpha^{-2}(-h_\alpha - 2) \\
[e_\alpha, \fbar_\alpha^{-1}] &= - \fbar_\alpha^{-2} \overline{h}_\alpha \\
[\overline{e}_\alpha, f_\alpha^{-1}] &= - f_\alpha^{-2} \overline{h}_\alpha - 2f_\alpha^{-3}\fbar_\alpha \\
[\overline{e}_\alpha, \fbar_\alpha^{-1}] &= 0
\end{align}
For any $h \in \h$:
\begin{align}
[h, f_\alpha^{-1}] &= \alpha(h) f_\alpha^{-1} \\
[h, \fbar_\alpha^{-1}] &= \alpha(h) \fbar_\alpha^{-1} \\
[\overline{h}, f_\alpha^{-1}] &= \alpha(h) f_\alpha^{-2} \fbar_\alpha\\
[\overline{h}, \fbar_\alpha^{-1}] &= 0
\end{align}
For any $\beta \in \Phi^+ \backslash \{\alpha\}$:
\begin{align}
[e_\beta, f_\alpha^{-1}] &= l_1 f_\alpha^{-2} e_{\beta - \alpha} + l_2 f_\alpha^{-3} e_{\beta - 2\alpha} + l_3 f_\alpha^{-4} e_{\beta - 3\alpha} \\
[e_\beta, \fbar_\alpha^{-1}] &= m_1 \fbar_\alpha^{-2} \overline{e}_{\beta - \alpha} \\
[\overline{e}_\beta, f_\alpha^{-1}] &= n_1  f_\alpha^{-2} \overline{e}_{\beta - \alpha} + n_2 f_\alpha^{-3} \overline{e}_{\beta - 2\alpha} + n_3 f_\alpha^{-4} \overline{e}_{\beta - 3\alpha} \\
[\overline{e}_\beta, \fbar_\alpha^{-1}] &= 0
\end{align}
for some $l_i, m_i, n_i \in \C$, letting $e_{\beta'} = 0$ if $\beta' \notin \Phi$.
\end{lemma}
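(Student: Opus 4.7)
The key tool will be the identity $[x, y^{-1}] = -y^{-1}[x,y]y^{-1}$, which holds in any associative algebra whenever $y$ is invertible and follows from applying the derivation $[x, -]$ to $yy^{-1} = 1$. The plan is to use this iteratively in $U_\alpha$ with $y \in \{f_\alpha, \overline{f}_\alpha\}$, combined with the basic brackets in $\g_\epsilon$: in particular $[e_\alpha, f_\alpha] = h_\alpha$, $[e_\alpha, \overline{f}_\alpha] = [\overline{e}_\alpha, f_\alpha] = \overline{h}_\alpha$, $[\overline{e}_\alpha, \overline{f}_\alpha] = 0$, $[\overline{h}_\alpha, f_\alpha] = -2\overline{f}_\alpha$, and the fact that all elements of $\overline{\g}$ commute with one another.

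For the relations ending in $0$, namely (4.4), (4.8), and (4.12), the identity gives the result immediately, since $\overline{f}_\alpha$ commutes with $\overline{e}_\alpha$, $\overline{h}$, and $\overline{e}_\beta$ respectively. For the non-trivial single-term relations (4.1)--(4.3) and (4.5)--(4.7), I would first apply the identity to get $[x, f_\alpha^{-1}] = -f_\alpha^{-1}[x, f_\alpha] f_\alpha^{-1}$, compute $[x, f_\alpha]$ inside $\g_\epsilon$, and then apply the identity a second time to commute the resulting middle element (for instance $\overline{h}_\alpha$ in the case of (4.3)) past one of the $f_\alpha^{-1}$ factors. Each such rearrangement is short because $\overline{f}_\alpha$ commutes with $f_\alpha$, so no cascade of new terms appears.

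For the three-term relations (4.9) and (4.11), the plan is to apply the identity once and then iterate. One starts with $[e_\beta, f_\alpha^{-1}] = \kappa_{(-\alpha, \beta)} f_\alpha^{-1} e_{\beta - \alpha} f_\alpha^{-1}$, then moves $e_{\beta - \alpha}$ leftward past $f_\alpha^{-1}$ using the identity again, which introduces a new term involving $e_{\beta - 2\alpha}$ and a higher power of $f_\alpha^{-1}$. Iterating produces contributions from $e_{\beta - k\alpha}$ for each $k \geq 1$ with $\beta - k\alpha \in \Phi$, and the process terminates after at most three steps by the $\alpha$-string bound (the $\alpha$-string through a root has length at most $4$, attained only in type $G_2$), yielding the three displayed terms. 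For (4.10) the same first move gives $[e_\beta, \overline{f}_\alpha^{-1}] = \kappa_{(-\alpha, \beta)} \overline{f}_\alpha^{-1} \overline{e}_{\beta - \alpha} \overline{f}_\alpha^{-1}$, but now $\overline{e}_{\beta - \alpha}$ commutes with $\overline{f}_\alpha$, so the expression collapses to a single term and no iteration is needed.

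The main obstacle is purely bookkeeping in the iterative argument for (4.9) and (4.11), where at each step one must correctly identify the scalar produced when commuting $e_{\beta - k\alpha}$ past a further $f_\alpha^{-1}$. Since the lemma only asserts the existence of constants $l_i, m_i, n_i \in \C$ rather than giving them explicitly, the calculation is essentially mechanical: each step contributes a factor of $\kappa_{(-\alpha, \beta - k\alpha)}$, and the process stops as soon as $\beta - (k+1)\alpha \notin \Phi$.
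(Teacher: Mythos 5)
Your proposal is correct and is essentially the paper's argument in a different dress: the paper first establishes the corresponding identities in $U$ by iteratively commuting the element past $f_\alpha$ (or $\fbar_\alpha$) and then multiplies by negative powers, which amounts to exactly your identity $[x,y^{-1}] = -y^{-1}[x,y]y^{-1}$ applied repeatedly. Your handling of the termination of the iteration via the $\alpha$-string bound and the observation that $\overline{\g}$ is abelian (so the barred cases collapse) matches the paper's computations for (5.1), (5.7), and (5.9).
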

\begin{proof}
These can be verified by multiplying by powers of $f_\alpha$ and $\fbar_\alpha$ to obtain an expression which holds in $U$. We verify relations (5.1), (5.7) and (5.9); the other relations are similar. To see (1), observe that in $U$:
\begin{align*}
f_\alpha^2 e_\alpha &= f_\alpha e_\alpha f_\alpha + f_\alpha [f_\alpha, e_\alpha] \\
&= f_\alpha e_\alpha f_\alpha + f_\alpha (-h_\alpha) \\
&= f_\alpha e_\alpha f_\alpha + (- h_\alpha - 2) f_\alpha.
\end{align*}
We then multiply on the left by $f_\alpha^{-2}$ and on the right by $f_\alpha^{-1}$ to obtain (5.1). To see (5.7), observe that:
\begin{align*}
f_\alpha^2 \overline{h} &= f_\alpha \overline{h} f_\alpha + f_\alpha [f_\alpha, \overline{h}] \\
&= f_\alpha \overline{h} f_\alpha + \alpha(h) \fbar_\alpha f_\alpha,
\end{align*}
and again we obtain the desired relation by multiplying on the left by $f_\alpha^{-2}$ and on the right by $f_\alpha^{-1}$. Finally, to see (5.9), we use that $\beta - 4\alpha \notin \Phi$ and that if $\beta - n \alpha \in \Phi$ for some $n \in \mathbb{N}$, then $\beta - n \alpha \in \Phi^+ \backslash \{\alpha\}$. We then have:
\begin{align*}
f_\alpha^4 e_\beta &= f_\alpha^3 e_\beta f_\alpha + f_\alpha^3 [f_\alpha, e_\beta] \\
&= f_\alpha^3 e_\beta f_\alpha + l_1 f_\alpha^3 e_{\beta - \alpha} \\
&= f_\alpha^3 e_\beta f_\alpha + l_1 f_\alpha^2 e_{\beta - \alpha} f_\alpha + l_1 f_\alpha^2 [f_\alpha, e_{\beta - \alpha}] \\
&= f_\alpha^3 e_\beta f_\alpha + l_1 f_\alpha^2 e_{\beta - \alpha} f_\alpha + l_2 f_\alpha^2 e_{\beta - 2\alpha} \\
&= f_\alpha^3 e_\beta f_\alpha + l_1 f_\alpha^2 e_{\beta - \alpha} f_\alpha + l_2 (f_\alpha e_{\beta - 2\alpha} f_\alpha + f_\alpha[f_\alpha, e_{\beta - 2\alpha}]) \\
&= f_\alpha^3 e_\beta f_\alpha + l_1 f_\alpha^2 e_{\beta - \alpha} f_\alpha + l_2 f_\alpha e_{\beta - 2\alpha} f_\alpha + l_3 e_{\beta - 3\alpha} f_\alpha,
\end{align*}
and multiplying on the left by $f_\alpha^{-4}$ and on the right by $f_\alpha^{-1}$ gives (5.9).
\end{proof}

\begin{lemma}
\label{subbimodulelemma}
Let $U' = \vspan\{f_\alpha^i \fbar_\alpha^j v(\mathbf{k}, \mathbf{l}, \mathbf{m}, \mathbf{n}) : \mbox{ either } i \geq 0 \mbox{ or } j \geq 0, \mathbf{k}, \mathbf{l} \in \Z_{\geq 0}^I, \mathbf{m}, \mathbf{n} \in \Z_{\geq 0}^J\} \subseteq U_\alpha$. Then $U'$ is a $U$-$U$-subbimodule of $U_\alpha$.
\end{lemma}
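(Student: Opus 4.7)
The plan is to realize $U'$ as the sum of two simpler $U$-$U$-subbimodules of $U_\alpha$, each being a ``half-localization'' of $U$. Define $V_1 := \{u \in U_\alpha : f_\alpha^N u \in U \mbox{ for some } N \geq 0\}$ and $V_2 := \{u \in U_\alpha : \fbar_\alpha^N u \in U \mbox{ for some } N \geq 0\}$; these are the images inside $U_\alpha$ of the partial localizations $U[f_\alpha^{-1}]$ and $U[\fbar_\alpha^{-1}]$, which exist by the previous lemma.

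First I will show each $V_s$ is a $U$-$U$-subbimodule. Right multiplication is immediate: if $f_\alpha^N u \in U$ and $u' \in U$, then $f_\alpha^N (uu') = (f_\alpha^N u) u' \in U$. Left multiplication uses the left Ore condition for $\{f_\alpha^i : i \geq 0\}$: given any $u' \in U$ and $N \geq 0$ there exist $u'' \in U$ and $N' \geq 0$ with $f_\alpha^{N'} u' = u'' f_\alpha^N$, which gives $f_\alpha^{N'}(u'u) = u''(f_\alpha^N u) \in U$, so $u'u \in V_1$. The argument for $V_2$ is identical.

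Next I identify $V_1$ and $V_2$ with spans of basis elements of $U_\alpha$. Given $u \in V_1$ with $f_\alpha^N u \in U$, I expand $f_\alpha^N u = \sum c \, f_\alpha^a \fbar_\alpha^b v(\mathbf{k}, \mathbf{l}, \mathbf{m}, \mathbf{n})$ in a PBW basis of $U$ built from an ordering in which $f_\alpha$ and $\fbar_\alpha$ come first; since $f_\alpha$ and $\fbar_\alpha$ commute, we then have $u = f_\alpha^{-N} \cdot (f_\alpha^N u) = \sum c \, f_\alpha^{a-N} \fbar_\alpha^b v(\mathbf{k}, \mathbf{l}, \mathbf{m}, \mathbf{n})$, which lies in $\vspan\{f_\alpha^i \fbar_\alpha^j v(\mathbf{k}, \mathbf{l}, \mathbf{m}, \mathbf{n}) : j \geq 0\}$. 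The reverse inclusion is clear (any such basis element is of the form $f_\alpha^{-N}(\cdot)$ with $(\cdot) \in U$), so $V_1 = \vspan\{f_\alpha^i \fbar_\alpha^j v(\mathbf{k}, \mathbf{l}, \mathbf{m}, \mathbf{n}) : j \geq 0\}$, and symmetrically $V_2 = \vspan\{f_\alpha^i \fbar_\alpha^j v(\mathbf{k}, \mathbf{l}, \mathbf{m}, \mathbf{n}) : i \geq 0\}$.

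Finally, comparing with the definition of $U'$, the spanning basis elements of $U'$ are precisely those lying in $V_1 \cup V_2$, so $U' = V_1 + V_2$ as $\C$-vector spaces. As the sum of two $U$-$U$-subbimodules of $U_\alpha$, $U'$ is itself a $U$-$U$-subbimodule. The main subtlety is the bookkeeping identifying $V_1$ with the claimed basis span, which hinges on the commutativity of $f_\alpha$ and $\fbar_\alpha$ so that $f_\alpha^{-N}$ passes freely through $\fbar_\alpha^b$; a more computational alternative would be to check closure under multiplication by each algebra generator of $U$ directly using the relations in Lemma \ref{relationslemma}, noting that none of those relations can simultaneously turn a non-negative power of $f_\alpha$ and a non-negative power of $\fbar_\alpha$ into strictly negative ones, but the localization approach avoids this casework.
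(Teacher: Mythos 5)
Your proof is correct, and it is organised differently from the paper's. The paper works directly with the two spanning sets: it first shows closure of $U'$ under right multiplication by rearranging $v\,f_\alpha^k \fbar_\alpha^l v'$ into PBW form, then proves that $U'$ coincides with $\vspan\{v(\mathbf{k},\mathbf{l},\mathbf{m},\mathbf{n})\,f_\alpha^i\fbar_\alpha^j : i \geq 0 \mbox{ or } j \geq 0\}$ by explicitly commuting $f_\alpha^i\fbar_\alpha^j$ past $v$ using the Ore property, and deduces closure under left multiplication from that second description. You instead exhibit $U'$ intrinsically as $V_1 + V_2$, where $V_1$ and $V_2$ are the subsets of $U_\alpha$ whose denominators can be cleared by a power of $f_\alpha$ alone, respectively of $\fbar_\alpha$ alone; the bimodule property of each $V_s$ is then essentially formal (right multiplication is immediate, left multiplication is one application of the left Ore condition), and the only computation left is the PBW identification of $V_s$ with the corresponding span, which uses the commutativity of $f_\alpha$ and $\fbar_\alpha$. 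What your route buys is a characterisation of $U'$ that does not refer to a choice of basis ordering, so the ``similar argument'' the paper invokes for the left action is replaced by a genuine symmetry; what the paper's route buys is that it simultaneously establishes the second basis of $U'$ (and hence of $S_\alpha = U_\alpha/U'$), which is used later, so if you adopt your argument you should still record that $U'$ equals the span of the elements $v(\mathbf{k},\mathbf{l},\mathbf{m},\mathbf{n})\,f_\alpha^i\fbar_\alpha^j$ with $i \geq 0$ or $j \geq 0$ --- which follows from your description of $V_1$ and $V_2$ by running the same PBW expansion with the factors $f_\alpha, \fbar_\alpha$ placed last instead of first.
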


\begin{proof}
We first show that $U'$ is a right $U$-module. If either $i \geq 0$ or $j \geq 0$, both $k \geq 0$ and $l \geq 0$, and $v = v(\mathbf{k}, \mathbf{l}, \mathbf{m}, \mathbf{n}), v' = v(\mathbf{k}', \mathbf{l}', \mathbf{m}', \mathbf{n}')$ for some $\mathbf{k, l, k', l'} \in \Z_{\geq 0}^I$, $\mathbf{m, n, m', n'} \in \Z_{\geq 0}^J$, then by the PBW theorem we have $v f_\alpha^k \fbar_\alpha^l v = \sum_n (f_\alpha^{k_n} \fbar_\alpha^{l_n} v_n)$ for some $k_n, l_n \geq 0$, and $v_n = v(\mathbf{k}_n, \mathbf{l}_n, \mathbf{m}_n, \mathbf{n}_n)$ for some $\mathbf{k}_n, \mathbf{l}_n \in \Z_{\geq 0}^I$, $\mathbf{m}_n, \mathbf{n}_n \in \Z_{\geq 0}^J$. Hence:
\[f_\alpha^i \fbar_\alpha^j v \cdot (f_\alpha^k \fbar_\alpha^l v') = \sum_n (f_\alpha^i \fbar_\alpha^j f_\alpha^{k_n} \fbar_\alpha^{l_n} v_n) = \sum_n (f_\alpha^{i + k_n} \fbar_\alpha^{j +l_n} v_n)\]
In particular this expression is in $U'$. But $U$ is spanned by elements of the form $f_\alpha^k \fbar_\alpha^l v'$, so $U'$ is a right $U$-module.

We now claim that $U' = \vspan\{v(\mathbf{k}, \mathbf{l}, \mathbf{m}, \mathbf{n}) f_\alpha^i \fbar_\alpha^j : \mbox{ either } i \geq 0 \mbox{ or } j \geq 0, \mathbf{k}, \mathbf{l} \in \Z_{\geq 0}^I, \mathbf{m}, \mathbf{n} \in \Z_{\geq 0}^J\}$; once we have this, it follows that $U'$ is a left $U$-module by a similar argument to above. To see this claim, suppose $i \geq 0$ (the case $j \geq 0$ is very similar) and $v = v(\mathbf{k}, \mathbf{l}, \mathbf{m}, \mathbf{n})$ for some $\mathbf{k}, \mathbf{l} \in \Z_{\geq 0}^I$, $\mathbf{m}, \mathbf{n} \in \Z_{\geq 0}^J$. Using the PBW theorem, we have $v f_\alpha^i =  \sum_n(f_\alpha^{i_n} \fbar_\alpha^{j_n} v_n)$ where $i_n, j_n \geq 0$ and $v_n = v(\mathbf{k}_n \mathbf{l}_n, \mathbf{m}_n, \mathbf{n}_n)$ for some $\mathbf{k}_n, \mathbf{l}_n \in \Z_{\geq 0}^I$ and $\mathbf{m}_n, \mathbf{n}_n \in \Z_{\geq 0}^J$. Using the fact that $\{\fbar_\alpha^{j} : j \geq 0\}$ is an Ore set and the PBW theorem, we also have that $v_n \fbar_\alpha^{j} = \sum_m(\fbar_\alpha^{j_{n, m}} f_\alpha^{i_{n, m}} v_{n, m})$ where $i_{n, m} \in \Z_{\geq 0}$, $j_{n, m} \in \Z$, and $v_{n, m} = v(\mathbf{k}_{n, m}, \mathbf{l}_{n, m}, \mathbf{m}_{m, n}, \mathbf{n}_{n, m})$ for some $\mathbf{k}_{n, m}, \mathbf{l}_{n, m} \in \Z_{\geq 0}^I$, $\mathbf{m}_{m, n}, \mathbf{n}_{n, m} \in \Z_{\geq 0}^J$. Hence we have:
\begin{align*}
v f_\alpha^i \fbar_\alpha^j &= \sum_n(f_\alpha^{i_n} \fbar_\alpha^{j_n} v_n \fbar_\alpha^{j}) \\
&= \sum_n(f_\alpha^{i_n} \fbar_\alpha^{j_n} \sum_m(\fbar_\alpha^{j_{n, m}} f_\alpha^{i_{n, m}} v_{n, m})) \\
&= \sum_{n, m} (f_\alpha^{i_n + i_{n, m}} \fbar_\alpha^{j_n + j_{n, m}} v_{n, m}).
\end{align*}
so $v f_\alpha^i \fbar_\alpha^j \in U'$. Hence
\[U' \supseteq \vspan\{v(\mathbf{k}, \mathbf{l}, \mathbf{m}, \mathbf{n}) f_\alpha^i \fbar_\alpha^j : \mbox{ either } i \geq 0 \mbox{ or } j \geq 0, \mathbf{k}, \mathbf{l} \in \Z_{\geq 0}^I, \mathbf{m}, \mathbf{n} \in \Z_{\geq 0}^J\}.\]
The inclusion $U' \subseteq \vspan\{v(\mathbf{k}, \mathbf{l}, \mathbf{m}, \mathbf{n}) f_\alpha^i \fbar_\alpha^j : \mbox{ either } i \geq 0 \mbox{ or } j \geq 0, \mathbf{k}, \mathbf{l} \in \Z_{\geq 0}^I, \mathbf{m}, \mathbf{n} \in \Z_{\geq 0}^J\}$ follows by a similar argument.
\end{proof}

We now define $S_\alpha:= U_\alpha / U'$, a $U$-$U$-bimodule. Observe that $S_\alpha$ has a basis given by:
\[\{f_\alpha^i \fbar_\alpha^j v(\mathbf{k}, \mathbf{l}, \mathbf{m}, \mathbf{n}) + U': i, j < 0, \mathbf{k}, \mathbf{l} \in \Z_{\geq 0}^I, \mathbf{m}, \mathbf{n} \in \Z_{\geq 0}^J\}\]
and another basis given by:
\[\{v(\mathbf{k}, \mathbf{l}, \mathbf{m}, \mathbf{n}) f_\alpha^i \fbar_\alpha^j + U': i, j < 0, \mathbf{k}, \mathbf{l} \in \Z_{\geq 0}^I, \mathbf{m}, \mathbf{n} \in \Z_{\geq 0}^J\}\]
using the two bases of $U'$ considered in the proof of Lemma \ref{subbimodulelemma}. From now on we omit the +$U'$ when writing elements of $S_\alpha$. At several points we will use the fact that in $S_\alpha$ the elements $f_\alpha^{-i} \fbar_\alpha^{0}$ and $f_\alpha^{0} \fbar_\alpha^{-j}$ are zero.  

Consider the element $s_\alpha \in W = N_G(H)/H$. We lift $s_\alpha$ to an element of $N_G(H)$, which defines an automorphism $\phi_\alpha : \g \rightarrow \g$. The automorphism $\phi_\alpha$ maps $h$ to $s_\alpha(h)$ for any $h \in \h$, and hence must also map $\g_{\beta}$ to $\g_{s_\alpha(\beta)}$ for any $\beta \in \Phi$. Now, let $\phi_\alpha(e_\alpha) = k_1 f_\alpha$ and let $\phi_\alpha(f_\alpha) = k_2 e_\alpha$. Then $- k_1 k_2 h_\alpha = [\phi(e_\alpha), \phi(f_\alpha)] = \phi_\alpha([e_\alpha, f_\alpha]) = \phi_\alpha(h_\alpha) = - h_\alpha$, so $k_1 k_2 = 1$. Hence, rescaling $e_\alpha$ and $f_\alpha$ if necessary, we may assume that:
\begin{align}
\phi_\alpha(e_\alpha) &= f_\alpha\\
\phi_\alpha(f_\alpha) &= e_\alpha.
\end{align}
We can then extend $\phi_\alpha$ to an automorphism $\g_\epsilon \rightarrow \g_\epsilon$ by setting $\phi_\alpha(\overline{x}) = \overline{\phi_\alpha(x)}$ for all $x \in \g$. If $M$ is a left $U$-module, we denote by $\phi_\alpha(M)$ the module obtained by twisting the left action on $M$ by $\phi_\alpha$ and write $\cdot_\alpha$ for this action, so that if $m \in M$ and $u \in U$ then $u \cdot_\alpha m = \phi_\alpha(u) \cdot m$. Similarly, we can twist the left action on $M$ by $\phi_\alpha^{-1}$, and we write $\cdot_{\alpha^{-1}}$ for this action.

Let $\mathcal{C}$ be the full subcategory of $U$-mod whose objects are the $\h$-semisimple modules. We define a functor $\mathcal{H}$ from $U$-mod to $\mathcal{C}$ by letting $\mathcal{H}(M)$ be the maximal $\h$-semisimple submodule of $M$. Using $S_\alpha$ and $\phi_\alpha$, we can then define two functors $T_\alpha, G_\alpha: \mathcal{C} \rightarrow \mathcal{C}$ by:
\begin{align*}
T_\alpha M &= \phi_\alpha(S_\alpha \otimes_U M) \\
G_\alpha M &= \mathcal{H}(\Hom_U(S_\alpha, \phi_\alpha^{-1}(M)))
\end{align*}
where we note that the action of $U$ on $G_\alpha M$ is given by $(u \cdot f)(s) = f(s \cdot u)$ for any $u \in U$, $f \in G_\alpha M$ and $s \in S_\alpha$. We also note that if $M, N \in \mathcal{C}$ and $\chi: M \rightarrow N$, then $T_\alpha(\chi): T_\alpha M \rightarrow T_\alpha N$ is given by $T_\alpha(\chi)(s \otimes m) = s \otimes \chi(m)$ and $G_\alpha(\chi): G_\alpha M \rightarrow G_\alpha N$ is given by $G_\alpha(\chi)(\rho) = \chi \circ \rho$.

In order to see these functors are well defined, the only thing to check is that $T_\alpha M \in \mathcal{C}$ for any $M \in \mathcal{C}$. Let $M \in \mathcal{C}$ and let $\{v_i: i \in I\}$ be a set of weight vectors spanning $M$. Then $T_\alpha M$ is spanned by $\{f_\alpha^{-n} \fbar_\alpha^{-m} \otimes v_i : n,m > 0, i \in I\}$. But if $w \in M^{\lambda}$, then for any $h \in \h$:
\begin{align*}
h \cdot_\alpha (f_\alpha^{-n} \fbar_\alpha^{-m} \otimes w) &= (s_\alpha(h)f_\alpha^{-n} \fbar_\alpha^{-m}) \otimes w \\
&= f_\alpha^{-n} \fbar_\alpha^{-m} (s_\alpha(h) - (n+m) \alpha(h))\otimes w \\
&= f_\alpha^{-n} \fbar_\alpha^{-m} \otimes (s_\alpha(h) - (n+m) \alpha(h)) w \\
&= f_\alpha^{-n} \fbar_\alpha^{-m} \otimes (s_\alpha(\lambda) - (n+m)\alpha)(h) w \\
&= (s_\alpha(\lambda) - (n+m)\alpha)(h) (f_\alpha^{-n} \fbar_\alpha^{-m} \otimes w)
\end{align*}
so for any $w$ of weight $\lambda$ we have
\begin{align}
f_\alpha^{-n} \fbar_\alpha^{-m} \otimes w \in (T_\alpha M)^{s_\alpha(\lambda) - (n+m)\alpha}.
\end{align}
In particular, $T_\alpha M$ is spanned by weight vectors, and so is $\h$-semisimple.

\begin{lemma}
\label{weightfunctionslemma}
Let $g \in \Hom_U(S_\alpha, \phi_\alpha^{-1}(M))$. Then $g$ has weight $\lambda$ if and only if for all $i, j \geq 0$, $g(f_\alpha^{-i} \fbar_\alpha^{-j})$ has weight $\lambda + (i+j)\alpha$ in $\phi^{-1}_\alpha(M)$, or equivalently has weight $s_\alpha(\lambda) - (i+j) \alpha$ in $M$.
\end{lemma}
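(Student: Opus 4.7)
Plan. I would unpack the definition directly. By the formula $(u \cdot g)(s) = g(s \cdot u)$ defining the $U$-action on $\Hom_U(S_\alpha, \phi_\alpha^{-1}(M))$, saying that $g$ has weight $\lambda$ is equivalent to requiring $g(sh) = \lambda(h) g(s)$ for every $h \in \h$ and every $s \in S_\alpha$. The proof then reduces to one commutation identity in $U_\alpha$ followed by a generation argument.

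The first step is to derive, in $U_\alpha$, the identity
\[
f_\alpha^{-i} \fbar_\alpha^{-j}\, h \;=\; (h - (i+j)\alpha(h))\, f_\alpha^{-i} \fbar_\alpha^{-j}.
\]
This is a short induction from relations (5.5) and (5.6) of Lemma \ref{relationslemma}: iterating $[h, f_\alpha^{-1}] = \alpha(h) f_\alpha^{-1}$ and $[h, \fbar_\alpha^{-1}] = \alpha(h) \fbar_\alpha^{-1}$ gives $f_\alpha^{-i} h = (h - i\alpha(h)) f_\alpha^{-i}$ and $\fbar_\alpha^{-j} h = (h - j\alpha(h)) \fbar_\alpha^{-j}$, and composing them yields the identity. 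Applying $g$, using that it is left $U$-linear with target $\phi_\alpha^{-1}(M)$, turns this into
\[
g(f_\alpha^{-i} \fbar_\alpha^{-j}\, h) \;=\; h \cdot_{\alpha^{-1}} g(f_\alpha^{-i} \fbar_\alpha^{-j}) \;-\; (i+j)\alpha(h)\, g(f_\alpha^{-i} \fbar_\alpha^{-j}).
\]
For the forward direction, setting the left-hand side equal to $\lambda(h) g(f_\alpha^{-i} \fbar_\alpha^{-j})$ and letting $h$ range over $\h$ gives exactly that $g(f_\alpha^{-i} \fbar_\alpha^{-j})$ is a weight vector in $\phi_\alpha^{-1}(M)$ of weight $\lambda + (i+j)\alpha$. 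The translation to $M$-weight $s_\alpha(\lambda) - (i+j)\alpha$ is the general fact that twisting by $\phi_\alpha^{-1}$ sends $M$-weight $\nu$ to $\phi_\alpha^{-1}(M)$-weight $s_\alpha(\nu)$, combined with $s_\alpha(\alpha) = -\alpha$.

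For the converse, the only mild subtlety is that the weight condition must hold on \emph{every} $s$, not only on $s = f_\alpha^{-i}\fbar_\alpha^{-j}$. I would fix $h \in \h$ and consider the map $\psi_h : S_\alpha \to \phi_\alpha^{-1}(M)$ defined by $\psi_h(s) := g(sh) - \lambda(h) g(s)$; a direct check shows $\psi_h$ is itself a left $U$-module homomorphism (left multiplication by $u \in U$ commutes with right multiplication by $h$, and $g$ is $U$-linear). From the PBW-type basis of $S_\alpha$ given by $\{v(\mathbf{k},\mathbf{l},\mathbf{m},\mathbf{n}) f_\alpha^{-i}\fbar_\alpha^{-j} : i, j > 0\}$, the elements $\{f_\alpha^{-i} \fbar_\alpha^{-j} : i, j > 0\}$ generate $S_\alpha$ as a left $U$-module, so $\psi_h$ vanishes globally as soon as it vanishes on this set; by the computation above, this is precisely the hypothesis. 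The boundary cases $i = 0$ or $j = 0$ are vacuous since $f_\alpha^{0}\fbar_\alpha^{-j} = f_\alpha^{-i}\fbar_\alpha^{0} = 0$ in $S_\alpha$.

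I do not anticipate any real obstacle: the proof is essentially a single commutation identity plus a generation argument, and the only real care required is in bookkeeping the left/right action conventions together with the interaction $\phi_\alpha|_{\h} = s_\alpha$ and $s_\alpha^2 = 1$ when passing between weights in $M$ and weights in $\phi_\alpha^{-1}(M)$.
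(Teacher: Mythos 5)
Your proposal is correct and follows essentially the same route as the paper: the forward direction is the identical computation using the commutation identity $f_\alpha^{-i}\fbar_\alpha^{-j}h = (h-(i+j)\alpha(h))f_\alpha^{-i}\fbar_\alpha^{-j}$ derived from relations (5.5)--(5.6), and your converse via the auxiliary homomorphism $\psi_h$ vanishing on the generators $f_\alpha^{-i}\fbar_\alpha^{-j}$ is just a repackaging of the paper's direct check of the weight condition on the spanning set $\{v(\mathbf{k},\mathbf{l},\mathbf{m},\mathbf{n})f_\alpha^{-i}\fbar_\alpha^{-j}\}$. No gaps.
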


\begin{proof}
First we assume that $g$ has weight $\lambda$. Then for any $h \in \h$, $i, j \geq 0$:
\begin{align*}
\lambda(h) g(f_\alpha^{-i} \fbar_\alpha^{-j}) &= (h \cdot g)(f_\alpha^{-i} \fbar_\alpha^{-j}) \\
&= g(f_\alpha^{-i} \fbar_\alpha^{-j} h) \\
&= g((h - (i+j)\alpha(h)) f_\alpha^{-i} \fbar_\alpha^{-j}) \\
&= (h - (i+j)\alpha(h)) \cdot g(f_\alpha^{-i} \fbar_\alpha^{-j})
\end{align*}
so $g(f_\alpha^{-i} \fbar_\alpha^{-j})$ has weight $\lambda + (i+j)\alpha$ in $\phi_\alpha^{-1}(M)$, and hence has weight $s_\alpha(\lambda) - (i+j)\alpha$ in $M$.

On the other hand, suppose $g(f_\alpha^{-i} \fbar_\alpha^{-j})$ has weight $s_\alpha(\lambda) - (i+j)\alpha$ in $M$ for all $i, j \geq 0$. Then for any $h \in \h, i, j \geq 0$, $u = v(\mathbf{k}, \mathbf{l}, \mathbf{m}, \mathbf{n})$ for some $\mathbf{k}, \mathbf{l} \in \Z_{\geq 0}^I$, $\mathbf{m}, \mathbf{n} \in \Z_{\geq 0}^J$:
\begin{align*}
(h \cdot g)(u f_\alpha^{-i} \fbar_\alpha^{-j}) &= g(u f_\alpha^{-i} \fbar_\alpha^{-j} h)\\
&=  u \cdot g(f_\alpha^{-i} \fbar_\alpha^{-j} h) \\
&= u \cdot g((h - (i+j)\alpha(h)) f_\alpha^{-i} \fbar_\alpha^{-j}) \\
&= u \cdot (h - (i+j)\alpha(h)) \cdot g(f_\alpha^{-i} f_\alpha^{-j}) \\
&= u \cdot \lambda(h) g(f_\alpha^{-i} \fbar_\alpha^{-j}) \\
&= \lambda(h) g(u f_\alpha^{-i} \fbar_\alpha^{-j})
\end{align*}
and hence $g$ has weight $\lambda$.
\end{proof}

\begin{corollary}
\label{weightfunctionscor}
Let $g \in \Hom_U(S_\alpha, \phi_\alpha^{-1}(M))$, Then $g$ is a weight vector if and only if $g(f_\alpha^{-i} \fbar_\alpha^{-j})$ is a weight vector for all $i, j \geq 0$.  
\end{corollary}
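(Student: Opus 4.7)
The forward direction is immediate from Lemma~\ref{weightfunctionslemma}: if $g$ has weight $\lambda$, then each $g(f_\alpha^{-i}\fbar_\alpha^{-j})$ has the specific weight $s_\alpha(\lambda) - (i+j)\alpha$ in $M$, and is therefore in particular a weight vector.

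For the reverse direction, I would assume $g \neq 0$ and fix some $(i_0, j_0)$ with $g(f_\alpha^{-i_0}\fbar_\alpha^{-j_0}) \neq 0$; let $\nu_0$ denote its weight in $M$. The plan is to show that $g$ is a weight vector of weight $\lambda := s_\alpha(\nu_0) - (i_0+j_0)\alpha$, which by Lemma~\ref{weightfunctionslemma} reduces to showing that every nonzero $g(f_\alpha^{-i}\fbar_\alpha^{-j})$ has weight $\nu_0 + (i_0+j_0-i-j)\alpha$ in $M$. The idea is to compare the two points $(i, j)$ and $(i_0, j_0)$ to a common ``upper bound'' $(I, J) := (\max(i, i_0), \max(j, j_0))$. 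Since $f_\alpha$ and $\fbar_\alpha$ commute in $U$, iteration of the left $U$-linearity identity $g(f_\alpha \cdot s) = \phi_\alpha^{-1}(f_\alpha) \cdot g(s) = e_\alpha \cdot g(s)$, together with its $\fbar_\alpha$ analogue producing $\overline{e}_\alpha$, yields
\[
g(f_\alpha^{-i_0}\fbar_\alpha^{-j_0}) = e_\alpha^{I-i_0}\overline{e}_\alpha^{J-j_0} \cdot g(f_\alpha^{-I}\fbar_\alpha^{-J}),
\]
and analogously for $(i, j)$. Since the left side is nonzero, $g(f_\alpha^{-I}\fbar_\alpha^{-J})$ is itself nonzero with some weight $\nu^*$, forcing $\nu_0 = \nu^* + (I+J-i_0-j_0)\alpha$. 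Any nonzero $g(f_\alpha^{-i}\fbar_\alpha^{-j})$ then has weight $\nu^* + (I+J-i-j)\alpha = \nu_0 + (i_0+j_0-i-j)\alpha$, as required.

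The main potential obstacle is resisting the more naive approach of trying to relate $(i_0, j_0)$ directly to $(i, j)$ by a chain of $e_\alpha$ and $\overline{e}_\alpha$ applications; such a chain fails whenever an intermediate value of $g$ vanishes. Routing through the joint upper bound $(I, J)$ bypasses this entirely, since both $g(f_\alpha^{-i_0}\fbar_\alpha^{-j_0})$ and $g(f_\alpha^{-i}\fbar_\alpha^{-j})$ are recovered from the single nonzero element $g(f_\alpha^{-I}\fbar_\alpha^{-J})$, making the weight comparison automatic.
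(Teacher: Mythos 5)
Your proof is correct, and its core mechanism is the same as the paper's: use $U$-linearity of $g$ together with $\phi_\alpha^{-1}(f_\alpha)=e_\alpha$, $\phi_\alpha^{-1}(\fbar_\alpha)=\overline{e}_\alpha$ to relate the weights of the various $g(f_\alpha^{-i}\fbar_\alpha^{-j})$ by a chain of raising operators, then invoke Lemma \ref{weightfunctionslemma}. The difference is where you anchor the chain. The paper fixes the weight $\lambda'$ of $g(f_\alpha^{-1}\fbar_\alpha^{-1})$ and propagates outward to all $(i,j)$; this tacitly assumes the relevant values (in particular the one at $(1,1)$) are nonzero, since the operator identities only determine $g$ at smaller indices from $g$ at larger ones, and a vanishing intermediate value breaks the weight bookkeeping. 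You instead anchor at the coordinatewise maximum $(I,J)$ of $(i_0,j_0)$ and $(i,j)$, from which both values are obtained by applying $e_\alpha^{I-\cdot}\overline{e}_\alpha^{J-\cdot}$ to the single element $g(f_\alpha^{-I}\fbar_\alpha^{-J})$, which is forced to be nonzero because $g(f_\alpha^{-i_0}\fbar_\alpha^{-j_0})\neq 0$. This makes the weight comparison unconditional, and the zero values of $g$ cause no trouble since they lie in every weight space, so Lemma \ref{weightfunctionslemma} still applies. Your version is thus a strictly more careful rendering of the same argument, closing a small gap the paper leaves implicit.
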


\begin{proof}
Suppose $g \in \Hom_U(S_\alpha, \phi_\alpha^{-1}(M))$ is such that $g(f_\alpha^{-i} \fbar_\alpha^{-j})$ is a weight vector for all $i, j \geq 0$. Since $e_\alpha \cdot_{\alpha^{-1}} g(f_\alpha^{-i} \fbar_\alpha^{-j}) = g(f_\alpha^{-i+1} \fbar_\alpha^{-j})$, in $\phi_\alpha^{-1}(M)$ we have that $\wt(g(f_\alpha^{-i} \fbar_\alpha^{-j})) + \alpha = \wt(g(f_\alpha^{-i+1} \fbar_\alpha^{-j}))$ and similarly we have $\wt(g(f_\alpha^{-i} \fbar_\alpha^{-j})) + \alpha = \wt(g(f_\alpha^{-i} \fbar_\alpha^{-j+1}))$. Hence if in $\phi_\alpha^{-1}(M)$ the weight of $g(f_\alpha^{-1} \fbar_\alpha^{-1})$ is $\lambda'$, we have that the weight of $g(f_\alpha^{-i} \fbar_\alpha^{-j})$ is $\lambda' - 2\alpha + (i+j)\alpha$. Applying Lemma \ref{weightfunctionslemma}, we see that $g$ is indeed a weight vector.
\end{proof}

\begin{lemma}
$T_\alpha$ is right exact and $G_\alpha$ is left exact.
\end{lemma}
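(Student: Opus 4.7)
The plan is to split each functor into a composition of pieces whose exactness properties are either standard or easy to check directly. For $T_\alpha$ we write $T_\alpha = \phi_\alpha \circ (S_\alpha \otimes_U -)$. The tensor product functor $S_\alpha \otimes_U - : U\text{-mod} \to U\text{-mod}$ is right exact by standard homological algebra. The twist $\phi_\alpha$ is an exact functor (indeed a self-equivalence of $U\text{-mod}$ with inverse given by twisting by $\phi_\alpha^{-1}$), as it acts as the identity on underlying vector spaces and on $\C$-linear maps, only relabelling the $U$-action. The composition is therefore right exact on $U\text{-mod}$. Because $\mathcal{C}$ is a full subcategory of $U\text{-mod}$ closed under taking sub-modules and quotients, a sequence in $\mathcal{C}$ is exact in $\mathcal{C}$ if and only if it is exact in $U\text{-mod}$; hence $T_\alpha : \mathcal{C} \to \mathcal{C}$ is right exact.

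For $G_\alpha$ I would write it as the composition $\mathcal{H} \circ \Hom_U(S_\alpha, -) \circ \phi_\alpha^{-1}$. The twist $\phi_\alpha^{-1}$ is exact as above, and $\Hom_U(S_\alpha, -)$ is left exact for standard reasons. It remains to check that the functor $\mathcal{H} : U\text{-mod} \to \mathcal{C}$ assigning to $M$ its maximal $\h$-semisimple submodule is left exact. Given an exact sequence $0 \to A \xrightarrow{f} B \xrightarrow{g} C$, the induced maps $\mathcal{H}(f)$ and $\mathcal{H}(g)$ are just the restrictions of $f$ and $g$, so $\mathcal{H}(f)$ is injective and $\mathcal{H}(g) \circ \mathcal{H}(f) = 0$. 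For exactness at $\mathcal{H}(B)$, if $b \in \mathcal{H}(B)$ satisfies $g(b) = 0$ then $b = f(a)$ for a unique $a \in A$, and since $f$ is $U(\h)$-equivariant and injective, $U(\h) \cdot a \cong U(\h) \cdot b \subseteq \mathcal{H}(B)$ is $\h$-semisimple, giving $a \in \mathcal{H}(A)$. One can alternatively deduce this from the fact that $\mathcal{H}$ is the right adjoint of the inclusion $\mathcal{C} \hookrightarrow U\text{-mod}$.

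The composition of left exact functors is left exact, so $G_\alpha$ is left exact. No step presents a genuine obstacle; the only mildly delicate point is the left exactness of $\mathcal{H}$, which is handled by the argument above.
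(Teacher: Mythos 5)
Your argument is correct and follows essentially the same route as the paper: both decompose $T_\alpha$ into a twist composed with a right exact tensor functor (the paper phrases the twist via $\phi_\alpha(S_\alpha\otimes_U M)\cong\phi_\alpha(S_\alpha)\otimes_U M$) and $G_\alpha$ as $\mathcal{H}$ composed with a left exact Hom functor. The only difference is that you supply the short verification that $\mathcal{H}$ is left exact, which the paper asserts without proof.
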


\begin{proof}
The fact $T_\alpha$ is right exact follows immediately from the definition, because for any module $M$ we have that $\phi_\alpha(S_\alpha \otimes_U M) \cong \phi_\alpha(S_\alpha) \otimes_U M$ and taking the tensor product with a fixed module always defines a right exact functor. Let $G_\alpha' M := \Hom_U(S_\alpha, \phi_\alpha^{-1}(M))$. Then $G_\alpha'$ and $\mathcal{H}$ are both left exact, so, since composition of left exact functors is left exact, $G_\alpha$ is also left exact.
\end{proof}

\subsection{Equivalence}

Retaining the notation of the previous section, we can now precisely state and prove Theorem \ref{twistingtheoremshort}:
\begin{theorem}
\label{maintwistingtheorem}
Let $\mu \in \h^*$ be such that $\mu(h_\alpha) \neq 0$. Then $T_\alpha$ and $G_\alpha$ give an equivalence of categories between $\calO^\mu$ and $\calO^{s_\alpha(\mu)}$.
\end{theorem}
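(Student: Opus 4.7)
The strategy is a standard adjunction-equivalence: construct the unit and counit of the adjunction $(T_\alpha, G_\alpha)$ and argue they are natural isomorphisms under the standing hypothesis $\mu(h_\alpha) \neq 0$. First I would verify that $T_\alpha$ maps $\calO_\epsilon^\mu(\g)$ into $\calO_\epsilon^{s_\alpha(\mu)}(\g)$, and symmetrically for $G_\alpha$. The $\h$-semisimplicity and weight support are handled by formula (5.13). For the generalised $\overline{\h}$-weight, a computation using relation (5.7) gives
\[\overline{h} \cdot (f_\alpha^{-n}\fbar_\alpha^{-m} \otimes w) = f_\alpha^{-n}\fbar_\alpha^{-m} \otimes \overline{h} \cdot w + n\alpha(h) f_\alpha^{-n-1}\fbar_\alpha^{-m+1} \otimes w,\]
and iterating this, using that $f_\alpha^{-i}\fbar_\alpha^{0}$ vanishes in $S_\alpha$, yields local nilpotence of $\overline{h} - \mu(h)$; the $\phi_\alpha$-twist then converts this into the required generalised eigenvalue $s_\alpha(\mu)(h)$. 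Local finiteness of $\n_\epsilon$ follows similarly from relations (5.9)--(5.12), and finite generation is most cleanly verified by first identifying $T_\alpha M_{\lambda,\mu}$ on Verma modules (see the third paragraph) and then passing to general modules using Lemma \ref{finitefiltration}.

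The next step is exactness. $T_\alpha$ is automatically right exact and $G_\alpha$ automatically left exact. For the harder directions I would establish flatness of $S_\alpha$ as a right $U$-module, inherited from the flatness of the Ore localisation $U \hookrightarrow U_\alpha$ together with the explicit sub-bimodule description of $U'$ in Lemma \ref{subbimodulelemma}; this yields full exactness of both functors on the relevant subcategories. The unit $\eta_M : M \to G_\alpha T_\alpha M$, $m \mapsto (s \mapsto s \otimes m)$ (landing in the $\h$-semisimple part $\mathcal{H}(-)$ because $M$ itself is $\h$-semisimple), and the counit $\epsilon_N: T_\alpha G_\alpha N \to N$, are the standard ones coming from the tensor-hom adjunction combined with $\phi_\alpha \dashv \phi_\alpha^{-1}$. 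With both functors exact, a Five Lemma argument combined with Lemma \ref{finitefiltration} reduces the proof that $\eta$ and $\epsilon$ are natural isomorphisms to the case of Verma modules.

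This final reduction is where the hypothesis $\mu(h_\alpha) \neq 0$ enters, and I expect it to be the main obstacle. A direct check using the commutation relations of Lemma \ref{relationslemma} shows that $f_\alpha^{-1}\fbar_\alpha^{-1} \otimes 1_{\lambda,\mu} \in T_\alpha M_{\lambda,\mu}$ has weight $s_\alpha(\lambda) - 2\alpha$ and is annihilated by $\n_\epsilon$ in the twisted action, making it a candidate highest weight generator. Computing $\fbar_\alpha \cdot_\alpha = \overline{e}_\alpha\cdot$ on this vector, the terms involving $\fbar_\alpha^{-1}\overline{e}_\alpha$ and $f_\alpha^{-3}\fbar_\alpha\fbar_\alpha^{-1}$ both vanish (the latter because $f_\alpha^{-3}\fbar_\alpha^0 = 0$ in $S_\alpha$), leaving a nonzero scalar multiple $-\mu(h_\alpha) f_\alpha^{-2}\fbar_\alpha^{-1} \otimes 1_{\lambda,\mu}$ precisely when $\mu(h_\alpha) \neq 0$. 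Together with the actions of the other negative root vectors pushed through $\phi_\alpha$, this suffices to generate all of $T_\alpha M_{\lambda, \mu}$ from the candidate highest weight vector, so $T_\alpha M_{\lambda,\mu}$ is the Verma module $M_{s_\alpha(\lambda) - 2\alpha, s_\alpha(\mu)}$ in $\calO_\epsilon^{s_\alpha(\mu)}$. A parallel analysis of $G_\alpha$ on Verma modules in $\calO_\epsilon^{s_\alpha(\mu)}$ then shows that $\eta$ and $\epsilon$ send highest weight generators to highest weight generators, hence are isomorphisms on Verma modules, completing the proof via the reduction of the second paragraph.
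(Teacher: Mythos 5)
Your overall architecture (unit/counit, exactness of both functors, Five Lemma reduction to Vermas) is not the paper's, and it contains a genuine error at its load-bearing step: $S_\alpha = U_\alpha/U'$ is \emph{not} flat as a right $U$-module. Flatness of the Ore localisation $U \hookrightarrow U_\alpha$ does not pass to the quotient: right multiplication by $f_\alpha$ on $S_\alpha$ sends $f_\alpha^{-1}\fbar_\alpha^{-1}$ to $f_\alpha^{0}\fbar_\alpha^{-1}=0$, so applying $S_\alpha \otimes_U (-)$ to the injection $U \xrightarrow{\cdot f_\alpha} U$ is not injective and $\operatorname{Tor}_1^U(S_\alpha, U/Uf_\alpha) \neq 0$. (This is precisely why classical twisting functors are only right exact: they annihilate $f_\alpha$-torsion.) The exactness you need is not a property of the bimodule $S_\alpha$ but of the subcategory: one must show that every object of $\calO_\epsilon^\mu$ with $\mu(h_\alpha)\neq 0$ is free as a module over $U(\mathfrak{a})$, $\mathfrak{a} = \langle f_\alpha, \fbar_\alpha\rangle$ (Lemma \ref{freemodulelemma} in the paper). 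This is where the hypothesis $\mu(h_\alpha)\neq 0$ first does real work; you locate its role only in the Verma-module computation at the end, which misses the point that without torsion-freeness $T_\alpha$ would kill submodules and could not be faithful, let alone an equivalence.

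Second, even granting exactness, the reduction is incomplete. Lemma \ref{finitefiltration} filters by arbitrary highest weight modules, not Vermas, so the Five Lemma lands you on quotients of Vermas; and the identification $T_\alpha M_{\lambda,\mu}\cong M_{s_\alpha(\lambda)-2\alpha,\, s_\alpha(\mu)}$ (as opposed to merely a highest weight quotient of it) is asserted without proof --- establishing it independently of the equivalence requires a character or freeness argument. More substantively, bijectivity of the unit and counit even on a single highest weight module is not formal: one needs a criterion for when $f_\alpha^{-i}\fbar_\alpha^{-j}\otimes n = 0$ in $S_\alpha\otimes_U M$ (the paper's Lemma \ref{tensorzerolemma}: this happens iff $n\in f_\alpha^i M + \fbar_\alpha^j M$, proved via balanced maps and the freeness result) and a way to construct elements of $G_\alpha N = \mathcal{H}(\Hom_U(S_\alpha,\phi_\alpha^{-1}(N)))$ with prescribed values (Lemma \ref{homconstruction}). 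The paper in fact avoids your Five Lemma reduction for the unit and counit entirely, proving $\psi_M$ and $\epsilon_N$ are isomorphisms for arbitrary objects directly from these two tools, and uses the filtration argument only to show the functors land in the correct blocks. Your computation that $\overline{e}_\alpha\cdot_\alpha(f_\alpha^{-1}\fbar_\alpha^{-1}\otimes v) = -\mu(h_\alpha)f_\alpha^{-2}\fbar_\alpha^{-1}\otimes v$ is correct and matches the paper, but it is only one ingredient of one part of the argument.
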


We prove the following lemma, which immediately implies Theorem \ref{maintwistingtheorem}.
\begin{lemma}
\label{maintwistinglemma}
Let $\mu \in \h^*$ be such that $\mu(h_\alpha) \neq 0$. Then:

\begin{enumerate}
\item[(a)] $T_\alpha$ restricts to a functor $\calO_\epsilon^\mu \rightarrow \calO_\epsilon^{s_\alpha(\mu)}$.

\item[(b)] For any $M \in \calO_\epsilon^\mu$, the map $\psi_M : M \rightarrow G_\alpha T_\alpha M$ given by $\psi_M(m)(s) = s \otimes m$ is an isomorphism

\item[(c)] For any $N \in \calO_\epsilon^{s_\alpha(\mu)}$, the map $\epsilon_N : T_\alpha G_\alpha N \rightarrow N$ given by $\epsilon_N(s \otimes g) = g(s)$ is an isomorphism.

\item[(d)] $G_\alpha$ restricts to a functor $\calO_\epsilon^{s_\alpha(\mu)} \rightarrow \calO_\epsilon^\mu$.

\item[(e)] The transformations $\psi: \id_{\calO_\epsilon^\mu} \rightarrow G_\alpha T_\alpha$ and $\epsilon: T_\alpha G_\alpha \rightarrow \id_{\calO_\epsilon^{s_\alpha(\mu)}}$ are natural.
\end{enumerate}
\end{lemma}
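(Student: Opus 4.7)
The plan is to establish (a) first, note (e) as a routine diagram chase, prove (b) and (c) by reducing to highest weight modules via Lemma \ref{finitefiltration} and the Five Lemma, and finally verify (d) by a mirror argument to (a).

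For (a), I check each defining condition of $\calO_\epsilon$ for $T_\alpha M$ together with the claim that $\overline{\h}$ has generalized eigenvalue $s_\alpha(\mu)$. Semisimplicity of $\h$ is contained in equation (5.13). For local finiteness of $\overline{\h}$ with the correct generalized eigenvalue, let $w \in M$ be a weight vector that is also a $\overline{\h}$-generalized eigenvector with eigenvalue $\mu$; commuting $\overline{h}$ past $f_\alpha^{-n}\overline{f}_\alpha^{-m}$ via (5.7) and (5.8) and applying $\phi_\alpha$ gives
\[
(\overline{h} - s_\alpha(\mu)(h)) \cdot_\alpha (f_\alpha^{-n}\overline{f}_\alpha^{-m} \otimes w) = f_\alpha^{-n}\overline{f}_\alpha^{-m} \otimes Y(h) w + n\alpha(h)\, f_\alpha^{-n-1}\overline{f}_\alpha^{-m+1} \otimes w,
\]
with $Y(h) := \overline{s_\alpha(h)} - s_\alpha(\mu)(h)$ locally nilpotent on $M$; iterating kills both summands because the first inherits nilpotence from $M$ and the second lowers $m$ until $\overline{f}_\alpha^{0}$ vanishes in $S_\alpha$. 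A parallel argument using the remaining relations of Lemma \ref{relationslemma} handles $\n_\epsilon$. Finite generation is the essential use of $\mu(h_\alpha) \neq 0$: if $\{v_i\}$ is a finite generating set of $M$, I claim $\{f_\alpha^{-1}\overline{f}_\alpha^{-1}\otimes v_i\}$ generates $T_\alpha M$. Acting by $\overline{f}_\alpha \cdot_\alpha = \overline{e}_\alpha$ on $f_\alpha^{-n}\overline{f}_\alpha^{-1}\otimes v_i$ via (5.3) yields $f_\alpha^{-n}\overline{f}_\alpha^{-1}\otimes \overline{e}_\alpha v_i - n f_\alpha^{-n-1}\overline{f}_\alpha^{-1}\otimes \overline{h}_\alpha v_i$ in $S_\alpha$, and since the leading term of $\overline{h}_\alpha v_i$ is $\mu(h_\alpha) v_i$ with $\mu(h_\alpha) \neq 0$, an induction on $n$ extracts $f_\alpha^{-n-1}\overline{f}_\alpha^{-1}\otimes v_i$ from known elements; an analogous step using $f_\alpha \cdot_\alpha = e_\alpha$ and relation (5.2) climbs the $\overline{f}_\alpha$-exponent, and the general element $f_\alpha^{-n}\overline{f}_\alpha^{-m}\otimes m$ is then reached by the full twisted $U$-action on the $v_i$.

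For (e), naturality of $\psi$ and $\epsilon$ is an immediate diagram chase from the formulas $\psi_M(m)(s) = s\otimes m$ and $\epsilon_N(s\otimes g) = g(s)$. For (b) and (c), I combine the right exactness of $T_\alpha$, the left exactness of $G_\alpha$, Lemma \ref{finitefiltration}, and the Five Lemma, in the style of the argument concluding the proof of Theorem \ref{maintheorem}, to reduce the claim that $\psi_M$ and $\epsilon_N$ are isomorphisms to the case of highest weight modules. For a highest weight $M$ of weight $(\lambda, \mu)$, the PBW theorem and the explicit basis $\{f_\alpha^{-i}\overline{f}_\alpha^{-j} v(\mathbf k, \mathbf l, \mathbf m, \mathbf n) : i, j \geq 1\}$ of $S_\alpha$ give a weight basis of $T_\alpha M$; a weight-by-weight comparison using Lemma \ref{weightfunctionslemma} and Corollary \ref{weightfunctionscor} shows $\psi_M$ is bijective, and a parallel calculation handles $\epsilon_N$ via the canonical element of $G_\alpha N$ arising from a highest weight generator of $N$.

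For (d), I verify the defining conditions of $\calO_\epsilon^\mu$ for $G_\alpha N$ by mirror arguments to (a): $\h$-semisimplicity is automatic from the application of $\mathcal{H}$; the formula $(\overline{h}\cdot g)(s) = g(s\cdot\overline{h})$ combined with commutation in $S_\alpha$ gives generalized eigenvalue $\mu$; finite generation is once again the essential use of $\mu(h_\alpha) \neq 0$, now exploited via the evaluations $g(f_\alpha^{-1}\overline{f}_\alpha^{-1})$ of weight-vector homomorphisms together with Corollary \ref{weightfunctionscor}. The principal obstacle throughout is finite generation in (a), with its twin in (d): it is the unique point where $\mu(h_\alpha) \neq 0$ is essential and mirrors its role in the classical BGG twisting story. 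Once it is in place, the remaining ingredients, namely exactness, Lemma \ref{finitefiltration}, and calculations with PBW bases, follow standard patterns.
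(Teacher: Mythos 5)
Your outline of (a) and (e) is broadly in the spirit of the paper (though for (a) the paper first reduces to highest weight modules via Lemma \ref{finitefiltration}, Lemma \ref{extensionlemma} and right exactness, which avoids the extra commutator terms $f_\alpha^{-n}\fbar_\alpha^{-1}\otimes \overline{e}_\alpha v_i$, etc., that appear when $v_i$ is not a highest weight vector and that your induction on $n$ does not obviously absorb). The serious problems are in (b) and (c). Your plan is to reduce to highest weight modules ``in the style of the argument concluding the proof of Theorem \ref{maintheorem}'' using the Five Lemma. That argument works there because \emph{both} functors $I$ and $R$ are exact. Here you only have right exactness of $T_\alpha$ and left exactness of $G_\alpha$, so the composites $G_\alpha T_\alpha$ and $T_\alpha G_\alpha$ are not known to send a short exact sequence to an exact row, and the Five Lemma simply does not apply. (One can in fact show $T_\alpha$ is exact on $\calO_\epsilon^\mu$, but only as a consequence of the freeness of every $M\in\calO_\epsilon^\mu$ over $U(\langle f_\alpha,\fbar_\alpha\rangle)$ --- Lemma \ref{freemodulelemma} --- which you never invoke; and even then $T_\alpha G_\alpha$ applied to a short exact sequence is not exact on the right, so (c) still does not reduce this way.) The paper does not perform this reduction at all: it proves (b) and (c) directly for arbitrary objects.

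Moreover, even in the highest weight case the content of (b) and (c) is not a ``weight-by-weight comparison.'' Injectivity of $\psi_M$ needs the criterion that $f_\alpha^{-i}\fbar_\alpha^{-j}\otimes n=0$ in $S_\alpha\otimes_U M$ if and only if $n=f_\alpha^i\cdot n_1+\fbar_\alpha^j\cdot n_2$ (Lemma \ref{tensorzerolemma}), which rests on constructing explicit $U(\mathfrak{a})$-balanced maps and on the freeness/splitting results (Lemma \ref{freemodulelemma}, Corollary \ref{splittingcor}); surjectivity of $\psi_M$ needs Lemma \ref{elementformlemma} plus this same vanishing criterion to normalise an arbitrary $g\in G_\alpha T_\alpha M$ to the form $g(f_\alpha^{-i}\fbar_\alpha^{-j})=f_\alpha^{-i}\fbar_\alpha^{-j}\otimes m$. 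Surjectivity and injectivity of $\epsilon_N$ need a way to manufacture elements of $G_\alpha N$ with prescribed values on the $f_\alpha^{-i}\fbar_\alpha^{-j}$ (Lemma \ref{homconstruction}), whose key step is solving $e_\alpha\cdot x=y$, $\overline{e}_\alpha\cdot x=z$ whenever $\overline{e}_\alpha\cdot y=e_\alpha\cdot z$; this uses the simplicity of the relevant Takiff $\mathfrak{sl}_2$ Verma modules when $\mu(h_\alpha)\neq 0$. None of these ingredients appears in your sketch, and contrary to your closing remark, these --- not finite generation --- are where the hypothesis $\mu(h_\alpha)\neq 0$ does its essential work.
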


The proof of this lemma requires a series of other results. First, we record the following lemma, which will be useful in the proof of parts (a) and (d):

\begin{lemma}
\label{extensionlemma}
Let $M_1, M_2 \in \calO_\epsilon^\mu$ for some $\mu \in \h^*$, and let $0 \rightarrow M_1 \rightarrow M \rightarrow M_2 \rightarrow 0$ be a short exact sequence of $U(\g_\epsilon)$-modules. Then $M \in \calO_\epsilon^\mu$ if and only if $M$ is $\h$-semisimple.
\end{lemma}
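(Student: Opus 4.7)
\emph{Proof plan.} The forward implication is immediate from Definition~\ref{catOdef}: any module in $\calO_\epsilon^\mu\subseteq\calO_\epsilon$ is $\h$-semisimple by condition $(\calO 2)$. The content lies in the reverse implication, so I assume $M$ is $\h$-semisimple and verify each of the remaining conditions. For finite generation $(\calO 1)$, I take finitely many generators of $M_1$ together with arbitrarily chosen lifts in $M$ of a finite generating set of $M_2$; a one-line diagram chase shows that these generate $M$. For the $\overline{\h}$-condition and the requirement $M = M^{(-,\mu)}$, I observe that for any $m \in M$, the image $\bar m$ in $M_2 = M_2^{(-,\mu)}$ is killed by some $(\overline{h}-\mu(h))^N$, so $(\overline{h}-\mu(h))^N m \in M_1$, which is in turn killed by a further power since $M_1 = M_1^{(-,\mu)}$. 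This gives local nilpotence of $\overline{h}-\mu(h)$ on each element, which simultaneously yields $M = M^{(-,\mu)}$ and, since $\overline{\h}$ is finite-dimensional commutative, the local finiteness of $\overline{\h}$ by a monomial-counting argument in the $(\overline{h}_i - \mu(h_i))$.

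The remaining and more substantive step is local finiteness of $\n_\epsilon$. Here I will exploit $\h$-semisimplicity to make the projection $M \to M_2$ weight-graded, yielding a short exact sequence $0 \to M_1^\lambda \to M^\lambda \to M_2^\lambda \to 0$ for each $\lambda \in \h^*$; in particular each $M^\lambda$ is finite-dimensional and $\supp(M) = \supp(M_1) \cup \supp(M_2)$, so Lemma~\ref{fdweightspaces} places $\supp(M)$ inside a finite union $\bigcup_{\lambda' \in I}\{\lambda' - \gamma : \gamma \in \Z_{\geq 0}\Phi^+\}$. For a weight vector $m \in M^\lambda$, the submodule $U(\n_\epsilon)\cdot m$ lives in weights of the form $\lambda + \gamma$ with $\gamma \in \Z_{\geq 0}\Phi^+$, and the intersection of this upward cone with $\supp(M)$ is finite because for each $\lambda' \in I$ the equation $\gamma + \gamma' = \lambda' - \lambda$ (with $\gamma,\gamma' \in \Z_{\geq 0}\Phi^+$) admits only finitely many solutions, controlled by Kostant's partition function. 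Combined with the finite-dimensionality of each weight space, this makes $U(\n_\epsilon)\cdot m$ finite-dimensional; the case of a general $m$ follows by writing it as a finite sum of weight vectors using $\h$-semisimplicity. The weight-theoretic intersection argument is the only place that requires real input; the rest is bookkeeping from the exact sequence.
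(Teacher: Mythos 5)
Your proposal is correct and follows essentially the same route as the paper's proof: the forward direction is immediate, $(\calO 1)$ and the local nilpotence of $\overline{h}-\mu(h)$ are handled by the same lifting and two-step power arguments, and local finiteness of $\n_\epsilon$ is deduced from the weight-space decomposition $\dim(M^\lambda)=\dim(M_1^\lambda)+\dim(M_2^\lambda)$ together with the boundedness of $\supp(M)$. The only difference is that you spell out the Kostant-partition-function finiteness argument that the paper leaves implicit.
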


\begin{proof}
By the definition of $\calO_\epsilon^\mu$, if $M \in \calO^\mu$ then $M$ must be $\h$-semisimple. On the other hand, suppose $M$ is $\h$-semisimple. Then to show $M \in \calO_\epsilon^\mu$, we must show:

(i) $M$ is finitely generated.

(ii) For all $h \in \h$, $\overline{h} - \mu(h)$ acts locally nilpotently on $M$.

(iii) The subalgebra $\n_\epsilon \subseteq \g_\epsilon$ acts locally nilpotently on $M$. 

For (i), let $X_1$ be a finite generating set for $M_1$ and let $X_2$ be a finite generating set for $M_2$. Let $X_2'$ be a set containing a choice of preimage for each element of $X_2$. Then $X_1 \cup X_2'$ is a finite generating set for $M$. For (ii), let $m \in M$. Then for some $i \geq 0$, $(\overline{h} - \mu(h))^i \cdot (m + M_1) = 0 \in M/M_1 \cong M_2$, so $(\overline{h} - \mu(h))^i \cdot m = m' \in M_1$, and for some $j \geq 0$, $(\overline{h} - \mu(h))^j \cdot m' = 0$, so we have $(\overline{h} - \mu(h))^{i+j} \cdot m = 0$. Finally, we have that $\dim(M^\lambda) = \dim(M_1^\lambda) + \dim(M_2^\lambda)$ for any $\lambda \in \h^*$, so in particular the weight spaces of $M$ are bounded above. Hence (iii) holds.
\end{proof}

\begin{proof}[Proof of \ref{maintwistinglemma}(a)]

Let $M \in \calO_\epsilon^\mu$, and let $0 = M_0 \subseteq M_1 \subseteq \dots \subseteq M_{k-1} \subseteq M_k = M$ be a filtration of $M$ such that each section is a highest weight module and this filtration has minimal possible length. Such a filtration exists by Lemma \ref{finitefiltration}. We have an exact sequence:
\[0 \rightarrow M_1 \rightarrow M \rightarrow M/M_1 \rightarrow 0\]
and since $T_\alpha$ is right exact, we have that
\[T_\alpha M_1 \rightarrow T_\alpha M \rightarrow T_\alpha(M/M_1) \rightarrow 0\]
is exact. In particular, $T_\alpha M$ is an extension of $T_\alpha(M/M_1)$ by a quotient of $T_\alpha M_1$. Since $T_\alpha M$ is $\h$-semisimple, and a quotient of a highest weight module is still highest weight, by induction and Lemma \ref{extensionlemma} it suffices to show that if $M$ is highest weight of weight $(\lambda, \mu)$, then $T_\alpha M$ is highest weight of weight $(\lambda', s_\alpha(\mu))$ for some $\lambda' \in \h^*$.

Suppose $M$ is highest weight of weight $(\lambda, \mu)$, and let $v \in M$ be a highest weight generator of $M$. Then we claim that $f_\alpha^{-1} \fbar_\alpha^{-1} \otimes v$ is highest weight and generates $T_\alpha M$. We now verify this element is highest weight of weight $(s_\alpha(\lambda) - 2\alpha, s_\alpha(\mu))$.

Let $\beta \in \Phi^+ \backslash \{\alpha\}$. Then using relations $(5.9)$-$(5.12)$ in Lemma \ref{relationslemma} and the fact $v$ is highest weight: 
\begin{align*}
e_\beta \cdot_\alpha (f_\alpha^{-1} \fbar_\alpha^{-1} \otimes v) =& (e_{s_\alpha(\beta)} f_\alpha^{-1} \fbar_\alpha^{-1}) \otimes v \\
=& (f_\alpha^{-1} e_{s_\alpha(\beta)} - k_1 f_\alpha^{-2} e_{s_\alpha(\beta) - \alpha} + k_2 f_\alpha^{-3} e_{s_\alpha(\beta) - 2\alpha} - k_3 f_\alpha^{-4} e_{s_\alpha(\beta) - 3\alpha}) \fbar_\alpha^{-1} \otimes v\\
=& (f_\alpha^{-1} \fbar_\alpha^{-1} e_{s_\alpha(\beta)} - (l_1 f_\alpha^{-2} \fbar_\alpha^{-1} + l_2 f_\alpha^{-1} \fbar_\alpha^{-2}) e_{s_\alpha(\beta) - \alpha} \\
& + (l_3 f_\alpha^{-3} \fbar_\alpha^{-1} + l_4 f_\alpha^{-2} \fbar_\alpha^{-2}) e_{s_\alpha(\beta) - 2\alpha} \\
& - (l_5 f_\alpha^{-4} \fbar_\alpha^{-1} + l_6 f_\alpha^{-3} \fbar_\alpha^{-2})  e_{s_\alpha(\beta) - 3\alpha} ) \otimes v\\
=& 0\\
\overline{e}_\beta \cdot_\alpha (f_\alpha^{-1} \fbar_\alpha^{-1} \otimes v) &= (\overline{e}_{s_\alpha(\beta)} f_\alpha^{-1} \fbar_\alpha^{-1}) \otimes v \\
=& (f_\alpha^{-1} \overline{e}_{s_\alpha(\beta)} - k_1 f_\alpha^{-2} \overline{e}_{s_\alpha(\beta) - \alpha} + k_2 f_\alpha^{-3} \overline{e}_{s_\alpha(\beta) - 2\alpha} - k_3 f_\alpha^{-4} \overline{e}_{s_\alpha(\beta) - 3\alpha}) \fbar_\alpha^{-1} \otimes v\\
=& (f_\alpha^{-1} \fbar_\alpha^{-1} \overline{e}_{s_\alpha(\beta)} - k_1 f_\alpha^{-2} \fbar_\alpha^{-1} \overline{e}_{s_\alpha(\beta) - \alpha} + k_2 f_\alpha^{-3} \fbar_\alpha^{-1} \overline{e}_{s_\alpha(\beta) - 2\alpha} \\
&- k_3 f_\alpha^{-4} \fbar_\alpha^{-1} \overline{e}_{s_\alpha(\beta) - 3\alpha}) \otimes v\\
=& 0
\end{align*}
for some $k_i, l_i \in \C$. We also have:
\begin{align*}
e_\alpha \cdot_\alpha (f_\alpha^{-1} \fbar_\alpha^{-1} \otimes v) &= (f_\alpha f_\alpha^{-1} \fbar_\alpha^{-1}) \otimes v  = 0\\
\overline{e}_\alpha \cdot_\alpha (f_\alpha^{-1} \fbar_\alpha^{-1} \otimes v) &= (\fbar_\alpha f_\alpha^{-1} \fbar_\alpha^{-1}) \otimes v = 0
\end{align*}
and for any $h \in \h$, by relations $(5.7)$ and $(5.8)$ in Lemma \ref{relationslemma} we have:
\begin{align*}
\overline{h} \cdot_\alpha (f_\alpha^{-1} \fbar_\alpha^{-1} \otimes v) &= (\overline{\phi_\alpha(h)} f_\alpha^{-1} \fbar_\alpha^{-1} \otimes v) \\
&= f_\alpha^{-1} \overline{\phi_\alpha(h)} \fbar_\alpha^{-1} \otimes v + f_\alpha^{-2} \overline{\phi_\alpha(h)} \otimes v \\
&= f_\alpha^{-1} \fbar_\alpha^{-1} \otimes (\overline{\phi_\alpha(h)} \cdot v) \\
&= f_\alpha^{-1} \fbar_\alpha^{-1} \otimes s_\alpha(\mu)(h) v \\
&= s_\alpha(\mu)(h) (f_\alpha^{-1} \fbar_\alpha^{-1} \otimes v)
\end{align*}

Finally, by calculation (5.15) earlier showing that if $w \in M^{\lambda'}$ then $f_\alpha^{-n} \fbar_\alpha^{-m} \otimes w \in (T_\alpha M)^{s_\alpha(\lambda') - (n+m)\alpha}$, we see that $f_\alpha^{-1} \fbar_\alpha^{-1} \otimes v \in (T_\alpha M)^{s_\alpha(\lambda) - 2\alpha}$, so $f_\alpha^{-1} \fbar_\alpha^{-1} \otimes v$ is indeed highest weight of weight $(s_\alpha(\lambda) - 2\alpha, s_\alpha(\mu))$ as claimed.

To see $f_\alpha^{-1} \fbar_\alpha^{-1} \otimes v$ generates $T_\alpha M$, we observe that for any $v' \in M$, $v' = u \cdot v$ for some $u \in U$, and so for any $n, m > 0$, we have:
\[f_\alpha^{-n} \fbar_\alpha^{-m} \otimes v' = f_\alpha^{-n} \fbar_\alpha^{-m} \otimes (u \cdot v) = f_\alpha^{-n} \fbar_\alpha^{-m} u \otimes v = \sum u_i f_\alpha^{-n_i} \fbar_\alpha^{-m_i} \otimes v = \sum u_i \cdot (f_\alpha^{-n_i} \fbar_\alpha^{-m_i} \otimes v)\]
for some $n_i, m_i > 0$ and $u_i \in U$. Hence, since $T_\alpha M$ is spanned by $\{f_\alpha^{-n} \fbar_\alpha^{-m} \otimes v' : n,m > 0, v' \in M\}$, it is enough to show that for any $n, m >0$, the element $f_\alpha^{-n} \fbar_\alpha^{-m} \otimes v$ lies in the submodule of $T_\alpha M$ generated by $f_\alpha^{-1} \fbar_\alpha^{-1} \otimes v$. Consider $f_\alpha \cdot_\alpha (f_\alpha^{-1} \fbar_\alpha^{-1} \otimes v) = (e_\alpha f_\alpha^{-n} \fbar_\alpha^{-m}) \otimes v$ and $\fbar_\alpha \cdot_\alpha (f_\alpha^{-n} \fbar_\alpha^{-m} \otimes v) = (\overline{e}_\alpha f_\alpha^{-n} \fbar_\alpha^{-m}) \otimes v$. By relations $(5.1)$-$(5.4)$, we have:
\begin{align*}
(e_\alpha f_\alpha^{-n} \fbar_\alpha^{-m}) \otimes v &= (e_\alpha \fbar_\alpha^{-m} f_\alpha^{-n}) \otimes v \\
&= (\fbar_\alpha^{-m} e_\alpha f_\alpha^{-n} - m \fbar_\alpha^{-m - 1} \overline{h}_\alpha f_\alpha^{-n}) \otimes v \\
&= (\fbar_\alpha^{-m} e_\alpha f_\alpha^{-n} - m \mu(h_\alpha) \fbar_\alpha^{-m - 1} f_\alpha^{-n} - 2mn \fbar_\alpha^{-m} f_\alpha^{-n - 1}) \otimes v \\
&= (k - 2mn) f_\alpha^{-n - 1} \fbar_\alpha^{-m} \otimes v - m \mu(h_\alpha) f_\alpha^{-n} \fbar_\alpha^{-m - 1} \otimes v,
\end{align*}
for some $k \in \C$. In particular, the coefficient of $f_\alpha^{-n} \fbar_\alpha^{-m-1} \otimes v$ is always non-zero. We also have:
\begin{align*}
(\overline{e}_\alpha f_\alpha^{-n} \fbar_\alpha^{-m}) \otimes v &= (f_\alpha^{-n} \overline{e}_\alpha \fbar_\alpha^{-m} - 2n f_\alpha^{-n - 2} \fbar_\alpha^{-m+1} -  n f_\alpha^{-n-1} \fbar_\alpha^{-m} \overline{h}_\alpha  - n(n-1) f_\alpha^{-n - 2} \fbar_\alpha^{-m+1}) \otimes v \\
&= -n \mu(h_\alpha) f_\alpha^{-n - 1} \fbar_\alpha^{-m} \otimes v - n(n+1) f_\alpha^{-n - 2} \fbar_\alpha^{-m+1} \otimes v.
\end{align*}
In particular, the coefficients of $f_\alpha^{-n-1} \fbar_\alpha^{-m} \otimes v$ and $f_\alpha^{-n-2} \fbar_\alpha^{-m+1} \otimes v$ are both non-zero. Let $N \subseteq T_\alpha M$ be the submodule generated by $f_\alpha^{-1} \fbar_\alpha^{-1} \otimes v$. Then by the second calculation with $m = 1$, we see that $f_\alpha^{-n} \fbar_\alpha^{-1} \otimes v \in N$ for all $n > 0$. By the first calculation, we that if $f_\alpha^{-n'} \fbar_\alpha^{-m} \otimes v \in N$ for all $n' > 0$, then $f_\alpha^{-n} \fbar_\alpha^{-m-1} \otimes v \in N$ for all $n > 0$, which completes the proof that $N = T_\alpha M$ by induction.
\end{proof}

We now aim to prove part (b). First, we prove the following results on the invariants of a module $M \in \calO_\epsilon^\mu$ with respect to a certain subalgebra of $\g$, and then prove a result on the tensor product $S_\alpha \otimes_U M$. Here we let ${(\mathfrak{sl}_2)}_\alpha = \langle e_\alpha, h_\alpha, f_\alpha \rangle \subseteq \g$. 

\begin{lemma}
\label{freemodulelemma}
Let $\mu \in \h^*$ such that $\mu(h_\alpha) \neq 0$, let $M \in \calO_\epsilon^\mu$, and let $\mathfrak{a} = \langle f_\alpha, \fbar_\alpha \rangle \subseteq \g_\epsilon$. Then $M$ is free as a $U(\mathfrak{a})$-module, and any vector space basis of $M^{\langle e_\alpha, \overline{e}_\alpha \rangle}$ is a free generating set.
\end{lemma}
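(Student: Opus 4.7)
Since $[f_\alpha, \fbar_\alpha] = 0$ in $\g_\epsilon$, the subalgebra $\mathfrak{a}$ is abelian and $U(\mathfrak{a}) \cong \C[f_\alpha, \fbar_\alpha]$. Writing $V := M^{\langle e_\alpha, \overline{e}_\alpha \rangle}$, the plan is to show that the canonical $U(\mathfrak{a})$-linear, $\h$-weight-graded map
\[
\pi : U(\mathfrak{a}) \otimes_\C V \longrightarrow M, \qquad u \otimes v \longmapsto u \cdot v
\]
is an isomorphism, where $f_\alpha$ and $\fbar_\alpha$ are declared to have $\h$-weight $-\alpha$. Since $\pi$ respects the decomposition into $\alpha$-strings, it suffices to work one $\alpha$-string $\bigoplus_{n \in \Z} M^{\lambda + n\alpha}$ at a time, and by Lemma \ref{fdweightspaces} each such string has finite dimensional weight spaces and is bounded above in $n$. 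Throughout only the Takiff-$\mathfrak{sl}_2$ subalgebra $\langle e_\alpha, h_\alpha, f_\alpha, \overline{e}_\alpha, \overline{h}_\alpha, \fbar_\alpha \rangle$ will act, and the crucial ingredient is that $\mu(h_\alpha) \neq 0$ combined with local nilpotence of $\overline{h}_\alpha - \mu(h_\alpha)$ forces $\overline{h}_\alpha$ to act as an invertible operator on every weight space of $M$.

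To prove $\pi$ is surjective I would run a downward induction along the $\alpha$-string. The top nonzero weight space sits inside $V$ automatically, because $e_\alpha$ and $\overline{e}_\alpha$ both raise the $\h$-weight by $\alpha$. For the inductive step, given $m \in M^{\lambda + n\alpha}$, the elements $e_\alpha m$ and $\overline{e}_\alpha m$ belong to $M^{\lambda + (n+1)\alpha} \subseteq U(\mathfrak{a}) \cdot V$ by the inductive hypothesis, and the task reduces to producing $m' \in U(\mathfrak{a}) \cdot V \cap M^{\lambda + n\alpha}$ with $e_\alpha m' = e_\alpha m$ and $\overline{e}_\alpha m' = \overline{e}_\alpha m$; then $m - m' \in V \subseteq U(\mathfrak{a}) \cdot V$ finishes the inductive step. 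I would look for $m'$ as a sum of terms $f_\alpha v + \fbar_\alpha v'$ with $v, v' \in U(\mathfrak{a}) \cdot V$ of the appropriate weight, and use the commutation relations $[e_\alpha, f_\alpha] = h_\alpha$, $[e_\alpha, \fbar_\alpha] = [\overline{e}_\alpha, f_\alpha] = \overline{h}_\alpha$, and $[\overline{e}_\alpha, \fbar_\alpha] = 0$ to reduce the existence of $m'$ to a triangular linear system whose diagonal is built from the invertible operator $\overline{h}_\alpha$. Solving this system order-by-order in the generalised $\overline{h}_\alpha$-eigenspace filtration then yields $m'$.

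Injectivity will go by contradiction using the same relations in the opposite direction. Given a nontrivial relation $\sum_{i,j \geq 0} f_\alpha^i \fbar_\alpha^j v_{ij} = 0$ with $v_{ij} \in V$ weight vectors, I may split by $\h$-weight to assume all $v_{ij}$ share a common $\h$-weight, and take the relation with the minimum number of nonzero terms. Choosing $(I,J)$ with $v_{IJ} \neq 0$ and $I + J$ maximal and applying $e_\alpha^I \overline{e}_\alpha^J$, an expansion of the commutators shows that the sole top-degree surviving contribution is a nonzero scalar multiple of $\overline{h}_\alpha^{I+J} v_{IJ}$, while every other term is a relation among strictly fewer of the remaining $v_{ij}$'s and can be absorbed into the minimal relation. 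Invertibility of $\overline{h}_\alpha$ then forces $v_{IJ} = 0$, contradicting the choice of $(I,J)$.

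The main obstacle is the construction of $m'$ in the surjectivity step: the operators $e_\alpha$ and $\overline{e}_\alpha$ couple neighbouring monomials $f_\alpha^i \fbar_\alpha^j$ rather than acting diagonally on the decomposition, so one must solve a mixed linear system instead of recovering each summand independently. The system becomes triangular only after imposing the generalised $\overline{h}_\alpha$-eigenspace filtration, and the hypothesis $\mu(h_\alpha) \neq 0$ is exactly what supplies the invertible diagonal that makes the system solvable; the injectivity step then exploits the same invertibility in a dual manner.
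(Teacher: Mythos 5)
Your strategy --- proving directly that the multiplication map $U(\mathfrak{a}) \otimes_\C M^{\langle e_\alpha, \overline{e}_\alpha \rangle} \to M$ is an isomorphism using only the Takiff-$\mathfrak{sl}_2$ commutation relations and the invertibility of $\overline{h}_\alpha$ on each finite-dimensional weight space --- is genuinely different from the paper's. The paper restricts to the subalgebra $((\mathfrak{sl}_2)_\alpha)_\epsilon$, uses \cite[Theorem 7.1]{W} to know that the relevant Verma modules are simple, filters an indecomposable module by Verma modules (each free over $U(\mathfrak{a})$ by construction), and concludes with a weight-space dimension count. Your route is more self-contained and correctly isolates invertibility of $\overline{h}_\alpha$ as the engine, but both of your key steps are flawed as stated.

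For surjectivity, the linear system you set up is overdetermined, not triangular: you impose the two conditions $e_\alpha m' = e_\alpha m$ and $\overline{e}_\alpha m' = \overline{e}_\alpha m$ on a single family of unknowns $w_{ij}$, and passing to the generalised $\overline{h}_\alpha$-eigenspace filtration does not dispose of the surplus equations. Worse, since the kernel of $x \mapsto (e_\alpha x, \overline{e}_\alpha x)$ on $M^{\lambda+n\alpha}$ is exactly $V \cap M^{\lambda+n\alpha} \subseteq U(\mathfrak{a})V$, finding such an $m'$ inside $U(\mathfrak{a})V$ is \emph{equivalent} to the assertion $m \in U(\mathfrak{a})V$ you are proving, so the reformulation buys nothing until the system is actually solved. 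A repair that works: first solve $\overline{e}_\alpha m_1' = \overline{e}_\alpha m$ alone with $m_1' \in f_\alpha U(\mathfrak{a})V$, using $\overline{e}_\alpha f_\alpha^i \fbar_\alpha^j w = i f_\alpha^{i-1}\fbar_\alpha^j \overline{h}_\alpha w - i(i-1) f_\alpha^{i-2}\fbar_\alpha^{j+1} w$; this subsystem \emph{is} triangular with diagonal $i\,\overline{h}_\alpha$. Then $y' := e_\alpha(m - m_1')$ lies in $U(\mathfrak{a})V$ and is killed by $\overline{e}_\alpha$, hence (granting injectivity, which must therefore be proved first) lies in $\C[\fbar_\alpha]V$, and one solves $e_\alpha m_2' = y'$ with $m_2' \in \C[\fbar_\alpha]V$ via $e_\alpha \fbar_\alpha^{j} w = j \fbar_\alpha^{j-1}\overline{h}_\alpha w$.

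For injectivity, applying $e_\alpha^I \overline{e}_\alpha^J$ does not isolate $\overline{h}_\alpha^{I+J} v_{IJ}$. Already for the relation $f_\alpha v_{10} + \fbar_\alpha v_{01} = 0$, applying $e_\alpha$ yields $h_\alpha v_{10} + \overline{h}_\alpha v_{01} = 0$: both top-degree terms survive, and the coefficient of $v_{10}$ is $h_\alpha$, which need not be invertible. The correct move is to apply $\overline{e}_\alpha^{D}$ with $D$ the maximal total degree: since $\overline{e}_\alpha$ commutes with $\fbar_\alpha$ and $[\overline{e}_\alpha, f_\alpha^i] = i f_\alpha^{i-1}\overline{h}_\alpha - i(i-1) f_\alpha^{i-2}\fbar_\alpha$, one checks that $\overline{e}_\alpha^{a+b}(f_\alpha^a \fbar_\alpha^b w) = 0$ whenever $b \geq 1$, so $\overline{e}_\alpha^{D}$ kills every term except $f_\alpha^{D} v_{D,0}$ and produces $D!\,\overline{h}_\alpha^{D} v_{D,0} = 0$, whence $v_{D,0} = 0$; the remaining $v_{ij}$ with $i+j = D$ are then peeled off by applying $\overline{e}_\alpha^{D-1} e_\alpha$, $\overline{e}_\alpha^{D-2} e_\alpha^{2}$, and so on, before inducting on $D$. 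With these two repairs your argument goes through and gives a legitimate alternative to the paper's proof.
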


\begin{proof}
It clearly suffices to only consider the case where $M$ is indecomposable.

Consider the case where $\g = \mathfrak{sl}_2$ and $M$ is a Verma module. In this case, $M^{\langle e_\alpha, \overline{e}_\alpha \rangle}$ is one dimensional, spanned by any highest weight vector of $M$. But by definition $M$ is generated freely by this as a $U((\n^-)_\epsilon)$-module, and in this case $\mathfrak{a} = \n^-$ so the lemma holds.

If $\g = \mathfrak{sl}_2$ and $M$ is indecomposable but not a Verma module, then by earlier results $M$ has a filtration $0 = M_0 \subseteq M_1 \subseteq \dots \subseteq M_{k-1} \subseteq M_k = M$ such that each quotient is a Verma module. To show any basis generates $M$ as a $U(\mathfrak{a})$-module, it suffices to check one choice.

Choose $\Omega = \Psi \cup \{v\}$, where $\Psi$ is a basis of $M_{k-1}^{\langle e_\alpha, \overline{e}_\alpha \rangle}$ and $v \in M^{\langle e_\alpha, \overline{e}_\alpha \rangle} \backslash M_{k-1}$. Let $M' \subseteq M$ be the $U(\mathfrak{a})$-submodule generated by $\Omega$. Then observe that $M/M_{k-1}$ is a Verma module, with highest weight generator $v + M_{k-1}$. Hence for any $m \in M$, there is some $m' \in M'$ such that $m - m' \in M_{k-1}$. But $\Psi \subseteq \Omega$, and by induction $\Psi$ generates $M_{k-1}$ as a $U(\mathfrak{a})$-module, so $M_{k-1} \subseteq M'$. Hence $m - m' \in M'$, and so $m \in M'$ for any $m \in M$, i.e. $M = M'$.

Now we show that $\Omega$ generates $M$ freely, i.e. that the set $\Omega' = \{f_\alpha^i \fbar_\alpha^j v : i, j \geq 0, v \in \Omega\}$ is linearly independent. We already know that $\Omega'$ spans $M$, so it is enough to show that the number of $f_\alpha^i \fbar_\alpha^j b$ of weight $\lambda'$ is equal to the dimension of $M^{\lambda'}$ for any $\lambda' \in \h^*$. Now, every quotient $M_i/M_{i-1}$ is isomorphic to the same Verma module $M_{\lambda, \mu}$ and hence all $v \in \Omega$ have weight $\lambda$ and also $\dim(M^{\lambda'}) = k \dim(M_{\lambda, \mu}^{\lambda'})$. These two facts together imply that the number of $f_\alpha^i \fbar_\alpha^j v$ of weight $\lambda'$ is equal to the dimension of $M^{\lambda'}$, so $\Omega$ generates $M$ freely and the lemma holds in the case $\g = \mathfrak{sl}_2$.

Finally, we deal with the case where $\g$ is any reductive Lie algebra. Let $\Upsilon$ be a basis of $M^{\langle e_\alpha, \overline{e}_\alpha \rangle}$. Let $m \in M$ and let $N$ be the $((\mathfrak{sl}_2)_\alpha)_\epsilon$-submodule of $M$ generated by $m$. It is easy to check that $N \in \calO^{\mu(h_\alpha)}((\mathfrak{sl}_2)_\alpha)$, since all the axioms except finite generation follow from the fact $M \in \calO^\mu$, while by definition $N$ is generated by one element. Hence we can apply the lemma in the $\mathfrak{sl}_2$ case to see that $m = \sum_k f_\alpha^{i_k} \fbar_\alpha^{j_k} m_k$, for some $i_k, j_k \geq 0$ and $m_k \in N^{\langle e_\alpha, \overline{e}_\alpha \rangle} \subseteq M^{\langle e_\alpha, \overline{e}_\alpha \rangle} = \vspan \Upsilon$, so $m$ is in the $U(\mathfrak{a})$-module generated by $\Upsilon$. 

Suppose $\Upsilon$ does not generate $M$ freely as a $U(\mathfrak{a})$-module. Then $\sum_k f_\alpha^{i_k} \fbar_\alpha^{j_k} b_k = 0$ for some $i_k, j_k \geq 0$ and $b_k \in \Upsilon$. But since this sum is finite, we can let $N$ be the $((\mathfrak{sl}_2)_\alpha)_\epsilon$-module generated by $\{b_1, \dots, b_n\}$ and obtain a contradiction to the $\mathfrak{sl}_2$ case. Hence the lemma holds.
\end{proof}

\begin{corollary}
\label{splittingcor}
Let $\mu$ be such that $\mu(h_\alpha) \neq 0$, let $M \in \calO_\epsilon^\mu$, and let $N_1$ be a finitely generated $((\mathfrak{sl}_2)_\alpha)_\epsilon$-submodule of $M$. Then there exists a $U(\mathfrak{a})$-submodule $N_2$ of $M$ such that $M = N_1 \oplus N_2$, where again $\mathfrak{a} = \langle f_\alpha, \fbar_\alpha \rangle$.
\end{corollary}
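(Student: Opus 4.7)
My plan is to deduce the corollary almost immediately from Lemma \ref{freemodulelemma} by choosing a compatible pair of free generating sets for $N_1$ and $M$ as $U(\mathfrak{a})$-modules.

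First I would check that $N_1$, viewed as a $((\mathfrak{sl}_2)_\alpha)_\epsilon$-module, lies in $\calO_\epsilon^{\mu(h_\alpha)}((\mathfrak{sl}_2)_\alpha)$: finite generation is given; semisimplicity of $h_\alpha$ and local finiteness of $\overline{h}_\alpha$, $e_\alpha$ and $\overline{e}_\alpha$ are all inherited from $M \in \calO_\epsilon^\mu(\g)$; and the generalised $\overline{h}_\alpha$-eigenvalue on $N_1$ is $\mu(h_\alpha)$ since this holds on all of $M$. So Lemma \ref{freemodulelemma} applies to $N_1$ as well as to $M$.

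Now I would pick a vector space basis $\Upsilon_1$ of the subspace $N_1^{\langle e_\alpha, \overline{e}_\alpha\rangle}$ and, since this is a subspace of $M^{\langle e_\alpha, \overline{e}_\alpha\rangle}$, extend it to a vector space basis $\Upsilon = \Upsilon_1 \cup \Upsilon_2$ of $M^{\langle e_\alpha, \overline{e}_\alpha\rangle}$. By Lemma \ref{freemodulelemma} applied to $N_1$, the set $\Upsilon_1$ is a free $U(\mathfrak{a})$-generating set for $N_1$, so $N_1 = U(\mathfrak{a})\Upsilon_1$; and by the same lemma applied to $M$, the set $\Upsilon$ is a free $U(\mathfrak{a})$-generating set for $M$, giving the internal direct sum decomposition
\[ M \;=\; U(\mathfrak{a})\Upsilon_1 \,\oplus\, U(\mathfrak{a})\Upsilon_2 \;=\; N_1 \oplus N_2, \]
where $N_2 := U(\mathfrak{a})\Upsilon_2$ is visibly a $U(\mathfrak{a})$-submodule of $M$.

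There is essentially no obstacle here: all the real work was done in establishing Lemma \ref{freemodulelemma}. The only minor subtlety worth stating carefully is why Lemma \ref{freemodulelemma} can be applied to $N_1$, i.e.\ the verification that $N_1$ lies in the appropriate category $\calO_\epsilon^{\mu(h_\alpha)}((\mathfrak{sl}_2)_\alpha)$; this parenthetical point was already observed inside the proof of Lemma \ref{freemodulelemma}, so it requires no new argument.
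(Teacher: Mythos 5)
Your argument is correct and is essentially identical to the paper's proof: both extend a basis of $N_1^{\langle e_\alpha, \overline{e}_\alpha\rangle}$ to one of $M^{\langle e_\alpha, \overline{e}_\alpha\rangle}$ and apply Lemma \ref{freemodulelemma} to both $N_1$ and $M$. Your explicit verification that $N_1$ lies in $\calO_\epsilon^{\mu(h_\alpha)}((\mathfrak{sl}_2)_\alpha)$ is a point the paper leaves implicit (though the same observation appears inside the proof of Lemma \ref{freemodulelemma}), so including it is a mild improvement rather than a deviation.
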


\begin{proof}
Let $\Psi$ be a basis for $N_1^{\langle e_\alpha, \overline{e}_\alpha \rangle}$, which freely generates $N_1$ as a $U(\mathfrak{a})$-module by the previous lemma. Since $N_1^{\langle e_\alpha, \overline{e}_\alpha \rangle} \subseteq M^{\langle e_\alpha, \overline{e}_\alpha \rangle}$, we can extend $\Psi$ to a basis $\Omega$ for $M^{\langle e_\alpha, \overline{e}_\alpha \rangle}$, so if we let $N_2$ be the $U(\mathfrak{a})$-module generated by $\Omega \backslash \Psi$ then by the previous lemma we have $M = N_1 \oplus N_2$.
\end{proof}

We now recall the following general result about tensor products. Let $R$ be a ring, and let $M$ and $N$ be right and left $R$-modules respectively, and let $V$ be a $\C$-vector space. We say a map $\varphi: M \times N \rightarrow V$ is $R$-balanced if it is $\C$-bilinear and for any $m \in M$, $n \in N$, and $r \in R$, we have $\varphi(m \cdot r, n) = \varphi(m, r \cdot n)$. We then have the following standard result on the tensor product $M \otimes_R N$:
\begin{lemma}
\label{balancedmapslemma}
The element $m \otimes n$ of $M \otimes_R N$ is zero if and only if for any vector space $V$ and $R$-balanced map $\varphi: M \times N \rightarrow V$, we have that $\varphi(m, n) = 0$.
\end{lemma}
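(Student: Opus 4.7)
The plan is to deduce this from the universal property of the tensor product, which characterises $M \otimes_R N$ by the property that $R$-balanced maps out of $M \times N$ correspond bijectively to $\C$-linear maps out of $M \otimes_R N$. The statement is a direct (and essentially formal) consequence, so the proof should be brief.

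First I would dispatch the forward direction. Suppose $m \otimes n = 0$, and let $\varphi : M \times N \to V$ be any $R$-balanced map. By the universal property, $\varphi$ factors through the canonical map $\iota : M \times N \to M \otimes_R N$, $(m', n') \mapsto m' \otimes n'$, i.e.\ there exists a $\C$-linear $\tilde{\varphi} : M \otimes_R N \to V$ with $\tilde{\varphi} \circ \iota = \varphi$. Then $\varphi(m, n) = \tilde{\varphi}(m \otimes n) = \tilde{\varphi}(0) = 0$.

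For the reverse direction, I would simply observe that the canonical map $\iota$ is itself $R$-balanced: it is $\C$-bilinear by construction of the tensor product, and $\iota(m' \cdot r, n') = (m' r) \otimes n' = m' \otimes (r n') = \iota(m', r \cdot n')$ by the defining relations. So if $\varphi(m,n) = 0$ for every $R$-balanced $\varphi$, we may take $\varphi = \iota$ and conclude $m \otimes n = \iota(m, n) = 0$.

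There is no real obstacle here: the only thing that needs to be stated carefully is that the universal property is being invoked on both sides, and that the tautological map $\iota$ is itself among the maps being quantified over. This makes the ``only if'' direction trivial once one writes it down. Because the result is standard, I would keep the write-up to a few lines and simply cite the universal property (or alternatively a reference such as the relevant section of a standard textbook on modules).
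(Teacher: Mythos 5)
Your proof is correct, and it is the standard argument: the paper states this lemma as a recalled fact without giving any proof, so there is nothing to compare against beyond noting that the universal property of $\otimes_R$ is exactly the intended justification. The only point worth making explicit is that $M \otimes_R N$ is itself a $\C$-vector space (since $R$ is a $\C$-algebra with $\C$ central), so the canonical map $\iota$ genuinely lies among the $R$-balanced maps being quantified over — once that is said, your two directions go through as written.
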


We observe that if $\mathfrak{a} = \langle f_\alpha, \fbar_\alpha \rangle$ as above and $A = \vspan \{f_\alpha^{-n} \fbar_\alpha^{-m}: n, m > 0 \} \subseteq S_\alpha$, which is a $U(\mathfrak{a})$-$U(\mathfrak{a})$-subbimodule of $S_\alpha$, then we have (by considering the two bases of $S_\alpha$ given when we defined it) that as $U$-$U(\mathfrak{a})$-bimodules:
\[S_\alpha \cong U \otimes_{U(\mathfrak{a})} A,\]
and as $U(\mathfrak{a})$-$U$ bimodules:
\[S_\alpha \cong A \otimes_{U(\mathfrak{a})} U.\]
In particular, for any left $U$-module $M$, we have that as left $U(\mathfrak{a})$-modules:
\begin{align*}
S_\alpha \otimes_U M \cong (A \otimes_{U(\mathfrak{a})} U) \otimes_U M \cong A \otimes_{U(\mathfrak{a})} M.
\end{align*}

The following lemma, which we will prove using Corollary \ref{splittingcor}, Lemma \ref{balancedmapslemma} and the above observation, is very useful in the proof of part (b) of Lemma \ref{maintwistinglemma}:

\begin{lemma}
\label{tensorzerolemma}
Let $\mu \in \h^*$ be such that $\mu(h_\alpha) \neq 0$ and let $M \in \calO_\epsilon^\mu$. Let $n \in M^\lambda \backslash \{0\}$ for some $\lambda \in \h^*$. Then the following are equivalent:
\begin{enumerate}
\item[(a)] In $T_\alpha M$, we have $f_\alpha^{-i} \fbar_\alpha^{-j} \otimes n = 0$.

\item[(b)] For any vector space $V$ and $U(\mathfrak{a})$-balanced map $\varphi: A \times M \rightarrow V$ we have $\varphi(f_\alpha^{-i} \fbar_\alpha^{-j}, n) = 0$.

\item[(c)] There exist $n_1, n_2 \in M$ such that $n = f_\alpha^i \cdot n_1 + \fbar_\alpha^j \cdot n_2$.
\end{enumerate}
\end{lemma}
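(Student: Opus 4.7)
My plan is to treat the three implications separately. The equivalence (a) $\iff$ (b) is an immediate consequence of Lemma \ref{balancedmapslemma} applied to the isomorphism $S_\alpha \otimes_U M \cong A \otimes_{U(\mathfrak{a})} M$ of left $U(\mathfrak{a})$-modules recorded just before the statement: the simple tensor $f_\alpha^{-i}\fbar_\alpha^{-j} \otimes n$ vanishes in $A \otimes_{U(\mathfrak{a})} M$ precisely when every $U(\mathfrak{a})$-balanced map $\varphi : A \times M \to V$ kills $(f_\alpha^{-i}\fbar_\alpha^{-j}, n)$.

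For (c) $\Rightarrow$ (a), I would first note that $[f_\alpha, \fbar_\alpha] = \overline{[f_\alpha, f_\alpha]} = 0$, so (in the spirit of Lemma \ref{relationslemma}) the elements $f_\alpha^{\pm 1}$ and $\fbar_\alpha^{\pm 1}$ commute in $U_\alpha$. Then if $n = f_\alpha^i n_1 + \fbar_\alpha^j n_2$,
\[
f_\alpha^{-i}\fbar_\alpha^{-j} \otimes n = (f_\alpha^{-i}\fbar_\alpha^{-j} f_\alpha^i) \otimes n_1 + (f_\alpha^{-i}\fbar_\alpha^{-j} \fbar_\alpha^j) \otimes n_2 = \fbar_\alpha^{-j} \otimes n_1 + f_\alpha^{-i} \otimes n_2,
\]
and the two coefficients $\fbar_\alpha^{-j} = f_\alpha^0 \fbar_\alpha^{-j}$ and $f_\alpha^{-i} = f_\alpha^{-i}\fbar_\alpha^0$ vanish in $S_\alpha$ by the convention recorded after the definition of $S_\alpha$.

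The main direction is (a) $\Rightarrow$ (c), for which I would invoke Lemma \ref{freemodulelemma}. Since $\mathfrak{a}$ is abelian, $U(\mathfrak{a}) = \C[f_\alpha, \fbar_\alpha]$, and since $\mu(h_\alpha) \neq 0$ I may fix a free $U(\mathfrak{a})$-basis $\{b_k\}$ of $M$. Writing $n = \sum_k u_k b_k$ with $u_k \in \C[f_\alpha, \fbar_\alpha]$ and identifying $A \otimes_{U(\mathfrak{a})} M \cong \bigoplus_k A$ via $a \otimes (u b_k) \mapsto a \cdot u$ in the $k$-th factor, the hypothesis (a) translates to the condition $f_\alpha^{-i}\fbar_\alpha^{-j} \cdot u_k = 0$ in $A$ for every $k$. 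Now $A$ is the quotient of the Laurent ring $\C[f_\alpha^{\pm 1}, \fbar_\alpha^{\pm 1}]$ by the span of the Laurent monomials $f_\alpha^p \fbar_\alpha^q$ with $p \geq 0$ or $q \geq 0$, so the vanishing of $f_\alpha^{-i}\fbar_\alpha^{-j} u_k$ in $A$ forces every monomial $c_{s,t}^{(k)} f_\alpha^s \fbar_\alpha^t$ occurring in $u_k$ to satisfy $s \geq i$ or $t \geq j$. Partitioning these monomials into the two corresponding classes yields a decomposition $u_k = f_\alpha^i v_k + \fbar_\alpha^j w_k$ with $v_k, w_k \in U(\mathfrak{a})$, and setting $n_1 := \sum_k v_k b_k$, $n_2 := \sum_k w_k b_k$ gives $n = f_\alpha^i n_1 + \fbar_\alpha^j n_2$ as required.

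The main obstacle is precisely this reduction to the polynomial ring $\C[f_\alpha, \fbar_\alpha]$: the combinatorics of $A$ are straightforward once everything is expressed in terms of Laurent monomials, but to pass from $M$ to a direct sum of copies of $U(\mathfrak{a})$ one genuinely needs the freeness supplied by Lemma \ref{freemodulelemma}, which in turn requires the hypothesis $\mu(h_\alpha) \neq 0$.
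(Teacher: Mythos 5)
Your proposal is correct, and for the two easy implications ((a) $\Leftrightarrow$ (b) via Lemma \ref{balancedmapslemma} and the identification $S_\alpha \otimes_U M \cong A \otimes_{U(\mathfrak{a})} M$; (c) $\Rightarrow$ (a) by commuting $f_\alpha^{\pm1}, \fbar_\alpha^{\pm1}$ and using that $f_\alpha^0\fbar_\alpha^{-j}$ and $f_\alpha^{-i}\fbar_\alpha^0$ vanish in $S_\alpha$) it coincides with the paper. For the substantive implication, however, you take a genuinely different route. The paper proves (b) $\Rightarrow$ (c) by contrapositive: given $n$ not of the stated form, it explicitly constructs a $U(\mathfrak{a})$-balanced map $\varphi$ with $\varphi(f_\alpha^{-i}\fbar_\alpha^{-j}, n) \neq 0$, proceeding through a three-stage reduction --- first $\g = \mathfrak{sl}_2$ with $M$ a Verma module (where $\varphi$ is written down on monomials), then indecomposable $\mathfrak{sl}_2$-modules via a highest-weight filtration together with the splitting of Corollary \ref{splittingcor}, then general $\g$ via the $((\mathfrak{sl}_2)_\alpha)_\epsilon$-submodule generated by $n$ and another application of Corollary \ref{splittingcor}. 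You instead prove (a) $\Rightarrow$ (c) directly: Lemma \ref{freemodulelemma} exhibits $M$ as a free module over the polynomial ring $U(\mathfrak{a}) = \C[f_\alpha,\fbar_\alpha]$, which reduces the vanishing of $f_\alpha^{-i}\fbar_\alpha^{-j} \otimes n$ to a coefficient computation in $A$ viewed as a quotient of the Laurent polynomial ring, from which the decomposition $n = f_\alpha^i n_1 + \fbar_\alpha^j n_2$ drops out by sorting monomials. Both arguments use the hypothesis $\mu(h_\alpha) \neq 0$ only through the freeness statement of Lemma \ref{freemodulelemma} (the paper via its Corollary \ref{splittingcor}), so neither is more general; what your version buys is the elimination of the case analysis and of the explicit construction of separating balanced maps, with condition (b) demoted from working tool to formal corollary. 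The one point worth spelling out if you write this up is that the left and right $U(\mathfrak{a})$-actions on $A$ agree (they do, since $[f_\alpha,\fbar_\alpha]=0$), so that the identification $A \otimes_{U(\mathfrak{a})} M \cong \bigoplus_k A$, $a \otimes u b_k \mapsto au$, really does send $f_\alpha^{-i}\fbar_\alpha^{-j} \otimes n$ to the tuple $(f_\alpha^{-i}\fbar_\alpha^{-j} u_k)_k$.
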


\begin{proof}
We first note that (a) and (b) are equivalent by the above observation and Lemma \ref{balancedmapslemma}. Also, if $n = f_\alpha^i \cdot n_1 + \fbar_\alpha^j \cdot n_2$ for some $n_1, n_2 \in M$, then $f_\alpha^{-i} \fbar_\alpha^{-j} \otimes n = f_\alpha^{-i} \fbar_\alpha^{-j} \otimes f_\alpha^i \cdot n_1 + f_\alpha^{-i} \fbar_\alpha^{-j} \otimes \fbar_\alpha^j \cdot n_2 = f_\alpha^0 \fbar_\alpha^{-j} \otimes n_1 + f_\alpha^{-i} \fbar_\alpha^0 \otimes n_2 = 0$, so certainly (c) implies (a), and in fact this is true for any $M \in U$-mod. To show (b) implies (c), we suppose $n \in M$ cannot be written in the form $f_\alpha^i \cdot n_1 + \fbar_\alpha^j \cdot n_2$. We seek a $U(\mathfrak{a})$-balanced map $\varphi: A \times M \rightarrow V$ such that $\varphi(f_\alpha^{-i} \fbar_\alpha^{-j}, n) \neq 0$. It suffices to consider the case where $M$ is indecomposable.

First consider the case where $\g = \mathfrak{sl}_2$ and $M$ is a Verma module of weight $(\lambda, \mu)$, where $\mu \neq 0$. Let $n \in M$. Then $n = \sum_{a, b} k_{a, b} f_\alpha^a \fbar_\alpha^b \otimes 1$, where all but finitely many of the $k_{a, b}$ are zero. Observe that $n$ is of the form $n = f_\alpha^i \cdot n_1 + \fbar_\alpha^j \cdot n_2$ for some $n_1, n_2 \in M$ if and only if $k_{a, b} = 0$ for all $a < i$ and $b < j$.

Now suppose $n$ is not of the form $n = f_\alpha^i \cdot n_1 + \fbar_\alpha^j \cdot n_2$. Then pick some $0 < a \leq i$ and $0 < b \leq j$ such that $k_{i-a, j-b} \neq 0$. We can define a $U(\mathfrak{a})$-balanced map $\varphi : A \times M \rightarrow \C$ by setting:
\[\varphi(f_\alpha^{-c} \fbar_\alpha^{-d}, f_\alpha^r \fbar_\alpha^s \otimes 1) = 
\begin{cases}
    1               & \text{if } c - r = a \text{ and } d - s = b\\
    0               & \text{otherwise}
\end{cases}
\]
and extending linearly. By construction $\varphi$ is a $U(\mathfrak{a})$-balanced map with $\varphi(f_\alpha^{-i} \fbar_\alpha^{-j}, n) \neq 0$.

Now we consider the case where $\g = \mathfrak{sl}_2$ and $M$ is indecomposable. If $M$ is a Verma module, we are done by the above. If not, let $0 = M_0 \subseteq M_1 \subseteq \dots \subseteq M_{k-1} \subseteq M_k = M$ be a filtration of $M$ such that each section is a highest weight module. In fact, by \cite[Theorem 7.1]{W}, in this case each Verma module is simple, so each section is in fact a Verma module. Let $n \in M$ be such that $n$ cannot be written as $n = f_\alpha^i \cdot n_1 + \fbar_\alpha^j \cdot n_2$.
Consider the quotient $M/M_1$. We have one of two situations:

(1) If $n + M_1$ cannot be written as $f_\alpha^i \cdot n_1 + \fbar_\alpha^j \cdot n_2 + M_1$, then by induction on $k$ we can find a $U(\mathfrak{a})$-balanced map $\varphi: A \times (M/M_1) \rightarrow \C$ such that $\varphi(f_\alpha^{-i} \fbar_\alpha^{-j}, n + M_1) \neq 0$. We can then lift this to a $U(\mathfrak{a})$-balanced map $\overline{\varphi}: A \times M \rightarrow \C$ by setting $\overline{\varphi}(u, m) = \varphi(u, m + M_1)$, which clearly satisfies $\overline{\varphi}(f_\alpha^{-i} \fbar_\alpha^{-j}, n) \neq 0$.

(2) If $n + M_1 = f_\alpha^i \cdot n'_1 + \fbar_\alpha^j \cdot n'_2 + M_1$, then let $v = n - f_\alpha^i \cdot n'_1 - \fbar_\alpha^j \cdot n'_2 \in M_1$. Now $v$ cannot be written as $f_\alpha^i \cdot v_1 + \fbar_\alpha^j \cdot v_2$ else $n$ could be written as $f_\alpha^i \cdot n_1 + \fbar_\alpha^j \cdot n_2$ for some $n_1, n_2$, and $M_1$ is a Verma module, so we have already shown there exists a $U(\mathfrak{a})$-balanced map $\varphi: A \times M_1 \rightarrow \C$ such that $\varphi(f_\alpha^{-i}, \fbar_\alpha^{-j}, v) \neq 0$. Now by Corollary \ref{splittingcor}, there exists a $U(\mathfrak{a})$-submodule $M'$ of $M$ such that $M = M_1 \oplus M'$. Hence 
\[(\varphi \oplus 0): A \times M \cong A \times (M_1 \oplus M') \cong (A \times M_1) \oplus (A \times M') \rightarrow \C\]
is a $U(\mathfrak{a})$-balanced map such that: 
\[(\varphi \oplus 0)(f_\alpha^{-i} \fbar_\alpha^{-j}, n) = (\varphi \oplus 0)(f_\alpha^{-i} \fbar_\alpha^{-j}, v + f_\alpha^i \cdot n'_1 + \fbar_\alpha^j \cdot n'_2) = \varphi(f_\alpha^{-i} \fbar_\alpha^{-j}, v) \neq 0\]

Finally, let $\g$ be an arbitrary reductive Lie algebra, let $M \in \calO_\epsilon^\mu$, and let $n \in M$ be such that $n$ cannot be written as $n = f_\alpha^i \cdot n_1 + \fbar_\alpha^j \cdot n_2$. Let $N_1$ be the $((\mathfrak{sl}_2)_\alpha)_\epsilon$-module generated by $n$, and let $N_2$ be a $U(\mathfrak{a})$-module such that $M = N_1 \oplus N_2$ as in Corollary \ref{splittingcor}. By the $\mathfrak{sl}_2$ case, we have a $U(\mathfrak{a})$-balanced map $\varphi: A \times N_1 \rightarrow \C$ such that $\varphi(f_\alpha^{-i} \fbar_\alpha^{-j}, n) \neq 0$. Then
\[(\varphi \oplus 0): A \times M \cong A \times (N_1 \oplus N_2) \cong (A \times N_1) \oplus (A \times N_2) \rightarrow \C\]
is a $U(\mathfrak{a})$-balanced map such that $(\varphi \oplus 0)(f_\alpha^{-i} \fbar_\alpha^{-j}, n) = \varphi(f_\alpha^{-i} \fbar_\alpha^{-j}, n) \neq 0$ as required.
\end{proof}

\begin{corollary}
\label{tensorzerocorollary}
Let $M \in \calO_\epsilon^\mu(\g)$ and let $\lambda \in \h^*$. Then there is some $n \in \mathbb{N}$ such that $M^{\lambda + n \alpha} = 0$. Furthermore for any such $n$ and any $m \in M^\lambda$, we have $f_\alpha^{-n} \fbar_\alpha^{-n} \otimes m = 0$ if and only if $m = 0$.
\end{corollary}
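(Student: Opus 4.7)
The plan has two short steps, one for each assertion, and neither is deep since the hard work is already done in Lemmas \ref{fdweightspaces} and \ref{tensorzerolemma}.

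First I would establish the existence of $n$ by appealing to Lemma \ref{fdweightspaces}(b), which ensures that the support of $M$ is contained in $\bigcup_{\nu \in I}(\nu - \Z_{\geq 0}\Phi^+)$ for some finite $I \subseteq \h^*$. If $\lambda + n\alpha$ were a weight of $M$ for infinitely many $n \in \mathbb{N}$, then by pigeonhole some fixed $\nu_0 \in I$ would give $\nu_0 - \lambda - n\alpha \in \Z_{\geq 0}\Phi^+$ for infinitely many $n$. Writing elements of $\Z_{\geq 0}\Phi^+$ as non-negative integer combinations of simple roots and reading off the coefficient of $\alpha$, we see that $n$ must be bounded above by the coefficient of $\alpha$ in $\nu_0 - \lambda$, a contradiction. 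Hence $M^{\lambda + n\alpha} = 0$ for all sufficiently large $n$.

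For the second assertion, one direction is immediate from bilinearity of the tensor product. For the converse, I would fix any $n$ with $M^{\lambda + n\alpha} = 0$ and $m \in M^\lambda$ with $f_\alpha^{-n} \fbar_\alpha^{-n} \otimes m = 0$, and argue by contradiction. Assuming $m \neq 0$, Lemma \ref{tensorzerolemma} (applied with $i = j = n$) produces $m_1, m_2 \in M$ with $m = f_\alpha^n \cdot m_1 + \fbar_\alpha^n \cdot m_2$. Decomposing $m_1$ and $m_2$ into their weight components and using that $f_\alpha^n$ and $\fbar_\alpha^n$ each shift weight by $-n\alpha$, projection of the equation onto the weight space $M^\lambda$ retains only the weight-$(\lambda + n\alpha)$ components of $m_1$ and $m_2$. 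Since $M^{\lambda + n\alpha} = 0$, those components vanish, yielding $m = 0$, a contradiction.

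There is no real obstacle here beyond unpacking the weight decomposition in the final step; the substance sits in Lemma \ref{tensorzerolemma}, which was already reduced to the $\mathfrak{sl}_2$ and Verma module case, while Lemma \ref{fdweightspaces}(b) controls the support of $M$.
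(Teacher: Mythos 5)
Your proposal is correct and follows essentially the same route as the paper: the existence of $n$ is deduced from Lemma \ref{fdweightspaces}(b), and the converse direction comes from Lemma \ref{tensorzerolemma} followed by projecting $m_1, m_2$ onto the $\lambda + n\alpha$ weight space, which is zero. You merely spell out the pigeonhole/simple-root-coefficient detail that the paper leaves implicit, and your contradiction framing correctly accounts for the hypothesis $m \neq 0$ in Lemma \ref{tensorzerolemma}.
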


\begin{proof}
The existence of some $n$ such that $M^{\lambda + n \alpha} = 0$ follows from Lemma \ref{fdweightspaces}(b). By Lemma \ref{tensorzerolemma}, we have that $f_\alpha^{-n} \fbar_\alpha^{-n} \otimes m = 0$ if and only if there exist $m_1, m_2 \in M$ such that $m = f_\alpha^n \cdot m_1 + \fbar_\alpha^n \cdot m_2$. Replacing $m_1$ and $m_2$ with their projections into the $\lambda + n\alpha$ weight space if necessary, we may assume $m_1, m_2 \in M^{\lambda + n \alpha} = 0$, so $m = 0$.
\end{proof}

Finally, we need one more lemma:

\begin{lemma}
\label{elementformlemma}
Let $M \in U$-mod. Then any element of $S_\alpha \otimes M$ can be written in the form $f_\alpha^{-k} \fbar_\alpha^{-l} \otimes m$, for some $k, l > 0$ and $m \in M$.
\end{lemma}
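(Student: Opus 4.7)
The plan is to use the explicit basis of $S_\alpha$ given just before the lemma, together with the fact that $f_\alpha$ and $\fbar_\alpha$ commute (as $[f_\alpha, \fbar_\alpha] = \overline{[f_\alpha,f_\alpha]} = 0$), to combine an arbitrary finite sum of elementary tensors into a single one with a common ``denominator'' $f_\alpha^{-K}\fbar_\alpha^{-L}$.

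First, I would recall that $S_\alpha$ has as a basis the elements $f_\alpha^{-i} \fbar_\alpha^{-j} v(\mathbf{k}, \mathbf{l}, \mathbf{m}, \mathbf{n})$ with $i, j > 0$, where $v(\mathbf{k}, \mathbf{l}, \mathbf{m}, \mathbf{n}) \in U$. An arbitrary element of $S_\alpha \otimes_U M$ is a finite sum $\sum_r s_r \otimes m_r$. Writing each $s_r$ in the above basis and, since each $v(\mathbf{k}, \mathbf{l}, \mathbf{m}, \mathbf{n})$ lies in $U$, pulling these factors across the tensor (using that the tensor product is balanced over $U$), we reduce to expressing a sum of the form
\[
\sum_{(i,j) \in F} f_\alpha^{-i} \fbar_\alpha^{-j} \otimes m_{i,j}
\]
with $F$ a finite set of pairs of strictly positive integers and $m_{i,j} \in M$, as a single tensor of the required form.

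Next, I would choose $K \geq \max\{i : (i,j) \in F\}$ and $L \geq \max\{j : (i,j) \in F\}$, both strictly positive. Since $f_\alpha$ and $\fbar_\alpha$ commute in $U$ (and hence in $U_\alpha$), for each $(i,j) \in F$ we have the identity
\[
f_\alpha^{-i} \fbar_\alpha^{-j} \;=\; f_\alpha^{-K} \fbar_\alpha^{-L} \cdot f_\alpha^{K-i} \fbar_\alpha^{L-j}
\]
in $U_\alpha$, and the factor $f_\alpha^{K-i} \fbar_\alpha^{L-j}$ lies in $U$ because $K-i, L-j \geq 0$. Pulling this factor across the tensor and summing yields
\[
\sum_{(i,j) \in F} f_\alpha^{-i} \fbar_\alpha^{-j} \otimes m_{i,j} \;=\; f_\alpha^{-K} \fbar_\alpha^{-L} \otimes \Bigl(\sum_{(i,j) \in F} f_\alpha^{K-i} \fbar_\alpha^{L-j} \cdot m_{i,j}\Bigr),
\]
which is in the desired form with $m := \sum_{(i,j) \in F} f_\alpha^{K-i} \fbar_\alpha^{L-j} \cdot m_{i,j} \in M$ and $K, L > 0$.

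There is really no serious obstacle here; the only point that requires attention is the justification that all the manipulations take place inside $U_\alpha$ (where the rewriting $f_\alpha^{-i}\fbar_\alpha^{-j} = f_\alpha^{-K}\fbar_\alpha^{-L}\cdot f_\alpha^{K-i}\fbar_\alpha^{L-j}$ is meaningful) and that this identity, together with the commutativity of $f_\alpha$ and $\fbar_\alpha$, is what allows us to pull $f_\alpha^{K-i}\fbar_\alpha^{L-j} \in U$ across the tensor product over $U$. The strict positivity $K, L > 0$ follows automatically since we are combining finitely many basis elements indexed by strictly positive $i, j$.
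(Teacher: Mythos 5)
Your proof is correct and follows essentially the same route as the paper's: reduce to a finite sum of terms $f_\alpha^{-i}\fbar_\alpha^{-j}\otimes m_{i,j}$ by absorbing the $v(\mathbf{k},\mathbf{l},\mathbf{m},\mathbf{n})$ factors into $M$, then take a common ``denominator'' $f_\alpha^{-K}\fbar_\alpha^{-L}$ with $K,L$ the maxima of the exponents and pull the remaining nonnegative powers of $f_\alpha,\fbar_\alpha$ across the tensor product. The only difference is that you make explicit the commutativity of $f_\alpha$ and $\fbar_\alpha$, which the paper leaves implicit.
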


\begin{proof}
Certainly any element of $S_\alpha \otimes M$ can be written as a sum of elements of the form $s \otimes m$ for some $s \in S_\alpha$ and $m \in M$. We have that any $s \in S_\alpha$ is the sum of elements of the form $f_\alpha^{-i} \fbar_\alpha^{-j} u$, where $u = v(\mathbf{k}, \mathbf{l}, \mathbf{m}, \mathbf{n})$ for some $\mathbf{k}, \mathbf{l} \in \Z_{\geq 0}^I$, $\mathbf{m}, \mathbf{n} \in \Z_{\geq 0}^J$, and $f_\alpha^{-i} \fbar_\alpha^{-j} u \otimes m = f_\alpha^{-i} \fbar_\alpha^{-j} \otimes (u \cdot m)$ for any $m \in M$, so any element of $S_\alpha \otimes M$ can be written as the sum of elements of the form $f_\alpha^{-k} \fbar_\alpha^{-l} \otimes m$.

Now let $x \in S_\alpha \otimes M$, and write $x = \sum_a f_\alpha^{-k_a} \fbar_\alpha^{-l_a} \otimes m_a$. Let $k = \max\{k_a\}, l = \max\{l_a\}$. Then:
\begin{align*}
x = \sum_a (f_\alpha^{-k_a} \fbar_\alpha^{-l_a} \otimes m_a) &= \sum_a (f_\alpha^{-k} \fbar_\alpha^{-l} \otimes f_\alpha^{k - k_a} \fbar_\alpha^{l - l_a} m_a) \\
&= f_\alpha^{-k} \fbar_\alpha^{-l} \otimes \sum_a (f_\alpha^{k - k_a} \fbar_\alpha^{l - l_a} m_a)
\end{align*}
\end{proof}

This now allows us to prove part (b) of Lemma \ref{maintwistinglemma}:

\begin{proof}[Proof of \ref{maintwistinglemma}(b)]

First observe that $\psi_M$ is certainly a homomorphism since for any $m \in M$, $u \in U$ and $s \in S_\alpha$, we have $(u \cdot \psi_M(m))(s) = \psi_M(m)(s \cdot u) = (s \cdot u) \otimes m = s \otimes (u \cdot m) = \psi_M(u \cdot m)(s)$.

To see that $\psi_M$ is always injective, let $M \in \calO_\epsilon^\mu$ and let $m \in M$ be a non-zero weight vector of weight $\lambda$ say. Then, applying Corollary \ref{tensorzerocorollary}, there exists $n \in \mathbb{N}$ such that $M^{\lambda + n \alpha} = 0$, and $\psi_M(m)(f_\alpha^{-n} \fbar_\alpha^{-n}) = f_\alpha^{-n} \fbar_\alpha^{-n} \otimes m = 0$ if and only if $m = 0$. Hence $\psi_M(m) = 0$ if and only if $m = 0$. Since $M$ and $G_\alpha T_\alpha M$ are both $\h$-semisimple, this suffices to show that $\psi_M$ is injective.

We now show that $\psi_M$ is surjective. Let $g \in G_\alpha T_\alpha M$. This $g$ is a map $S_\alpha \rightarrow S_\alpha \otimes M$, and we may assume $g$ is a weight element of $G_\alpha T_\alpha M$, of weight $\lambda$ say. We first use Lemma \ref{elementformlemma} to see that, for all $i, j > 0$ we can find $k_{i, j}, l_{i, j} > 0$, $m_{i, j} \in M$ such that $g(f_\alpha^{-i} \fbar_\alpha^{-j}) = f_\alpha^{-k_{i, j}} \fbar_\alpha^{-l_{i, j}} \otimes m_{i, j}$.

If $i > k_{i, j}$, then we may replace $m_{i, j}$ with $f_\alpha^{i - k_{i, j}} \cdot m_{i, j}$ and set $k_{i, j} = i$, since
\[f_\alpha^{-k_{i, j}} \fbar_\alpha^{-l_{i, j}} \otimes m_{i, j} = f_\alpha^{-i} \fbar_\alpha^{-l_{i, j}} \otimes (f_\alpha^{i - k_{i, j}} \cdot m_{i, j})\]
and hence we may assume that $i \leq k_{i, j}$. Similarly, we may assume that $j \leq l_{i, j}$. On the other hand, suppose $i < k_{i, j}$. Then since $g$ is a $U(\g_\epsilon)$-homomorphism, we have:
\[0 = f_\alpha^i \cdot g(f_\alpha^{-i} \fbar_\alpha^{-j}) = f_\alpha^{-k_{i, j} + i} \fbar_\alpha^{-l_{i, j}} \otimes m_{i, j}\]
By Lemma \ref{tensorzerolemma} we have that $m_{i, j} = f_\alpha^{k_{i, j} - i} \cdot m_1 + \fbar_\alpha^{l_{i, j}} \cdot m_2$ for some $m_1, m_2 \in M$. Hence:
\[g(f_\alpha^{-i} \fbar_\alpha^{-j}) = f_\alpha^{-k_{i, j}} \fbar_\alpha^{-l_{i, j}} \otimes (f_\alpha^{k_{i, j} - i} \cdot m_1 + \fbar_\alpha^{l_{i, j}} \cdot m_2) = f_\alpha^{-i} \fbar_\alpha^{-l_{i, j}} \otimes m_1\]
so, replacing $m_{i, j}$ with $m_1$, we may assume that $k_{i, j} = i$, and similarly we may assume that $l_{i, j} = j$.

Since $g$ was assumed to have weight $\lambda$, we have that for any $h \in \h$,
\[\lambda(h) g(f_\alpha^{-i} \fbar_\alpha^{-j}) = (h \cdot g)(f_\alpha^{-i} \fbar_\alpha^{-j}) = g(f_\alpha^{-i} \fbar_\alpha^{-j} h) = g((h - (i + j)\alpha(h)) f_\alpha^{-i} \fbar_\alpha^{-j})\]
\[= (h - (i + j)\alpha(h)) \cdot g(f_\alpha^{-i} \fbar_\alpha^{-j})\]
so $g(f_\alpha^{-i} \fbar_\alpha^{-j}) = f_\alpha^{-i} \fbar_\alpha^{-j} \otimes m_{i, j}$ has weight $\lambda + (i+j)\alpha$. But by a similar calculation as (5.15) (but using the untwisted action on $S_\alpha \otimes M$ rather than the twisted action used there), if $m \in M$ has weight $\lambda'$, then $f_\alpha^{-i} \fbar_\alpha^{-j} \otimes m$ has weight $\lambda' + (i+j)\alpha$ in $S_\alpha \otimes M$. Hence the $m_{i, j}$ all have weight $\lambda$ in $M$.

Now we choose $n$ such that $M^{\lambda + n \alpha} = 0$. Then for any $i, j \geq n$, we have
\[f_\alpha^{-n} \fbar_\alpha^{-n} \otimes m_{n, n} = g(f_\alpha^{-n} \fbar_\alpha^{-n}) = f_\alpha^{i - n} \fbar_\alpha^{j - n} \cdot g(f_\alpha^{-i} \fbar_\alpha^{-j}) = f_\alpha^{-n} \fbar_\alpha^{-n} \otimes m_{i, j}\]
so $f_\alpha^{-n} \fbar_\alpha^{-n} \otimes (m_{n, n} - m_{i, j}) = 0$. But, again applying Corollary \ref{tensorzerocorollary}, we must have $m_{n, n} - m_{i, j} = 0$, i.e. $m_{n, n} = m_{i, j}$. Let $m = m_{n, n}$. Then we argue that $f_\alpha^{-i} \fbar_\alpha^{-j} \otimes m = f_\alpha^{-i} \fbar_\alpha^{-j} \otimes m_{i, j}$ for all $i, j > 0$: if $i, j \geq n$ we have just seen this, and if not then we can use that fact that \[f_\alpha^{-i + 1} \fbar_\alpha^{-j} \otimes m_{i, j} = f_\alpha \cdot g(f_\alpha^{-i} \fbar_\alpha^{-j}) = g(f_\alpha^{-i+1} \fbar_\alpha^{-j}) = f_\alpha^{-i+1} \fbar_\alpha^{-j} \otimes m_{i-1, j}\]
(along with a similar result relating $m_{i, j}$ and $m_{i, j-1}$).

We now claim that $g = \psi_M(m)$ for this $m$ chosen above. For any $i, j > 0$, $u \in U$, let $i', j' > 0$, $u' \in U$ be such that $f_\alpha^{-i} \fbar_\alpha^{-j} u = u' f_\alpha^{-i'} \fbar_\alpha^{-j'}$. Then
\[g(f_\alpha^{-i} \fbar_\alpha^{-j} u) = g(u' f_\alpha^{-i'} \fbar_\alpha^{-j'}) = u' f_\alpha^{-i'} \fbar_\alpha^{-j'} \otimes m = f_\alpha^{-i} \fbar_\alpha^{-j} u \otimes m = \psi_M(m)(f_\alpha^{-i} \fbar_\alpha^{-j} u)\]
Since $\{f_\alpha^{-i} \fbar_\alpha^{-j} u: i, j > 0, u \in U\}$ spans $S_\alpha$, we have $g = \psi_M(m)$ as required.
\end{proof}

Before we prove Lemma \ref{maintwistinglemma}(c), we show the following:

\begin{lemma}
\label{homextensionlemma}
Let $M$ be a $U$-module, and let $A \subseteq S_\alpha$ and $\mathfrak{a} \subseteq \g$ be as in Lemma \ref{tensorzerolemma}. Let $\varphi: A \rightarrow M$ be a $U(\mathfrak{a})$-homomorphism. Then $\varphi$ extends uniquely to a $U$-homomorphism $\varphi: S_\alpha \rightarrow M$.
\end{lemma}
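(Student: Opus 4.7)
The plan is to invoke the tensor-hom adjunction (Frobenius reciprocity), taking advantage of the bimodule isomorphism $S_\alpha \cong U \otimes_{U(\mathfrak{a})} A$ that has already been established. Since $S_\alpha$ is the induced $U$-module from the $U(\mathfrak{a})$-module $A$, extending a $U(\mathfrak{a})$-linear map $\varphi : A \to M$ to a $U$-linear map on $S_\alpha$ is the standard induction/restriction adjunction and should go through formally.

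For existence, I would identify $S_\alpha$ with $U \otimes_{U(\mathfrak{a})} A$ and define the extension by $\tilde{\varphi}(u \otimes a) := u \cdot \varphi(a)$. To see that this descends to the balanced tensor product, I need only note that for any $b \in U(\mathfrak{a})$, the $U(\mathfrak{a})$-linearity of $\varphi$ gives $(ub) \cdot \varphi(a) = u \cdot (b \cdot \varphi(a)) = u \cdot \varphi(b \cdot a)$, so the assignment respects the relations of $\otimes_{U(\mathfrak{a})}$. The map $\tilde{\varphi}$ is then $U$-linear by construction, since the $U$-action on the induced module is by left multiplication on the first factor. Finally, $\tilde{\varphi}$ restricts to $\varphi$ on $A$: under the bimodule isomorphism, the inclusion $A \hookrightarrow S_\alpha$ corresponds to $a \mapsto 1 \otimes a$, and $\tilde{\varphi}(1 \otimes a) = \varphi(a)$.

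For uniqueness, any $U$-homomorphism $\tilde{\varphi} : S_\alpha \to M$ restricting to $\varphi$ on $A$ must satisfy $\tilde{\varphi}(u \cdot a) = u \cdot \varphi(a)$ for all $u \in U$, $a \in A$. The bimodule isomorphism $S_\alpha \cong U \otimes_{U(\mathfrak{a})} A$ implies in particular that $A$ generates $S_\alpha$ as a left $U$-module, so these values determine $\tilde{\varphi}$ on all of $S_\alpha$.

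There is no real obstacle here; the whole argument is formal given the bimodule isomorphism, which is where the actual content lies and which was already proved. The only point requiring any verification is well-definedness on the tensor product, and that is immediate from the hypothesis that $\varphi$ is $U(\mathfrak{a})$-linear.
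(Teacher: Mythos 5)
Your proposal is correct and is essentially identical to the paper's proof, which also identifies $S_\alpha$ with $U \otimes_{U(\mathfrak{a})} A$ and invokes Frobenius reciprocity; you merely spell out the adjunction in more detail.
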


\begin{proof}
Frobenius reciprocity states that for algebras $R \subseteq S$, an $R$-module $N$ and an $S$-module $M$, there is an isomorphism $\Hom_S(S \otimes_R N, M) \cong \Hom_R(N, M)$ given by the restriction map $\varphi \mapsto \varphi|_{1 \otimes R}$. Setting $R = U(\mathfrak{a})$, $S = U$, and $N = A$, the result follows from the earlier observation that $S_\alpha \cong U \otimes_{U(\mathfrak{a})} A$ as left $U$-modules.
\end{proof}

\begin{lemma}
\label{homconstruction}
Let $M \in \calO_\epsilon^\mu$ and let $\mathcal{I}$ be a subset of $\Z^2$ satisfying:

\begin{enumerate}
\item[(1)] $(i, j) \in\mathcal{I}$ whenever $i \leq 0$ or $j \leq 0$.

\item[(2)] If $(i, j) \in \mathcal{I}$, then $(i-1, j) \in \mathcal{I}$ and $(i, j-1) \in \mathcal{I}$.
\end{enumerate}
and let $\{m_{i, j} \in M : (i, j) \in \mathcal{I}\}$ be a collection of elements of $M$ satisfying:
\begin{enumerate}
\item[(i)] $m_{i, j} = 0$ whenever $i \leq 0$ or $j \leq 0$.

\item[(ii)] $e_\alpha \cdot m_{i, j} = m_{i-1, j}$ whenever $(i, j) \in \mathcal{I}$

\item[(iii)] $\overline{e}_\alpha \cdot m_{i, j} = m_{i, j-1}$ whenever $(i, j) \in \mathcal{I}$.
\end{enumerate}

Then there exists a $U(\mathfrak{a})$-homomorphism $\varphi: A \rightarrow \phi_\alpha^{-1}(M)$ such that $\varphi(f_\alpha^{-i} \fbar_\alpha^{-j}) = m_{i, j}$, which by Lemma \ref{homextensionlemma} extends to a $U$-homomorphism $\varphi: S_\alpha \rightarrow \phi_\alpha^{-1}(M)$. Moreover, if there exists $\lambda \in \h^*$ such that $\wt(m_{i, j}) = \lambda - (i+j)\alpha$ for all $i, j \in I$, then we can choose $\varphi$ to also be weight and hence in $G_\alpha M$.
\end{lemma}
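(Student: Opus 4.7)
The plan is to construct $\varphi : A \to \phi_\alpha^{-1}(M)$ directly on the basis of $A$ and then invoke Lemma~\ref{homextensionlemma} to push it up to a $U$-homomorphism $S_\alpha \to \phi_\alpha^{-1}(M)$. Since $\mathfrak{a} = \langle f_\alpha, \fbar_\alpha\rangle$ acts on the basis $\{f_\alpha^{-i}\fbar_\alpha^{-j} : i, j \geq 1\}$ of $A$ simply by lowering indices (with vanishing once an exponent hits zero), and since in the twisted module $\phi_\alpha^{-1}(M)$ one has $f_\alpha \cdot_{\alpha^{-1}} = e_\alpha \cdot$ and $\fbar_\alpha \cdot_{\alpha^{-1}} = \overline{e}_\alpha \cdot$, specifying a $U(\mathfrak{a})$-homomorphism $\varphi : A \to \phi_\alpha^{-1}(M)$ is equivalent to specifying a family $\{m'_{i,j} \in M : i, j \geq 1\}$, extended by zero for $i \leq 0$ or $j \leq 0$, satisfying $e_\alpha \cdot m'_{i,j} = m'_{i-1, j}$ and $\overline{e}_\alpha \cdot m'_{i,j} = m'_{i, j-1}$.

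For $(i,j) \in \mathcal{I}$ with $i, j \geq 1$ I would set $m'_{i,j} := m_{i,j}$; hypotheses (i)--(iii) together with the downward-closure condition on $\mathcal{I}$ ensure that the recursion is already satisfied among these assignments. To extend to $(i,j) \notin \mathcal{I}$ with $i, j \geq 1$, I proceed by induction on $i+j$, at each step picking such an $(i,j)$ minimal in the coordinatewise partial order so that $(i-1, j)$ and $(i, j-1)$ are already assigned. The compatibility condition $e_\alpha \cdot m'_{i, j-1} = \overline{e}_\alpha \cdot m'_{i-1, j}$ (both equal $m'_{i-1, j-1}$) holds automatically from the recursion already satisfied at smaller indices together with $[e_\alpha, \overline{e}_\alpha] = 0$ in $\g_\epsilon$. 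To actually produce a common preimage $m'_{i,j}$ I would use Lemma~\ref{freemodulelemma}, which identifies $M$ as a free $U(\mathfrak{a})$-module under $\mu(h_\alpha) \neq 0$; an explicit computation of how $e_\alpha$ and $\overline{e}_\alpha$ act on the free basis of $M$, via the commutation relations $[e_\alpha, f_\alpha] = h_\alpha$, $[e_\alpha, \fbar_\alpha] = \overline{h_\alpha} = [\overline{e}_\alpha, f_\alpha]$, and $[\overline{e}_\alpha, \fbar_\alpha] = 0$, then solves the pair of preimage equations simultaneously.

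Once $\varphi : A \to \phi_\alpha^{-1}(M)$ has been constructed, Lemma~\ref{homextensionlemma} extends it to the desired $U$-homomorphism $S_\alpha \to \phi_\alpha^{-1}(M)$, which by construction satisfies $\varphi(f_\alpha^{-i}\fbar_\alpha^{-j}) = m_{i,j}$ for $(i,j) \in \mathcal{I}$. For the final (weight) assertion, if $\wt(m_{i,j}) = \lambda - (i+j)\alpha$ in $M$ for all $(i,j) \in \mathcal{I}$, then at each inductive step I would replace the chosen preimage by its projection onto $M^{\lambda - (i+j)\alpha}$; since $e_\alpha$ and $\overline{e}_\alpha$ raise weights by $\alpha$, this projection still solves the preimage equations. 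The resulting family is homogeneous of weight $\lambda - (i+j)\alpha$, so Lemma~\ref{weightfunctionslemma} yields that the corresponding $\varphi$ is a weight vector of weight $s_\alpha(\lambda)$ in $\Hom_U(S_\alpha, \phi_\alpha^{-1}(M))$, hence lies in the $\h$-semisimple submodule $G_\alpha M$.

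The hard part will be the inductive preimage step. While compatibility is automatic from $[e_\alpha, \overline{e}_\alpha] = 0$, actually producing an element of $M$ realising both preimage equations is a genuine constraint, and it is here that both the $U(\mathfrak{a})$-freeness coming from Lemma~\ref{freemodulelemma} and the standing hypothesis $\mu(h_\alpha) \neq 0$ must be combined carefully --- without the latter, the relevant operators $e_\alpha$, $\overline{e}_\alpha$ lose the invertibility properties (up to their locally nilpotent parts) that keep the recursive system solvable.
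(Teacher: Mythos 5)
Your overall architecture coincides with the paper's: extend the family $m_{i,j}$ to all of $\Z^2$ by induction on $i+j$, observe that the compatibility $e_\alpha\cdot m_{i,j-1} = m_{i-1,j-1} = \overline{e}_\alpha\cdot m_{i-1,j}$ is automatic from the recursion already satisfied at smaller indices, define $\varphi$ on the basis of $A$, extend by Lemma \ref{homextensionlemma}, and obtain the weight refinement by projecting each chosen preimage onto $M^{\lambda-(i+j)\alpha}$ (your identification of the resulting weight as $s_\alpha(\lambda)$ via Lemma \ref{weightfunctionslemma} is also correct). The gap is exactly where you admit the hard part is: you never prove that a simultaneous preimage exists, i.e.\ that for $y=m_{i-1,j}$ and $z=m_{i,j-1}$ satisfying $\overline{e}_\alpha\cdot y = e_\alpha\cdot z$ there is some $x\in M$ with $e_\alpha\cdot x=y$ and $\overline{e}_\alpha\cdot x=z$. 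This solvability is the entire mathematical content of the lemma, and ``an explicit computation on the free basis solves the pair of equations'' does not establish it: the free $U(\mathfrak{a})$-generators supplied by Lemma \ref{freemodulelemma} are annihilated by $e_\alpha$ and $\overline{e}_\alpha$ but are only \emph{generalised} eigenvectors for $\overline{h}_\alpha$, so the commutator formulas express $e_\alpha\cdot(f_\alpha^a\fbar_\alpha^b v)$ and $\overline{e}_\alpha\cdot(f_\alpha^a\fbar_\alpha^b v)$ in terms of an operator of the form $\mu(h_\alpha)\cdot\id$ plus a nilpotent part that does not preserve the chosen basis, and inverting the resulting system is a genuine argument, not a routine verification.

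The paper supplies this step as a separate Claim: it passes to the $((\mathfrak{sl}_2)_\alpha)_\epsilon$-submodule $N$ generated by $m_{i-1,j}$ and $m_{i,j-1}$, which lies in $\calO^{\mu(h_\alpha)}((\mathfrak{sl}_2)_\alpha)$, and proves $\ker(\theta_2)=\im(\theta_1)$ for $\theta_1(x)=(e_\alpha\cdot x,\overline{e}_\alpha\cdot x)$ and $\theta_2(y,z)=\overline{e}_\alpha\cdot y-e_\alpha\cdot z$. For $N$ a Verma module over $\mathfrak{sl}_2$ this is done by a weight-space dimension count ($n+1$, $n$, $n-1$) that reduces the exactness to injectivity of $\theta_1$ and surjectivity of $\theta_2$, both of which use $\mu(h_\alpha)\neq 0$ through the formula $\overline{e}\cdot(\overline{f}^if^j\otimes 1)=\mu j\,\overline{f}^if^{j-1}\otimes 1-j(j-1)\overline{f}^{i+1}f^{j-2}\otimes 1$; the general case follows by induction along a filtration with Verma sections (these sections are Vermas because, for $\mu(h_\alpha)\neq 0$, the relevant Verma modules are simple). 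Some argument of this kind must be inserted before your induction on $i+j$ can proceed; as written, the proposal defers the only nontrivial point.
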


\begin{proof}
We construct elements $m_{i, j}$ for $(i, j) \in \Z^2 \backslash \mathcal{I}$ inductively such that the $m_{i, j}$ satisfy conditions (i) - (iii) above for any $(i, j) \in \Z^2$. Then observe that (i) - (iii) ensure that setting $\varphi(f_\alpha^{-i} \fbar_\alpha^{-j}) = m_{i, j}$ defines a $U(\mathfrak{a})$-homomorphism $\varphi: A \rightarrow \phi_\alpha^{-1}(M)$, since it is enough to check that $m_{i-1, j} = \varphi(f_\alpha^{-i+1} \fbar_\alpha^{-j}) = f_\alpha \cdot_{\alpha^{-1}} \varphi(f_\alpha^{-i} \fbar_\alpha^{-j}) = f_\alpha \cdot_{\alpha^{-1}} m_{i, j} = e_\alpha \cdot m_{i, j}$ and a similar condition for $m_{i, j-1}$.

To construct such $m_{i, j}$, let $(i, j) \in \Z^2 \backslash \mathcal{I}$ be such that $i + j$ is minimal among elements of $\Z^2 \backslash \mathcal{I}$. Then in particular, $(i-1, j), (i, j-1) \in \mathcal{I}$. Let $N$ be the $((\mathfrak{sl}_2)_\alpha)_\epsilon$-module generated by $m_{i-1, j}$ and $m_{i, j-1}$, so $N \in \calO^{\mu(h_\alpha)}((\mathfrak{sl}_2)_\alpha)$. Then we use the following claim to construct $m_{i, j} \in N \subseteq M$ such that $e_\alpha \cdot m_{i, j} = m_{i-1, j}$ and $\overline{e}_\alpha \cdot m_{i, j} = m_{i, j-1}$.

\begin{claim}
Consider the maps:
\[N \overset{\theta_1}{\longrightarrow} N^2 \overset{\theta_2}{\longrightarrow} N\]
given by $\theta_1(x) = (e_\alpha \cdot x, \overline{e}_\alpha \cdot x)$ and $\theta_2(y, z) = \overline{e}_\alpha \cdot y - e_\alpha \cdot z$. Then $\ker(\theta_2) = \im(\theta_1)$. Furthermore, if $y, z$ are both in $M^\lambda \cap N$, then there exists $x \in M^{\lambda - \alpha}$ such that $\theta_1(x) = (y, z)$.
\end{claim}

\begin{proof}[Proof of claim:]
First observe that $\theta_2 \circ \theta_1 = 0$, so certainly $\ker(\theta_2) \supseteq \im(\theta_1)$. Hence we only need to show that $\im(\theta_1) \supseteq \ker(\theta_2)$.

We first deal with the case where $\g = \mathfrak{sl}_2$ and $N = M_{\lambda', \mu}$. In this case, we consider the restriction of these maps to certain weight spaces in the following way (for any $\lambda \in \h^*$):
\[N^\lambda \overset{\theta_1}{\longrightarrow} (N^{\lambda + \alpha})^2 \overset{\theta_2}{\longrightarrow} N^{\lambda + 2\alpha}\]
Now, either $\dim(N^{\lambda + \alpha}) = 0$, in which case $\im(\theta_1) = \ker(\theta_2)$ automatically, or the dimensions of these weight spaces satisfy $\dim(N^\lambda) = n + 1$, $\dim(N^{\lambda + \alpha}) = n$, and $\dim(N^{\lambda + 2\alpha}) = n - 1$. Hence by considering dimensions and the fact that $\ker(\theta_2) \supseteq \im(\theta_1)$, it is enough to show that $\theta_1$ is injective and $\theta_2$ is surjective. We can compute that $\overline{e}$ acts on the basis vectors $\overline{f}^i f^j \otimes 1$ by:
\[\overline{e} \cdot (\overline{f}^i f^j \otimes 1) = \mu j \overline{f}^i f^{j-1} \otimes 1 - j(j-1)\overline{f}^{i+1} f^{j-2} \otimes 1\]
so by considering $\theta_2(x, 0)$, we see $\theta_2$ is surjective. We also see that $\overline{e} \cdot v = 0$ if and only if $v = \overline{f}^i \otimes 1$. Since $\mu \neq 0$, we have $e \cdot \overline{f}^i \otimes 1 \neq 0$, so $\theta_1$ is injective as required.

If $\g = \mathfrak{sl}_2$ but $N$ is not a Verma module, let $0 = N_0 \subseteq N_1 \subseteq \dots \subseteq N_{k-1} \subseteq N_k = N$ be a filtration of $N$ such that each quotient is a Verma module. Then, given $y, z$ such that $e_\alpha \cdot z = \overline{e}_\alpha \cdot y$, in the quotient $N/N_{k-1} \cong M_{\lambda, \mu}$ we have that there exists $x \in N$ such that $e_\alpha \cdot x + N_{k-1} = y + N_{k-1}$ and $\overline{e}_\alpha \cdot x + N_{k-1} = z + N_{k-1}$. Hence $e_\alpha \cdot x - y, \overline{e}_\alpha \cdot x - z \in N_{k-1}$. But $\overline{e}_\alpha \cdot (e_\alpha \cdot x - y) = e_\alpha \cdot (\overline{e}_\alpha \cdot x - z)$, so by induction on $k$, there exists $x' \in N_{k-1}$ such that $e_\alpha \cdot x' = e_\alpha \cdot x - y$ and $\overline{e}_\alpha \cdot x' = \overline{e}_\alpha \cdot x - z$. Hence $e_\alpha \cdot (x-x') = y$ and $\overline{e}_\alpha \cdot (x-x') = z$, so $\im(\theta_1) \supseteq \ker(\theta_2)$.

Finally, if $y, z \in M^\lambda \cap N$, then given $x \in N \subseteq M$ such that $e_\alpha \cdot x = y$ and $\overline{e}_\alpha \cdot x = z$, by considering weight spaces we may replace $x$ with its component in the $\lambda - \alpha$ weight space and this still holds, proving the final part of the claim.
\end{proof}

Since $e_\alpha \cdot m_{i, j-1} = m_{i-1, j-1} = \overline{e}_\alpha \cdot m_{i-1, j}$, this claim then allows us to pick $m_{i, j}$ such that $e_\alpha \cdot m_{i, j} = m_{i-1, j}$ and $\overline{e}_\alpha \cdot m_{i, j} = m_{i, j-1}$, and if $m_{i-1, j}$ and $m_{i, j-1}$ both have weight $\lambda$, then we can choose $m_{i, j}$ to have weight $\lambda - \alpha$. Hence applying this inductively, we can construct $m_{i, j}$ satisfying conditions (i) - (iii).

Suppose there exists $\lambda \in \h^*$ such that $\wt(m_{i, j}) = \lambda - (i+j)\alpha$ for all $i, j \in \mathcal{I}$. Then by construction $m_{i, j}$ is weight for all $(i, j) \in \Z^2$, so by Corollary \ref{weightfunctionscor} this $\varphi$ we have constructed is a weight vector.
\end{proof}

We can now prove Lemma \ref{maintwistinglemma}(c) and (d).

\begin{proof}[Proof of \ref{maintwistinglemma}(c)]

First we show $\epsilon_N$ is a homomorphism. Let $u \in U$, let $s \in S_\alpha$, and let $g \in G_\alpha N$. Then
\[u \cdot \epsilon_N(s \otimes g) = u \cdot g(s) = \phi_\alpha^{-1} \phi_\alpha (u) \cdot g(s) = g(\phi_\alpha(u) \cdot s) = \epsilon_N((\phi_\alpha(u) \cdot s) \otimes g) = \epsilon_N(u \cdot (s \otimes g))\]
so $\epsilon_N$ is certainly a $U$-homomorphism.

Now let $m \in N$ be a weight vector, and let $a, b$ be such that $e_\alpha^a \cdot m = 0 = \overline{e}_\alpha^b \cdot m$. Let
\begin{align*}
\mathcal{I} &= \{(i, j) : i \leq 0 \mbox{ or } j \leq 0 \mbox{ or } i \leq a, j \leq b\} \subseteq \Z^2\\
m_{i, j} &= e_\alpha^{a - i} \overline{e}_\alpha^{b - j} \cdot m \mbox{ if } i \leq a, j \leq b\\
m_{i, j} &= 0 \mbox{ otherwise.}
\end{align*}

Then we can use Lemma \ref{homconstruction} to construct $\varphi \in \Hom_U(S_\alpha, \phi_\alpha^{-1}(M))$ which is a weight vector and therefore in $G_\alpha N$ such that $\varphi(f_\alpha^{-a} \fbar_\alpha^{-b}) = m$. Hence $\epsilon_N(f_\alpha^{-a} \fbar_\alpha^{-b} \otimes \varphi) = m$, so $\epsilon_N$ is surjective.

We now show $\epsilon_N$ is injective. By Lemma \ref{elementformlemma} any element of $T_\alpha G_\alpha N$ may be written as $f_\alpha^{-i} \fbar_\alpha^{-j} \otimes g$ for some $g \in G_\alpha N$. Suppose $\epsilon_N(f_\alpha^{-i} \fbar_\alpha^{-j} \otimes g) = g(f_\alpha^{-i} \fbar_\alpha^{-j}) = 0$. We may assume $g$ is weight - if not, we may write $g$ as a sum of $g_\lambda \in (G_\alpha N)^\lambda$, which by considering weight spaces must all satisfy $g_\lambda(f_\alpha^{-i} \fbar_\alpha^{-j}) = 0$, and then apply the following to each $g_\lambda$.

Let $n_{k, l} = g(f_\alpha^{-k} \fbar_\alpha^{-l})$ for all $l, k \in \Z$ and observe $n_{k, l} = 0$ if $k \leq 0$ or $l \leq 0$ or both $k \leq i$ and $l \leq j$. Choose:
\begin{align*}
\mathcal{I} &= \{(k, l) \in \Z^2 : k \leq i \mbox{ or } l \leq j\} \subseteq \Z^2 \\
m_{k, l} &= n_{k, l} \mbox{ if } k \leq i \\
m_{k, l} &= 0 \mbox{ if } l \leq j
\end{align*}
Then applying Lemma \ref{homconstruction} to this, we construct $g_1 \in \Hom_U(S_\alpha, \phi_\alpha^{-1}(M))$ which is a weight vector (and hence in $G_\alpha N$) such that:

\begin{enumerate}
\item[(a)] $g_1(f_\alpha^{-k} \fbar_\alpha^{-l}) = 0$ if $l \leq j$

\item[(b)] $g_1(f_\alpha^{-k} \fbar_\alpha^{-l}) = g(f_\alpha^{-k} \fbar_\alpha^{-l})$ if $k \leq i$
\end{enumerate}

By (a), we can define $g_1' \in G_\alpha N$ by setting $g_1'(f_\alpha^{-k} \fbar_\alpha^{-l+j}) = g_1(f_\alpha^{-k} \fbar_\alpha^{-l})$ and so $g_1 = \fbar_\alpha^j \cdot g_1'$. By (b), we can define $g_2'$ by setting $g_2'(f_\alpha^{-k+i} \fbar_\alpha^{-l}) = (g - g_1)(f_\alpha^{-k} \fbar_\alpha^{-l})$, so $(g - g_1) = f_\alpha^i \cdot g_2'$. Hence $f_\alpha^{-i} \fbar_\alpha^{-j} \otimes g = f_\alpha^{-i} \fbar_\alpha^{-j} \otimes (f_\alpha^i \cdot g_2' + \fbar_\alpha^j \cdot g_1') = 0$, so $\epsilon_N$ is injective.
\end{proof}

\begin{proof}[Proof of \ref{maintwistinglemma}(d)]

Let $N \in \calO_\epsilon^{s_\alpha(\mu)}$. Let $0 = N_0 \subseteq N_1 \subseteq \dots \subseteq N_{k-1} \subseteq N_k = N$ be a filtration of $N$ such that each section is a highest weight module. We have a short exact sequence
\[0 \rightarrow N_1 \rightarrow N \rightarrow (N/N_1) \rightarrow 0\]
and hence, since $G_\alpha$ is left exact, an exact sequence
\[0 \rightarrow G_\alpha N_1 \rightarrow G_\alpha N \rightarrow G_\alpha(N/N_1).\]
Hence $G_\alpha N$ is an extension of a submodule of $G_\alpha(N/N_1)$ by $G_\alpha N_1$. Since $G_\alpha N$ is automatically $\h$-semisimple and $\calO_\epsilon^{s_\alpha(\mu)}$ is closed under taking submodules, we can use induction on $k$ and Lemma \ref{extensionlemma} to reduce to the case where $N$ is a highest weight module of weight $(\lambda, s_\alpha(\mu))$.

Let $n$ be a highest weight generator of $N$. By Lemma \ref{homconstruction}, we can find $g_n \in G_\alpha N$ such that $g_n(f_\alpha^{-1} \fbar_\alpha^{-1}) = n$ and $g_n$ is a weight vector. We aim to show that $g_n$ is a highest weight generator of $G_\alpha N$ of weight $(s_\alpha(\lambda) - 2 \alpha, \mu)$. We now verify that $g_n$ is indeed highest weight of this weight. By Lemma \ref{weightfunctionslemma}, we certainly have that $g_n$ is a weight vector of weight $s_\alpha(\lambda) - 2\alpha$.

Let $\beta \in \Phi^+ \backslash \{\alpha\}$. Then $e_\beta \cdot g_n$ has weight $s_\alpha(\lambda) - 2\alpha + \beta$, so by Lemma \ref{weightfunctionslemma}, $(e_\beta \cdot g_n)(f_\alpha^{-i} \fbar_\alpha^{-j})$ has weight $\lambda - (i+j-2)\alpha + s_\alpha(\beta)$. Now, since $s_\alpha(\beta) \in \Phi^+ \backslash \{\alpha\}$, $\lambda - (i+j-2)\alpha + s_\alpha(\beta) \nleq \lambda$ and so $N^{\lambda - (i+j-2)\alpha + s_\alpha(\beta)} = 0$. In particular, $(e_\beta \cdot g_n)(f_\alpha^{-i} \fbar_\alpha^{-j}) = 0$ for all $i, j > 0$, so $e_\beta \cdot g_n = 0$. The same applies to $\overline{e_\beta} \cdot g_n$.

To see that $e_\alpha \cdot g_n = 0 = \overline{e_\alpha} \cdot g_n$, first consider the $((\mathfrak{sl}_2)_\alpha)_\epsilon$-module $\bigoplus_{n \in \N} N^{\lambda - n\alpha}$. Since $N$ is highest weight, this is generated as a $((\mathfrak{sl}_2)_\alpha)_\epsilon$-module by $n$, which is highest weight of weight $(\lambda(h_\alpha), s_\alpha(\mu)(h_\alpha))$. Hence since $s_\alpha(\mu)(h_\alpha) \neq 0$ and this module is clearly non-zero, so by \cite[Theorem 7.1]{W} this module is isomorphic to $M_{\lambda(h_\alpha), s_\alpha(\mu)(h_\alpha)}$, and so in particular the only elements $x \in \bigoplus_{n \in \N} N^{\lambda - n\alpha}$ such that $e_\alpha \cdot x = 0 = \overline{e}_\alpha \cdot x$ are scalar multiples of $n$. Now suppose that $e_\alpha \cdot g_n \neq 0$ or $\overline{e}_\alpha \cdot g_n \neq 0$. Then we have a non-zero element $g \in (G_\alpha N)^{s_\alpha(\lambda) - \alpha}$. By Lemma \ref{weightfunctionslemma}, $g(f_\alpha^{-1} \fbar_\alpha^{-1}) = x$ must have weight $\lambda - \alpha$. But also $e_\alpha \cdot x = 0 = \overline{e}_\alpha \cdot x$, so by the above observation $x$ is a scalar multiple of $n$, so has weight $\lambda$. Hence $x = 0$, and we can now show inductively by a similar argument that $g(f_\alpha^{-i} \fbar_\alpha^{-j}) = 0$ for all $i, j > 0$, so $g = 0$ giving a contradiction.

Finally, let $h \in \h$. Then $\overline{h} \cdot g_n \in (G_\alpha N)^{s_\alpha(\lambda) - 2\alpha}$. We have:
\begin{align*}
(\overline{h} \cdot g_n)(f_\alpha^{-1} \fbar_\alpha^{-1}) &= g_n(f_\alpha^{-1} \fbar_\alpha^{-1} \overline{h}) \\
&= g_n(\overline{h} f_\alpha^{-1} \fbar_\alpha^{-1} + \alpha(h) f_\alpha^{-2})\\
&= g_n(\overline{h} f_\alpha^{-1} \fbar_\alpha^{-1}) \\
&= \overline{h} \cdot n = \mu(h) n
\end{align*}
Now suppose $\overline{h} \cdot g_n \neq \mu(h) g_n$. Let $(i, j)$ be such that $i + j$ is minimal subject to $x:= (\overline{h} \cdot g_n - \mu(h) g_n)(f_\alpha^{-i} \fbar_\alpha^{-j}) \neq 0$, and note that $(i, j) \neq (1, 1)$ by the calculation above. Then by the minimality of $i + j$, we have $e_\alpha \cdot x = 0 = \overline{e}_\alpha \cdot x$. But by Lemma \ref{weightfunctionslemma}, we have $x \in N^{\lambda - (i+j-2)\alpha}$, which gives a contradiction by the observation in the proof that $e_\alpha \cdot g_n = 0$ that the only elements $x \in \bigoplus_{n \in \N} N^{\lambda - n\alpha}$ such that $e_\alpha \cdot x = 0 = \overline{e}_\alpha \cdot x$ are scalar multiples of $n$. Hence $\overline{h} \cdot g_n = \mu(h) g_n$, completing the proof that $g_n$ is highest weight of weight $(s_\alpha(\lambda) - 2\alpha, \mu)$ as claimed.

Now, by \ref{maintwistinglemma}(c), there is an isomorphism $\epsilon_N$ between $T_\alpha G_\alpha N$ and $N$, and furthermore this isomorphism takes $f_\alpha^{-1} \fbar_\alpha^{-1} \otimes g_n$ to $g_n(f_\alpha^{-1} \fbar_\alpha^{-1}) = n$, so $f_\alpha^{-1} \fbar_\alpha^{-1} \otimes g_n$ is a highest weight generator of $T_\alpha G_\alpha N$. Now let $L$ be the submodule of $G_\alpha N$ generated by $g_n$, and consider the inclusion $\iota: L \hookrightarrow G_\alpha N$. We wish to show that $\iota$ is an isomorphism, which will complete the proof that $N$ is highest weight and therefore the proof that $G_\alpha$ is a functor from $\calO^{s_\alpha(\mu)}$ to $\calO^\mu$. First we compute that for any $l \in L, s \in S_\alpha$:
\begin{align*}
((G_\alpha(\epsilon_N) \circ G_\alpha T_\alpha (\iota) \circ \psi_L) (l))(s) &= \epsilon_N((G_\alpha T_\alpha (\iota) \circ \psi_L) (l)(s)) \\
&= \epsilon_N(T_\alpha(\iota) \circ \psi_L(l)(s)) \\
&= \epsilon_N(T_\alpha(\iota)(s \otimes l)) \\
&= \epsilon_N(s \otimes \iota(l)) \\
&= \iota(l)(s)
\end{align*}

The first and second equalities hold since by the definition of $G_\alpha$, if $\chi \in G_\alpha T_\alpha G_\alpha N$ and $\rho$ is a map from $T_\alpha G_\alpha N$ to either $N$ or $L$ then $G_\alpha(\rho)(\chi) = \rho \circ \chi$. The final three equalities hold by the definitions of $\psi_L$, $T_\alpha(\iota)$, and $\epsilon_N$ respectively.

Hence $\iota = G_\alpha(\epsilon_N) \circ G_\alpha T_\alpha (\iota) \circ \psi_L$. Since $L$ is highest weight, by part Lemma \ref{maintwistinglemma}(b) the map $\psi_L$ is certainly an isomorphism. Similarly, by Lemma \ref{maintwistinglemma}(c), the map $\epsilon_N$ is an isomorphism, so $G_\alpha(\epsilon_N)$ is also an isomorphism. Hence to show $\iota$ is an isomorphism it suffices to show $G_\alpha T_\alpha (\iota)$ is an isomorphism, and to show this it suffices to show $T_\alpha( \iota)$ is an isomorphism. We can also conclude from this calculation that $G_\alpha T_\alpha (\iota)$ is injective.

Now we consider $T_\alpha (\iota): T_\alpha L \rightarrow T_\alpha G_\alpha N$. We have that $(T_\alpha (\iota))(f_\alpha^{-1} \fbar_\alpha^{-1} \otimes g_n) = f_\alpha^{-1} \fbar_\alpha^{-1} \otimes g_n$ which as discussed above generates $T_\alpha G_\alpha N$, so $T_\alpha(\iota)$ is certainly surjective. Let $K = \ker(T_\alpha (\iota))$ and observe since that $K$ is a submodule of $T_\alpha L \in \calO^\mu$, we have $K \in \calO^\mu$.

We have a short exact sequence:
\[0 \rightarrow K \overset{\varphi}{\rightarrow} T_\alpha L \overset{T_\alpha (\iota)}{\rightarrow} T_\alpha G_\alpha N \rightarrow 0\]
and so since $G_\alpha$ is left exact, we have an exact sequence:
\[0 \rightarrow G_\alpha K \overset{G_\alpha (\varphi)}{\rightarrow} G_\alpha T_\alpha L \overset{G_\alpha T_\alpha (\iota)}{\rightarrow} G_\alpha T_\alpha G_\alpha N\]
But $G_\alpha T_\alpha (\iota)$ is injective, so $\im(G_\alpha (\varphi)) = 0$ and hence $G_\alpha K = 0$. Now, if $K \neq 0$ then since $K \in \calO^\mu$ we can use Lemma \ref{homconstruction} to construct a non-zero element of $G_\alpha K$, so $K = 0$, i.e. $T_\alpha (\iota)$ is injective. Hence $T_\alpha (\iota)$ is an isomorphism, so by the earlier discussion $\iota$ is an isomorphism, so $G_\alpha N$ is highest weight as required.
\end{proof}

\begin{proof}[Proof of \ref{maintwistinglemma}(e)]

To show $\psi$ is natural, we must show that for any $M, N \in \calO_\epsilon^\mu$ and $f: M \rightarrow N$ that $\psi_N \circ f = G_\alpha T_\alpha f \circ \psi_M$. Again using that $G_\alpha(\rho)(\chi) = \rho \circ \chi$, we see that for any $s \in S_\alpha$ and $m \in M$:
\begin{align*}
(G_\alpha T_\alpha f \circ \psi_M(m))(s) &= G_\alpha T_\alpha f(s \otimes m) \\
&= T_\alpha f (s \otimes m) \\
&= s \otimes f(m) \\
&= \psi_M(f(m))(s) \\
&= (\psi_M \circ f)(m)(s).
\end{align*}
To show $\epsilon$ is natural, we must show that for any $M, N \in \calO_\epsilon^{s_\alpha(\mu)}$ and $f: M \rightarrow N$ that $f \circ \epsilon_M = \epsilon_N \circ T_\alpha G_\alpha f$. For any $s \in S_\alpha$ and $g \in G_\alpha M$, we have:
\begin{align*}
(\epsilon_N \circ T_\alpha G_\alpha f)(s \otimes g) &= \epsilon_N(s \otimes (G_\alpha f)(g)) \\
&= \epsilon_N(s \otimes (f \circ g)) \\
&= f(g(s)) \\
&= f(\epsilon_N(s \otimes g)) \\
&= (f \circ \epsilon_N)(s \otimes g)
\end{align*}
\end{proof}

\section{Composition Multiplicities}

\subsection{Definition of composition multiplicity}

Let $M \in \calO_\epsilon$. We define the \emph{character} of $M$ to be the function $\ch(M): \h^* \rightarrow \Z_{\geq 0}$ which sends $\lambda$ to $\dimension(M^\lambda)$, and the \emph{support} of $M$, denoted $\supp(M)$ to be the set $\supp(M)  = \{\lambda \in \h^* : M^\lambda \neq 0\}$. We define the composition multiplicities $k_\lambda(M)$ of $M$ for $\lambda \in \h^*$ using the following result, whose statement and proof is very similar to \cite[Proposition 8]{MS}:

\begin{lemma}
\label{compmultdefn}
For any $M \in \calO_\epsilon^\mu$, there are unique $k_\lambda(M) \in \Z_{\geq 0}$ such that 
\[\ch(M) = \sum k_\lambda(M) \ch(L_{\lambda, \mu})\]
\end{lemma}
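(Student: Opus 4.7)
The plan is to (i) establish uniqueness from the structure of the simple characters, (ii) define the multiplicities recursively via a decreasing induction on the partial order, and (iii) verify non-negativity by a filtration argument built from Lemma \ref{finitefiltration}.

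For uniqueness, I will use that $\ch(L_{\lambda,\mu})$ is supported in $\lambda - \Z_{\geq 0}\Phi^+$ with $\ch(L_{\lambda,\mu})(\lambda) = 1$. Given two decompositions, subtracting yields $\sum(k_\lambda - k'_\lambda)\ch(L_{\lambda,\mu}) = 0$. By Lemma \ref{fdweightspaces}(b), any $\lambda$ with $k_\lambda \neq k'_\lambda$ must lie in the finite union of down-cones containing $\supp(M)$, so a maximal such $\lambda$ exists; evaluating at $\lambda$ then forces $k_\lambda = k'_\lambda$, a contradiction.

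For existence, I define $k_\lambda(M)$ by the recursion
\[
k_\lambda(M) := \dim M^\lambda - \sum_{\lambda' > \lambda} k_{\lambda'}(M)\dim L_{\lambda',\mu}^\lambda.
\]
Lemma \ref{fdweightspaces}(b) guarantees that only finitely many $\lambda' > \lambda$ lie in $\supp(M)$, so the sum is finite; the same fact makes the induction well-founded on the finite poset $(\lambda + \Z_{\geq 0}\Phi^+) \cap \supp(M)$. Rearranging the defining equation then gives $\ch(M)(\lambda) = \sum_{\lambda'} k_{\lambda'}(M)\ch(L_{\lambda',\mu})(\lambda)$ at every weight $\lambda$.

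The main obstacle is showing $k_\lambda(M) \geq 0$. For this, I fix $\nu \in \h^*$ and build a finite filtration of $M$ whose sections are either simple modules $L_{\lambda',\mu}$ with $\lambda' \geq \nu$ or highest weight modules whose top weight is not $\geq \nu$. Starting from a filtration provided by Lemma \ref{finitefiltration}, I refine iteratively: given a highest weight section $N$ of weight $(\lambda',\mu)$ with $\lambda' \geq \nu$, replace it using the short exact sequence $0 \to K \to N \to L_{\lambda',\mu} \to 0$ coming from the unique simple quotient, then apply Lemma \ref{finitefiltration} to $K$ to obtain highest weight pieces with top weights strictly below $\lambda'$ (since $K^{\lambda'} = 0$). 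Termination follows because all top weights that can appear lie in the finite set $\supp(M) \cap (\nu + \Z_{\geq 0}\Phi^+)$, and each refinement strictly decreases the multiset of top weights $\geq \nu$ in this finite poset. The non-simple sections remaining at the end have top weight not $\geq \nu$, hence $\nu$ lies outside their support and they contribute zero to $M^\lambda$ for every $\lambda \geq \nu$. A backward induction on $\lambda' \in \supp(M) \cap (\nu + \Z_{\geq 0}\Phi^+)$ then identifies the multiplicity of each $L_{\lambda',\mu}$ section in this filtration with the recursively defined $k_{\lambda'}(M)$ (the base case being the maximal $\lambda'$, where the character equation forces equality). In particular, $k_\nu(M)$ equals the number of $L_{\nu,\mu}$ sections in the filtration, which is a non-negative integer.
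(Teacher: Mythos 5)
Your proof is correct, and it reaches the conclusion by a genuinely different organisation of the existence step. The uniqueness half is the same as the paper's: both locate a maximal weight at which the two coefficient systems differ (Lemma \ref{fdweightspaces}(b) guarantees such a maximum exists, since any weight with a nonzero coefficient must lie in $\supp(M)$) and evaluate there, using that $\ch(L_{\lambda,\mu})$ is supported in $\lambda - \Z_{\geq 0}\Phi^+$ with value $1$ at $\lambda$. For existence the paper does not write down your explicit inversion formula; instead it defines $k_\lambda(M)$ by a module-theoretic recursion: pick $\nu$ maximal in $\supp(M) \cap (\lambda + \Z_{\geq 0}\Phi^+)$, extract a highest weight submodule $K$ of weight $(\nu,\mu)$, write $K/K' \cong L_{\nu,\mu}$ with $K'$ the image of the maximal submodule of $M_{\nu,\mu}$, and set $k_\lambda(M) = k_\lambda(M/K) + k_\lambda(K') + \delta_{\nu\lambda}$, the induction being on the finite quantity $\sum_{\lambda' \geq \lambda} \dim M^{\lambda'}$; non-negativity is then automatic. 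You define $k_\lambda$ combinatorially by inverting the unitriangular character system and prove non-negativity separately by refining a Lemma \ref{finitefiltration} filtration until every section with top weight $\geq \nu$ is simple; your refined filtration is essentially a local version of the one constructed in Lemma \ref{compmultalternatedefn}, so your route gives a more unified picture of the two lemmas at the cost of being longer. Two points deserve to be made explicit. First, as written your recursion sums over all $\lambda' > \lambda$, which is an infinite set; it should be run over the finite poset $\supp(M) \cap (\lambda + \Z_{\geq 0}\Phi^+)$ with $k_{\lambda'} := 0$ for $\lambda' \notin \supp(M)$, and then the character identity at weights outside $\supp(M)$ is supplied a posteriori by your identification $k_{\lambda'} = c_{\lambda'}$ with the filtration multiplicities (which also shows $k_{\lambda'}$ really does vanish off the support). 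Second, termination of the refinement is a well-foundedness statement for the multiset order on the finite poset of top weights; "strictly decreases the multiset" is the right invariant but merits a sentence of justification.
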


\begin{proof}
We first show uniqueness. Suppose we have
\[\ch(M) = \sum a_\lambda \ch(L_{\lambda, \mu}) = \sum b_\lambda \ch(L_{\lambda, \mu})\]
for some $a_\lambda, b_\lambda \in \Z_{\geq 0}$, and $a_\lambda \neq b_\lambda$ for some $\lambda \in \h^*$. Then we let $X = \{\lambda \in \h^* : a_\lambda > b_\lambda\}$, $Y = \supp(M) \backslash X$, and
\[\chi = \sum_{\lambda \in X} (a_\lambda - b_\lambda) \ch(L_{\lambda, \mu}) = \sum_{\lambda \in Y} (b_\lambda - a_\lambda) \ch(L_{\lambda, \mu})\]
Since $a_\lambda \neq b_\lambda$ for some $\lambda$, we have that $\chi \neq 0$. Let $\nu \in \h^*$ be such that $\chi(\nu) \neq 0$ but $\chi(\nu') = 0$ for all $\nu' \geq \nu$. Now, if $\nu \in X$, then the coefficient of $\ch(L_{\nu, \mu})$ in the second sum must be 0, and since $\chi(\nu') = 0$ whenever $\nu' \geq \nu$, so must the coefficients of $L_{\nu', \mu}$ for $\nu' \geq \nu$. Hence $\chi(\nu) = 0$, giving a contradiction, and if instead $\nu \in Y$ we obtain a similar contradiction. 

To show existence of the $k_\lambda(M)$, we use induction on $n = \sum_{\lambda' \geq \lambda} \dimension(M^{\lambda'})$, which is finite by Lemma \ref{fdweightspaces}. If $n = 0$, then in particular $M^{\lambda} = 0$, so we must have $k_\lambda(M) = 0$.

Now, let $\Gamma$ be the set of non-negative integer sums of positive roots, and choose $\nu \in \lambda + \Gamma$ such that $\nu$ is maximal subject to the condition that $M^{\nu} \neq 0$. Then there must exist a highest weight vector of weight $\nu$ in $M$, generating a highest weight submodule $K$ of $M$. Consider the quotient map $M_{\nu, \mu} \rightarrow K$, and let $K' \subseteq K$ be the image of the unique maximal submodule $N_{\nu, \mu} \subseteq M_{\nu, \mu}$ under this quotient map. Both $\sum_{\lambda' \geq \lambda} \dimension(M/K)^{\lambda'}$ and $\sum_{\lambda' \geq \lambda} \dimension(K')^{\lambda'}$ are $< n$, so we have already constructed $k_\lambda(M/K)$ and $k_\lambda(K')$. We then set:
\begin{align*}
&k_\lambda(M) = k_\lambda(M/K) + k_\lambda(K') \mbox{ if } \nu \neq \lambda\\
&k_\lambda(M) = k_\lambda(M/K) + k_\lambda(K') +  1 \mbox{ if } \nu = \lambda    
\end{align*}
\end{proof}

We can also give an alternate interpretation of these which is closer to the notion of composition multiplicities for Artinian modules:

\begin{lemma}
\label{compmultalternatedefn}
Let $M \in \calO^\mu_\epsilon$ and let $M = M_0 \supseteq M_1 \supseteq M_2 \supseteq \cdots$ be a filtration of $M$ such that:
\begin{enumerate}
\item[(a)] Each $M_i/M_{i+1}$ is simple, and

\item[(b)] The intersection of all the $M_i$ is 0.
\end{enumerate}
Then $L_{\lambda, \mu}$ appears as a quotient $M_i/M_{i+1}$ precisely $k_\lambda(M)$ times. Furthermore, such a filtration always exists.
\end{lemma}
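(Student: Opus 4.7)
The plan is to prove existence by constructing the filtration in finite chunks that successively kill off weight spaces, and then to deduce the multiplicity statement by comparing characters and invoking the uniqueness in Lemma \ref{compmultdefn}.

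For existence I would first establish the auxiliary claim: for any $M \in \calO_\epsilon^\mu$ and any finite upper set $T \subseteq \supp(M)$ (meaning $\lambda \in T$, $\lambda' \in \supp(M)$, $\lambda' \geq \lambda$ imply $\lambda' \in T$), there is a finite chain $M = M_0 \supsetneq M_1 \supsetneq \cdots \supsetneq M_k$ with each $M_i/M_{i+1}$ simple and $M_k^\lambda = 0$ for every $\lambda \in T$. This I would prove by induction on $n_T(M) := \sum_{\lambda \in T} \dimension(M^\lambda)$, which is finite by Lemma \ref{fdweightspaces}. If $n_T(M) = 0$ take $k = 0$; otherwise pick $\nu$ maximal in $T$ in the partial order on $\h^*$. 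Upper-ness of $T$ forces $\nu$ to be maximal in all of $\supp(M)$, so every weight vector in $M^\nu$ is annihilated by $\n_\epsilon$, and Corollary \ref{maxhwcorollary} together with the assumption $M \in \calO_\epsilon^\mu$ yields a highest weight vector $v \in M^\nu$ of weight $(\nu, \mu)$. A Zorn's lemma argument (the submodules avoiding $v$ are closed under unions of chains) produces a submodule $M_1 \subseteq M$ maximal subject to $v \notin M_1$. Any nonzero submodule of $M/M_1$ must contain the image $\overline{v}$ (else its preimage would contradict maximality of $M_1$), so $M/M_1 = \langle \overline{v} \rangle$ is a highest weight module of weight $(\nu,\mu)$ whose only proper submodule is $0$; hence $M/M_1 \cong L_{\nu, \mu}$. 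Since $\nu \in T$ and $\dimension(L_{\nu, \mu}^\nu) = 1$, we have $n_T(M_1) < n_T(M)$, and the induction closes.

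Given the auxiliary claim, I would build the full filtration by iteration. Enumerate $\supp(M) = \{\lambda_1, \lambda_2, \dots\}$, which is countable since by Lemma \ref{fdweightspaces} it lies in a union of finitely many cones $\lambda - \Z_{\geq 0}\Phi^+$. Let $T_n$ be the upper closure in $\supp(M)$ of $\{\lambda_1, \dots, \lambda_n\}$; this is finite since for each $\lambda_i$ and each cone $\lambda_0 - \Z_{\geq 0}\Phi^+$ meeting $\supp(M)$, only finitely many $\gamma \in \Z_{\geq 0}\Phi^+$ satisfy $\lambda_i + \gamma \in \lambda_0 - \Z_{\geq 0}\Phi^+$. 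Applying the claim successively with $T = T_n$ and concatenating the finite chains produces an $\omega$-indexed filtration $M = M_0 \supsetneq M_1 \supsetneq \cdots$ with simple quotients. By construction $M_i^{\lambda_n} = 0$ for $i$ large enough, so $(\bigcap_i M_i)^{\lambda_n} = 0$ for every $n$; since $\bigcap_i M_i$ is $\h$-semisimple, it equals $0$.

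For the multiplicity statement, suppose $M = M_0 \supseteq M_1 \supseteq \cdots$ is any filtration satisfying (a) and (b) with $M_i/M_{i+1} \cong L_{\nu_i, \mu}$. For each $\lambda$ the descending chain $M^\lambda \supseteq M_1^\lambda \supseteq \cdots$ of subspaces of the finite-dimensional space $M^\lambda$ has intersection $(\bigcap_i M_i)^\lambda = 0$ and so stabilizes at $0$; in particular only finitely many indices $i$ contribute nonzero $\lambda$-weight space in $M_i/M_{i+1}$, giving $\dimension(M^\lambda) = \sum_i \dimension((M_i/M_{i+1})^\lambda)$. Summing yields $\ch(M) = \sum_i \ch(L_{\nu_i, \mu}) = \sum_\lambda \#\{i : \nu_i = \lambda\} \ch(L_{\lambda, \mu})$, and the uniqueness in Lemma \ref{compmultdefn} identifies the coefficient with $k_\lambda(M)$.

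The main obstacle is the existence step: because $\calO_\epsilon$ is not Artinian, a naive descending chain of maximal submodules of $M$ need not have trivial intersection, so one cannot simply iterate choosing maximal submodules. The truncation approach circumvents this by ensuring each weight space is exhausted after finitely many steps, which is precisely what forces $\bigcap_i M_i = 0$.
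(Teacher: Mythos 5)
Your first paragraph (the multiplicity statement given a filtration) is correct and is essentially the paper's argument, with a welcome extra sentence justifying $\ch(M)=\sum_i\ch(M_i/M_{i+1})$ via the stabilization of the chains $M_i^\lambda$ inside the finite-dimensional $M^\lambda$. The existence argument, however, has a genuine gap at its key step. From the maximality of $M_1$ among submodules avoiding $v$ you may conclude that every nonzero submodule of $M/M_1$ contains $\overline{v}$, i.e.\ that $\langle\overline{v}\rangle$ is the unique minimal nonzero submodule (a simple socle isomorphic to $L_{\nu,\mu}$); you may \emph{not} conclude that $M/M_1=\langle\overline{v}\rangle$, nor that $M/M_1$ is simple. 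This is not a repairable slip of wording: take $M$ to be a nonsplit self-extension of $L_{\nu,\mu}$ by itself (such modules exist in $\calO_\epsilon^\mu$ --- for instance in the blocks equivalent to finite-dimensional $\C[[X]]$-modules mentioned in the introduction; concretely, for $\g=\h$ abelian, $M=\C[\overline{h}]/(\overline{h}^2)$ with $\h$ acting by $\nu$). There the space of highest weight vectors is one-dimensional, spanned by the socle generator $v$, the only submodule avoiding $v$ is $0$, and $M/0=M$ is not simple. So your induction on $n_T$ does not close: the step that is supposed to strip off one copy of $L_{\nu,\mu}$ from the top of $M$ can fail to produce a simple quotient at all. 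The underlying difficulty is that in this non-Artinian setting one cannot in general arrange for a \emph{prescribed} simple subquotient (here, one of highest weight $\nu\in T$) to occur as a top quotient of $M$, and your induction needs exactly that to guarantee $n_T$ decreases.

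Your outer strategy --- exhausting $\supp(M)$ by finite upper sets $T$ and concatenating finite chains --- is sound and genuinely different from the paper's; what it requires is, for each $T$, a finite-length quotient $M\twoheadrightarrow Q$ whose kernel has zero weight spaces on $T$. The paper produces such quotients by a completely different device: it first reduces to $\mu=0$ using the parabolic induction and twisting equivalences of Theorems \ref{maintheorem} and \ref{maintwistingtheorem}, then uses the filtration $M\supseteq\overline{\g}M\supseteq\overline{\g}^2M\supseteq\cdots$, whose subquotients are inflated from BGG category $\calO$ and hence have finite length, and finally proves $\bigcap_i\overline{\g}^iM=0$ by reducing (via exactness of $M\mapsto\overline{\g}M$ and Lemma \ref{finitefiltration}) to an explicit computation on Verma modules $M_{\lambda,0}$. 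Replacing your Zorn step with the quotients $M/\overline{\g}^iM$ (or otherwise supplying the missing finite-length quotient) would make your argument complete.
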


\begin{proof}
The first assertion follows from the fact that $\ch(M) = \sum \ch(M_i/M_{i+1})$ and the uniqueness of $k_\lambda(M)$. For the second, we first observe that by Theorems \ref{maintheorem} and \ref{maintwistingtheorem} we can reduce to the case $\mu = 0$.

Let $M \in \calO_\epsilon^0$, and consider the filtration $M \supseteq \overline{\g} M \supseteq \overline{\g}^2 M \supseteq \cdots$ of $M$. Each quotient $\overline{\g}^i M / \overline{\g}^{i+1} M$ lies in BGG category $\calO$, and in particular has finite length, so this filtration can be refined to one which satisfies (a). Hence it is enough to show that $\bigcap \overline{\g}^i M = 0$ for any $M \in \calO_\epsilon^0$.

Suppose $M$ is an extension of $M_1$ by $M_2$ for some $M_1, M_2 \in \calO_\epsilon^0$. The functor sending $M$ to $\overline{\g} M$ is exact, and so in particular $\dim((\overline{\g}^i M)^\lambda) = \dim((\overline{\g}^i M_1)^\lambda) + \dim((\overline{\g}^i M_2)^\lambda)$ for any $i \geq 0$ and $\lambda \in \h^*$. In particular suppose $\bigcap \overline{\g}^i M_1 = 0 = \bigcap \overline{\g}^i M_2$. Then for any $\lambda \in \h^*$, there exists $i$ such that $(\overline{\g}^i M_1)^\lambda = 0 = (\overline{\g}^i M_2)^\lambda$, so $(\overline{\g}^i M)^\lambda = 0$ and therefore $\bigcap \overline{\g}^i M = 0$. Hence by Lemma \ref{finitefiltration}, we may assume that $M$ is a highest weight module of weight $(\lambda, 0)$, i.e. a quotient of the Verma module $M_{\lambda, 0}$. Let $q: M_{\lambda, 0} \rightarrow M$ be the quotient map. We have $\overline{\g}^i M = q(\overline{\g}^i M_{\lambda, 0})$, so by considering weight spaces it is in fact enough to show that $\bigcap \overline{\g}^i M_{\lambda, 0} = 0$.

For $\mathbf{n}, \mathbf{m} \in \Z_{\geq 0}^{\Phi^+}$, let $u(\mathbf{n}, \mathbf{m}) = \Pi f_{\alpha_k}^{n_k} \Pi \overline{f}_{\alpha_k}^{m_k} \in U(\g_\epsilon)$. We aim to show that $\overline{\g}^i M_{\lambda, 0} = \vspan\{u(\mathbf{n}, \mathbf{m}) \otimes 1_{\lambda, 0}: |\mathbf{m}| \geq i\}$, which then implies the desired result. Let $x \in \g$ and consider $\overline{x} \cdot u(\mathbf{n}, \mathbf{m}) \otimes 1_{\lambda, 0} = [\overline{x}, u(\mathbf{n}, \mathbf{0})] u(\mathbf{0}, \mathbf{m}) \otimes 1_{\lambda, 0} + u(\mathbf{n}, \mathbf{0}) \, \overline{x} \, u(\mathbf{0}, \mathbf{m}) \otimes 1_{\lambda, 0}$. Now, $u(\mathbf{n}, \mathbf{0}) \, \overline{x} \, u(\mathbf{0}, \mathbf{m}) \otimes 1_{\lambda, 0}$ is clearly a sum of terms of the form $u(\mathbf{n}, \mathbf{m'}) \otimes 1_{\lambda, 0}$, where $\mathbf{m'}$ is such that $|\mathbf{m'}| = |\mathbf{m}| + 1$. To deal with the other term, we note that for some $\alpha \in \Phi^+$ and $\mathbf{n'} \in \Z_{\geq 0}^{\Phi^+}$ such that $|\mathbf{n'}| = |\mathbf{n}| - 1$, we have that $[\overline{x}, u(\mathbf{n}, \mathbf{0})]u(\mathbf{0}, \mathbf{m}) \otimes 1_{\lambda, 0} = f_\alpha \overline{x} \, u(\mathbf{n'}, \mathbf{m}) \otimes 1 + [\overline{x}, f_\alpha] u(\mathbf{n'}, \mathbf{m}) \otimes 1_{\lambda, 0}$. Since $[\overline{x}, f_\alpha] = \overline{\{x, f_\alpha\}} \in \overline{\g}$, by induction on $|\mathbf{n}|$ we have that this is also a sum of terms of the form $u(\mathbf{n''}, \mathbf{m'}) \otimes 1_{\lambda, 0}$ for some $\mathbf{n''}, \mathbf{m'} \in \Z_{\geq 0}^{\Phi^+}$ where $|\mathbf{m'}| = |\mathbf{m}| + 1$. Hence applying any element of $\overline{\g}$ to $u(\mathbf{n}, \mathbf{m}) \otimes 1_{\lambda, 0}$ gives a sum of terms of of the form $u(\mathbf{n'}, \mathbf{m'}) \otimes 1$ where $|\mathbf{m'}| = |\mathbf{m}| + 1$, from which the result follows by induction.
\end{proof}

In light of this result, from now on we use the notation $[M : L_{\lambda, \mu}]$ in place of $k_\lambda(M)$.

\begin{corollary}
\label{compmultequivalence}
The parabolic induction and restriction functors $I, R$ and the twisting functors $T_\alpha, G_\alpha$ preserve composition multiplicities, i.e. for $F = I, R, T_\alpha$ or $G_\alpha$, any suitable module $M$, and suitable $\lambda, \mu \in \h^*$, we have $[M : L_{\lambda, \mu}] = [F(M) : F(L_{\lambda, \mu})]$.
\end{corollary}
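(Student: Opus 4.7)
The approach is to combine the filtration characterisation of composition multiplicity from Lemma \ref{compmultalternatedefn} with the fact that by Theorems \ref{maintheorem} and \ref{maintwistingtheorem} each of the four functors is an equivalence of categories, hence exact and fully faithful.

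First I would identify $F(L_{\lambda,\mu})$ for each $F \in \{I, R, T_\alpha, G_\alpha\}$. Transitivity of induction gives $I(M_{\lambda,\mu}(\g^\mu)) \cong M_{\lambda,\mu}(\g)$, and exactness of $I$ together with the preservation of simplicity by equivalences then forces $I(L_{\lambda,\mu}(\g^\mu)) \cong L_{\lambda,\mu}(\g)$. The computation at the start of the proof of Lemma \ref{maintwistinglemma}(a) shows that $T_\alpha$ sends any highest weight module of weight $(\lambda,\mu)$ to a highest weight module of weight $(s_\alpha(\lambda)-2\alpha, s_\alpha(\mu))$, forcing $T_\alpha L_{\lambda,\mu} \cong L_{s_\alpha(\lambda)-2\alpha,\, s_\alpha(\mu)}$. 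The effects of the quasi-inverses $R$ and $G_\alpha$ on simples are then determined accordingly.

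Next, given $M$ in the source category of $F$, pick a filtration $M = M_0 \supseteq M_1 \supseteq \cdots$ with simple successive quotients and $\bigcap_i M_i = 0$ as provided by Lemma \ref{compmultalternatedefn}. Applying the exact functor $F$ yields a chain $F(M) = F(M_0) \supseteq F(M_1) \supseteq \cdots$ whose successive quotients $F(M_i)/F(M_{i+1}) \cong F(M_i/M_{i+1})$ are simple; the remaining claim is that $\bigcap_i F(M_i) = 0$. Since $\calO_\epsilon^\mu$ is closed under submodules the intersection $\bigcap_i M_i$ exists there (as the zero object), and because an equivalence preserves all limits existing in its source, $\bigcap_i F(M_i) \cong F(\bigcap_i M_i) = F(0) = 0$. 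Lemma \ref{compmultalternatedefn} applied to the image filtration of $F(M)$ then computes $[F(M) : F(L_{\lambda,\mu})]$ as the count of indices $i$ with $F(M_i/M_{i+1}) \cong F(L_{\lambda,\mu})$; since $F$ is fully faithful this matches the count of $i$ with $M_i/M_{i+1} \cong L_{\lambda,\mu}$, namely $[M : L_{\lambda,\mu}]$.

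The only potentially delicate point is the preservation of the possibly infinite intersection under $F$. If one prefers a more hands-on argument avoiding this categorical step, the same conclusion can be reached via characters: one verifies directly that $\ch(F(M))$ is a $\Z_{\geq 0}$-linear function of $\ch(M)$---via the PBW factorisation $U(\g_\epsilon) \cong U(\mathfrak{r}_\epsilon^-) \otimes U(\p_\epsilon)$ for $I$, and via Lemma \ref{freemodulelemma} together with the weight calculation (5.15) for $T_\alpha$---and then invokes the uniqueness of the coefficients in Lemma \ref{compmultdefn} to transfer the relation $\ch(M) = \sum [M:L_{\lambda,\mu}]\ch(L_{\lambda,\mu})$ to $F(M)$.
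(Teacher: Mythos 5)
Your proposal is correct and follows essentially the same route as the paper: apply the exact equivalence $F$ to a filtration of the type provided by Lemma \ref{compmultalternatedefn} and observe that the image is again such a filtration with $F(M_i)/F(M_{i+1}) \cong F(M_i/M_{i+1})$. You are in fact more careful than the paper, which asserts without comment that the image filtration has zero intersection; your observation that an equivalence preserves the limit $\bigcap_i M_i$ (or alternatively the character argument) supplies exactly the justification the paper leaves implicit.
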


\begin{proof}
Let $M = M_0 \supseteq M_1 \supseteq M_2 \supseteq \cdots$ be a filtration of $M$ of the form described in Lemma \ref{compmultalternatedefn}. Then $F(M) = F(M_0) \supseteq F(M_1) \supseteq F(M_2) \supseteq \cdots$ is also a filtration of this form, and for all $i$ we have $F(M_i)/F(M_{i+1}) \cong F(M_i/M_{i+1})$, so $[M: L_{\lambda, \mu}] = [F(M): F(L_{\lambda, \mu})]$.
\end{proof}

\subsection{Computation of composition multiplicities for Verma modules}

We now wish to compute the composition multiplicities $[M : L_{\lambda, \mu}]$ in the case where $M = M_{\lambda, 0}$, which we will then use to compute the multiplicities for all Verma modules $M_{\lambda, \mu}$. As before, for $\mathbf{n}, \mathbf{m} \in \Z_{\geq 0}^{\Phi^+}$ we let $u(\mathbf{n}, \mathbf{m}) = \Pi f_{\alpha_k}^{n_k} \Pi \overline{f}_{\alpha_k}^{m_k} \in U(\g_\epsilon)$. We then have:

\begin{lemma}
Let $M = M_{\lambda, 0}$ for some $\lambda \in \h^*$. Then
\[M = \vspan\{u(\mathbf{n}, \mathbf{m}) \otimes 1_{\lambda, 0}: \mathbf{n}, \mathbf{m} \in \Z_{\geq 0}^{\Phi^+}\} = \bigoplus_{\mathbf{m} \in \Z_{\geq 0}^{\Phi^+}} (\vspan\{u(\mathbf{n}, \mathbf{m}) \otimes 1_{\lambda, 0}: \mathbf{n} \in \Z_{\geq 0}^{\Phi^+}\})\]
Furthermore, each of these summands is an $\h$-module, and as $\h$-modules each summand is isomorphic to the Verma module $M_{\lambda - \sum m_k \alpha_k}$ for $\g$. Hence $\ch M = \sum_{\mathbf{m} \in \Z_{\geq 0}^{\Phi^+}} \ch M_{\lambda - \sum m_k \alpha_k}$.
\end{lemma}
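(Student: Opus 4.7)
The plan is to reduce everything to the PBW theorem. First I would apply PBW to the triangular decomposition $\g_\epsilon = \n^-_\epsilon \oplus \h_\epsilon \oplus \n_\epsilon$ to identify $M_{\lambda, 0}$ with $U(\n^-_\epsilon)$ as a vector space (under $u \otimes 1_{\lambda,0} \leftrightarrow u$). A second application of PBW, this time to the vector space decomposition $\n^-_\epsilon = \n^- \oplus \overline{\n^-}$ using a basis ordering in which every $f_{\alpha_k}$ precedes every $\overline{f}_{\alpha_k}$, shows that $\{u(\mathbf{n}, \mathbf{m}) : \mathbf{n}, \mathbf{m} \in \Z_{\geq 0}^{\Phi^+}\}$ is a vector space basis of $U(\n^-_\epsilon)$. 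Hence $\{u(\mathbf{n}, \mathbf{m}) \otimes 1_{\lambda, 0}\}$ is a basis of $M_{\lambda, 0}$, which immediately yields both the spanning statement and the fact that the sum indexed by $\mathbf{m}$ is direct.

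Next I would check that each summand $V_\mathbf{m} := \vspan\{u(\mathbf{n}, \mathbf{m}) \otimes 1_{\lambda, 0} : \mathbf{n} \in \Z_{\geq 0}^{\Phi^+}\}$ is $\h$-stable. Using $[h, f_\alpha] = -\alpha(h) f_\alpha$ and $[h, \overline{f}_\alpha] = -\alpha(h) \overline{f}_\alpha$, a straightforward induction on the total degree $|\mathbf{n}| + |\mathbf{m}|$ gives
\[
[h, u(\mathbf{n}, \mathbf{m})] = -\Bigl(\sum_k (n_k + m_k) \alpha_k(h)\Bigr) u(\mathbf{n}, \mathbf{m}).
\]
Combined with $h \cdot 1_{\lambda, 0} = \lambda(h) 1_{\lambda, 0}$, this shows that $u(\mathbf{n}, \mathbf{m}) \otimes 1_{\lambda, 0}$ is a weight vector of weight $\lambda - \sum_k (n_k + m_k) \alpha_k$; in particular, $\h$ acts diagonally in this basis and preserves $V_\mathbf{m}$ for every $\mathbf{m}$.

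Finally, to identify $V_\mathbf{m}$ with the BGG Verma module $M_{\lambda - \sum_k m_k \alpha_k}$ for $\g$ as $\h$-modules, I would compare characters. The weight computation above says that the dimension of the $\nu$-weight space of $V_\mathbf{m}$ equals the number of $\mathbf{n} \in \Z_{\geq 0}^{\Phi^+}$ satisfying $\sum_k n_k \alpha_k = (\lambda - \sum_k m_k \alpha_k) - \nu$, i.e.\ Kostant's partition function evaluated at $(\lambda - \sum_k m_k \alpha_k) - \nu$. This is precisely the dimension of the $\nu$-weight space of the $\g$-Verma module $M_{\lambda - \sum_k m_k \alpha_k}$, so the two $\h$-modules are isomorphic (any two $\h$-semisimple modules with equal characters are isomorphic as $\h$-modules). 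The character identity in the conclusion is then obtained by summing over $\mathbf{m}$. There is no real obstacle beyond the bookkeeping: the only delicate point is fixing the PBW ordering with all $f_{\alpha_k}$ before all $\overline{f}_{\alpha_k}$ so that the decomposition indexed by $\mathbf{m}$ agrees with the PBW decomposition of $U(\n^-_\epsilon) \cong U(\n^-) \otimes U(\overline{\n^-})$.
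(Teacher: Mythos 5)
Your proposal is correct and follows essentially the same route as the paper: the PBW theorem gives the basis $\{u(\mathbf{n},\mathbf{m}) \otimes 1_{\lambda,0}\}$ and hence the direct sum decomposition, and the weight computation shows each summand is $\h$-stable with character equal to that of the corresponding Verma module for $\g$. The only cosmetic difference is that the paper concludes by exhibiting the explicit weight-preserving bijection $u(\mathbf{n},\mathbf{m}) \otimes 1_{\lambda,0} \mapsto \prod_k f_{\alpha_k}^{n_k} \otimes 1_{\lambda - \sum_k m_k\alpha_k}$ rather than comparing characters as you do.
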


\begin{proof}
The equality $M = \bigoplus_{\mathbf{m} \in \Z_{\geq 0}^{\Phi^+}} (\vspan\{u(\mathbf{n}, \mathbf{m}) \otimes 1_{\lambda, 0}: \mathbf{n} \in \Z_{\geq 0}^{\Phi^+}\})$ holds by the definition of $M_{\lambda, \mu}$, and since $u(\mathbf{n}, \mathbf{m})$ is a weight vector for each $\mathbf{n}, \mathbf{m} \in \Z_{\geq 0}^{\Phi^+}$, each summand is an $\h$-submodule of $M$. We also observe that there is an isomorphism between $\vspan\{u(\mathbf{n}, \mathbf{m}) \otimes 1_{\lambda, 0}: \mathbf{n} \in \Z_{\geq 0}^{\Phi^+}\}$ and $M_{\lambda - \sum m_k \alpha_k}$ given by sending to $u(\mathbf{n}, \mathbf{m}) \otimes 1_{\lambda, 0}$ to $\Pi f_{\alpha_k}^{n_k} \otimes 1_{\lambda}$ for $\mathbf{n} \in \Z_{\geq 0 }^{\Phi^+}$.
\end{proof}

\begin{corollary}
\label{zeroblockcompmult}
$[M_{\lambda, 0}: L_{\lambda', 0}] = \sum_{\mathbf{m} \in \Z_{\geq 0}^{\Phi^+}} [M_{\lambda - \sum m_k \alpha_k}(\g): L_{\lambda'}(\g)] = \sum_{\chi \in \mathbb{Z}\Phi} p(\chi) [M_{\lambda + \chi}(\g): L_{\lambda'}(\g)]$, where $p$ is Konstant's partition function and $M_{\lambda + \chi}(\g)$ and $L_{\lambda'}(\g)$ are respectively the Verma module of weight $\lambda + \chi$ for $\g$ and the simple module of weight $\lambda'$ for $\g$.
\end{corollary}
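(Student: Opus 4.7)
The plan is to derive this corollary from the $\h$-module character decomposition established in the preceding lemma, together with the identification $L_{\lambda', 0} \cong I(L_{\lambda'}(\g))$ from Lemma \ref{simple0modules} and the uniqueness of composition multiplicities provided by Lemma \ref{compmultdefn}.

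First I would note that Lemma \ref{simple0modules} gives $L_{\lambda', 0} \cong I(L_{\lambda'}(\g))$, so $\ch L_{\lambda', 0} = \ch L_{\lambda'}(\g)$ as functions $\h^* \to \Z_{\geq 0}$. Combining the preceding lemma with the (finite) character decomposition $\ch M_\nu(\g) = \sum_{\lambda'} [M_\nu(\g) : L_{\lambda'}(\g)] \ch L_{\lambda'}(\g)$ in BGG category $\calO$, I would write
\[\ch M_{\lambda, 0} = \sum_{\mathbf{m} \in \Z_{\geq 0}^{\Phi^+}} \sum_{\lambda' \in \h^*} [M_{\lambda - \sum m_k \alpha_k}(\g) : L_{\lambda'}(\g)] \, \ch L_{\lambda', 0}.\]
Before swapping the order of summation I would check that for each fixed $\lambda'$ the inner $\mathbf{m}$-sum is finite: the standard fact that $[M_\nu(\g) : L_{\lambda'}(\g)] \neq 0$ forces $\lambda' \leq \nu$ in the usual partial order implies $\sum m_k \alpha_k \leq \lambda - \lambda'$, leaving only finitely many $\mathbf{m}$. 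Comparing the resulting expression with $\ch M_{\lambda, 0} = \sum_{\lambda'} [M_{\lambda, 0} : L_{\lambda', 0}] \ch L_{\lambda', 0}$ and invoking the uniqueness in Lemma \ref{compmultdefn} then gives the first equality.

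For the second equality I would reindex the sum by choosing $\chi \in \Z\Phi$ so that $\lambda + \chi = \lambda - \sum m_k \alpha_k$; the number of tuples $\mathbf{m} \in \Z_{\geq 0}^{\Phi^+}$ producing a given $\chi$ is by definition $p(\chi)$ in the sign convention of \cite[\S1.16]{H}, yielding the claimed formula. There is no serious obstacle in this corollary; the whole argument is essentially bookkeeping once the preceding lemma and Lemmas \ref{simple0modules} and \ref{compmultdefn} are in hand, with the only minor care being to match Humphreys' sign convention for $p$ and to justify the swap of summation order.
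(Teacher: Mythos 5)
Your proposal is correct and follows essentially the same route as the paper: the paper's proof is the one-line observation that $\ch L_{\lambda',0} = \ch L_{\lambda'}(\g)$ (Lemma \ref{simple0modules}) combined with the character decomposition of $M_{\lambda,0}$ from the preceding lemma, and your argument is just a more careful spelling-out of that same bookkeeping (including the finiteness check for the summation swap and the reindexing via Kostant's partition function).
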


\begin{proof}
By Lemma \ref{simple0modules}, $\ch L_{\lambda, 0} = \ch L_\lambda(\g)$ for all $\lambda \in \h^*$, so this follows from the previous lemma.   
\end{proof}

We now observe that our equivalence using the parabolic induction functor takes highest weight modules of weight $(\lambda, 0)$ in $\calO_\epsilon^0(\g^\mu)$ to highest weight modules of weight $(\lambda, \mu)$ in $\calO_\epsilon^\mu(\g)$. Hence it takes $M_{\lambda, 0} \in \calO_\epsilon^0(\g^\mu)$ to $M_{\lambda, \mu} \in \calO_\epsilon^\mu(\g)$, and similarly takes $L_{\lambda, 0} \in \calO_\epsilon^0(\g^\mu)$ to $L_{\lambda, \mu} \in \calO_\epsilon^\mu(\g)$, so by Corollaries \ref{compmultequivalence} and \ref{zeroblockcompmult} we have:

\begin{corollary}
$[M_{\lambda, \mu}: L_{\lambda', \mu}] = \sum_{\chi \in \mathbb{Z}\Phi} p(\chi) [M_{\lambda + \chi}(\g^\mu): L_{\lambda'}(\g^\mu)]$, where $M_{\lambda + \chi}(\g^\mu)$ and $L_{\lambda'}(\g^\mu)$ are the Verma module and the simple module for $\g^\mu$ of the appropriate weights.
\end{corollary}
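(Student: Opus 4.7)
My plan is to assemble two equivalences of categories from the previous sections and transport the composition-multiplicity computation of Corollary~\ref{zeroblockcompmult} across them.

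First I would compose the parabolic induction equivalence $I : \calO_\epsilon^\mu(\g^\mu) \to \calO_\epsilon^\mu(\g)$ of Theorem~\ref{maintheorem} with the character-twist equivalence $\calO_\epsilon^0(\g^\mu) \simeq \calO_\epsilon^\mu(\g^\mu)$ from the Remark immediately following Theorem~\ref{maintheorem}, obtaining an equivalence $F : \calO_\epsilon^0(\g^\mu) \to \calO_\epsilon^\mu(\g)$. The passage immediately preceding the corollary already asserts that $F$ sends highest weight modules of weight $(\lambda, 0)$ to highest weight modules of weight $(\lambda, \mu)$; I would unpack this to confirm that $F(M_{\lambda, 0}(\g^\mu)) \cong M_{\lambda, \mu}(\g)$ and $F(L_{\lambda', 0}(\g^\mu)) \cong L_{\lambda', \mu}(\g)$.

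The check for Verma modules is routine on each factor of $F$. The character twist acts as the identity on underlying vector spaces but shifts the $\overline{\h}$-action by $\mu$, so a highest weight generator of weight $(\lambda, 0)$ becomes one of weight $(\lambda, \mu)$; by the universal property of Verma modules this sends $M_{\lambda, 0}(\g^\mu)$ to $M_{\lambda, \mu}(\g^\mu)$. Parabolic induction then produces a highest weight module of weight $(\lambda, \mu)$ whose highest weight generator $1 \otimes 1_{\lambda, \mu}$ can be identified, via a short PBW argument rewriting $U(\g_\epsilon) \otimes_{U(\mathfrak{p}_\epsilon)} M_{\lambda, \mu}(\g^\mu)$ as $U(\g_\epsilon) \otimes_{U(\mathfrak{b}_\epsilon)} \C_{\lambda, \mu}$, with $M_{\lambda, \mu}(\g)$. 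For the simple modules I would simply note that equivalences of categories preserve simplicity, so $F(L_{\lambda', 0}(\g^\mu))$ is a simple highest weight module in $\calO_\epsilon^\mu(\g)$ of highest weight $(\lambda', \mu)$ and therefore equals $L_{\lambda', \mu}(\g)$.

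With these identifications in hand, Corollary~\ref{compmultequivalence} immediately yields $[M_{\lambda, \mu} : L_{\lambda', \mu}] = [M_{\lambda, 0}(\g^\mu) : L_{\lambda', 0}(\g^\mu)]$, and then applying Corollary~\ref{zeroblockcompmult} with $\g$ replaced by the reductive Lie algebra $\g^\mu$ gives the stated formula. No step is expected to present a genuine obstacle; the only point needing care is the compatibility of the two equivalences with the Verma and simple modules on each side, and this reduces to brief universality and simplicity-preservation arguments.
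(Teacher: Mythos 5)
Your proposal is correct and follows essentially the same route as the paper: compose the character-twist equivalence $\calO_\epsilon^0(\g^\mu) \simeq \calO_\epsilon^\mu(\g^\mu)$ with parabolic induction, observe that the composite sends $M_{\lambda,0}(\g^\mu)$ to $M_{\lambda,\mu}(\g)$ and $L_{\lambda',0}(\g^\mu)$ to $L_{\lambda',\mu}(\g)$, and then apply Corollaries \ref{compmultequivalence} and \ref{zeroblockcompmult}. The only difference is that you spell out the universality and simplicity-preservation checks that the paper leaves implicit, which is harmless.
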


We now define an action $\bullet_2$ of $W$ on $\h^*$ by $w \bullet_2 \lambda := w(\lambda + 2\rho) - 2\rho$ where $\rho$ is half the sum of the positive roots. Note this is similar to the normal dot action of $W$, but with a shift of $2\rho$ rather than $\rho$. We observe that if $\alpha$ is any simple root, then since $s_\alpha(\alpha) = -\alpha$ and $s_\alpha$ permutes the other positive roots we have $s_\alpha(\rho) = \rho - \alpha$. We then have:
\begin{align*}
s_\alpha \bullet_2 \lambda &= s_\alpha(\lambda + 2\rho) - 2\rho \\
&= s_\alpha(\lambda) + 2\rho - 2\alpha - 2\rho \\
&= s_\alpha(\lambda) - 2\alpha
\end{align*}
Hence by our calculations in the proof of Lemma \ref{maintwistinglemma}(a), the twisting functors $T_\alpha$ take highest weight modules of weight $(\lambda, \mu)$ to highest weight modules of weight $(s_\alpha \bullet_2 \lambda, s_\alpha(\mu))$ and hence take $M_{\lambda, \mu}$ to $M_{s_\alpha \bullet_2 \lambda, s_\alpha(\mu)}$ and $L_{\lambda, \mu}$ to $L_{s_\alpha \bullet_2 \lambda, s_\alpha(\mu)}$. Applying Corollary \ref{compmultequivalence} again, we obtain:

\begin{corollary}
Let $\lambda, \lambda', \mu \in \h^*$, and let $w \in W$ be of the form $w = s_{\alpha_n} s_{\alpha_{n-1}} \cdots s_{\alpha_1}$ for some simple reflections $s_{\alpha_i}$ such that:

(a) $\g^{w(\mu)} = \mathfrak{l}$ for some Levi factor $\mathfrak{l}$ of a parabolic $\p$ and,

(b) For each $1 \leq i \leq n$, we have $((s_{\alpha_{i-1}} \cdots s_{\alpha_1})\mu)(h_{\alpha_i}) \neq 0$.

Then:
\begin{align*}
[M_{\lambda, \mu}: L_{\lambda', \mu'}] = [M_{w \bullet_2 \lambda, w(\mu)}: L_{w \bullet_2 \lambda', w(\mu')}] = \delta_{\mu, \mu'}\sum_{\chi \in \mathbb{Z}\Phi} p(\chi) [M_{w \bullet_2 \lambda + \chi}(\g^{w(\mu)}): L_{w \bullet_2 \lambda'}(\g^{w(\mu)})]
\end{align*}
where $M_{w \bullet_2 \lambda + \chi}(\g^{w(\mu)})$ and $L_{w \bullet_2 \lambda'}(\g^{w(\mu)})$ are the Verma module and the simple module for $\g^{w(\mu)}$ of the appropriate weights.
\end{corollary}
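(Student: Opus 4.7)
The plan is to deduce the corollary by chaining the twisting functor equivalences of Theorem~\ref{maintwistingtheorem} with the parabolic induction equivalence of the preceding corollary. First I would dispose of the case $\mu \neq \mu'$: since $M_{\lambda, \mu} \in \calO_\epsilon^\mu$ and $L_{\lambda', \mu'} \in \calO_\epsilon^{\mu'}$ lie in distinct summands of the decomposition of Corollary~\ref{calOdecomposition}, the composition multiplicity vanishes, accounting for the $\delta_{\mu, \mu'}$. In what follows I therefore assume $\mu = \mu'$.

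For the first equality, set $w_i = s_{\alpha_i} s_{\alpha_{i-1}} \cdots s_{\alpha_1}$ with the convention $w_0 = e$, so that condition (b) reads $(w_{i-1}(\mu))(h_{\alpha_i}) \neq 0$; this is precisely the hypothesis needed to invoke Theorem~\ref{maintwistingtheorem}, giving an equivalence $T_{\alpha_i} \colon \calO_\epsilon^{w_{i-1}(\mu)}(\g) \to \calO_\epsilon^{w_i(\mu)}(\g)$. The computation from the proof of Lemma~\ref{maintwistinglemma}(a) together with the identity $s_\alpha \bullet_2 \lambda'' = s_\alpha(\lambda'') - 2\alpha$ shows that $T_{\alpha_i}$ carries any highest weight module of weight $(\lambda'', w_{i-1}(\mu))$ to one of weight $(s_{\alpha_i} \bullet_2 \lambda'', w_i(\mu))$. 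Being an equivalence, $T_{\alpha_i}$ takes simples to simples, so $T_{\alpha_i} L_{\lambda'', w_{i-1}(\mu)} \cong L_{s_{\alpha_i} \bullet_2 \lambda'', w_i(\mu)}$. For Verma modules I would appeal to the universal property: $M_{\lambda'', w_{i-1}(\mu)}$ is characterised (up to unique isomorphism) by admitting a unique-up-to-scalar map onto every highest weight module of that weight, and this categorical property is preserved by the equivalence, forcing $T_{\alpha_i} M_{\lambda'', w_{i-1}(\mu)} \cong M_{s_{\alpha_i} \bullet_2 \lambda'', w_i(\mu)}$. Composing these $n$ equivalences and applying Corollary~\ref{compmultequivalence} at each step yields $[M_{\lambda, \mu}: L_{\lambda', \mu}] = [M_{w \bullet_2 \lambda, w(\mu)}: L_{w \bullet_2 \lambda', w(\mu)}]$, the first equality.

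For the second equality, condition (a) ensures that $\g^{w(\mu)}$ is the Levi factor of a standard parabolic, so the preceding corollary applies verbatim with $\mu$ replaced by $w(\mu)$, yielding
\[
[M_{w \bullet_2 \lambda, w(\mu)}: L_{w \bullet_2 \lambda', w(\mu)}] = \sum_{\chi \in \Z\Phi} p(\chi) \, [M_{w \bullet_2 \lambda + \chi}(\g^{w(\mu)}): L_{w \bullet_2 \lambda'}(\g^{w(\mu)})].
\]
Reinstating the $\delta_{\mu, \mu'}$ factor from the first step then gives the corollary.

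The only real subtlety I expect is the verification that each twisting functor sends a Verma module to a Verma module, rather than merely to a highest weight module of the correct weight; this is where the universal-property argument above is required. Everything else is a formal concatenation of previously established equivalences, with their effect on composition multiplicities recorded by Corollary~\ref{compmultequivalence}, so I do not anticipate any serious technical obstacle beyond careful bookkeeping of the shift $\bullet_2$ under each $s_{\alpha_i}$.
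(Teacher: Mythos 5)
Your proposal is correct and follows essentially the same route as the paper: kill the $\mu \neq \mu'$ case via the block decomposition, iterate the twisting equivalences $T_{\alpha_i}$ (condition (b) being exactly the hypothesis of Theorem \ref{maintwistingtheorem}), track the weight shift $s_\alpha \bullet_2$ to see that Vermas go to Vermas and simples to simples, invoke Corollary \ref{compmultequivalence}, and finish with the parabolic induction corollary via condition (a). Your universal-property justification that $T_{\alpha_i}$ sends Verma modules to Verma modules is a slightly more explicit version of what the paper leaves implicit (one can close it cleanly by noting that Lemma \ref{maintwistinglemma}(a) and (d) give surjections in both directions between $T_{\alpha_i} M_{\lambda'', w_{i-1}(\mu)}$ and $M_{s_{\alpha_i}\bullet_2 \lambda'', w_i(\mu)}$, which must be isomorphisms by finite-dimensionality of weight spaces).
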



\begin{thebibliography}{RCW}

\bibitem[AS]{AS} {\sc H. H. Andersen and C. Stroppel}, Twisting functors on $\mathcal{O}$. Represent. Theory {\bf 7} (2003), 681–699.

\bibitem[CG]{CG} {\sc V. Chari and J. Greenstein}, A family of Koszul algebras arising from finite-
dimensional representations of simple Lie algebras. \emph{Adv. Math.}, {\bf 220} (2009), 1193–1221.
\bibitem[CM]{CM} {\sc D. Collingwood and W. McGovern}, `Nilpotent Orbits in Semisimple Lie Algebras'. Van Nostrand Reinhold, 1993

\bibitem[D]{D} {\sc J. Dixmier}, `Enveloping Algebras'. North-Holland Publishing Company, 1977

\bibitem[FP]{FP} {\sc E. Friedlander and B. Parshall}, Modular Representation Theory of Lie Algebras. \emph{Amer. J. Math.} {\bf 110} (1988), no. 6, 1055–1093.

\bibitem[H]{H} {\sc J. Humphreys}, `Representations of Semisimple Lie Algebras in the BGG category $\calO$'. \emph{Graduate Studies in Mathematics}, {\bf 94}. American Mathematical Society, 2008.

\bibitem[HTT]{HTT} {\sc R. Hotta, K. Takeuchi and T. Tanisaki}, 'D-modules, perverse sheaves, and representation theory'. \emph{Progress in Mathematics}, {\bf 236}. Birkhäuser Boston, Inc. 2008.

\bibitem[K]{K} {\sc B. Konstant}, Lie group representations on polynomial rings. \emph{Amer. J. Math.} {\bf 85} (1963), 327–404.

\bibitem[MR]{MR} {\sc J. McConnell and J. Robson}, `Noncommutative Noetherian Rings'. \emph{Graduate Studies in Mathematics}, {\bf 30}. American Mathematical Society, 2001

\bibitem[MS]{MS} {\sc V. Mazorchuk and C. Söderberg}, Category $\calO$ for Takiff $\mathfrak{sl}_2$. \emph{J. Math. Phys.} {\bf 60} (2019), no. 11, 15 pp.

\bibitem[R]{R} {\sc R. Richardson}, Derivatives of Invariant Polynomials on a Semisimple Lie Algebra. \emph{Proc. Centre Math. Anal. Austral. Nat. Univ.}, {\bf 15}, Austral. Nat. Univ., Canberra, 1987.

\bibitem[RCW]{RCW} {\sc A. Rocha-Caridi and N. Wallach}, Projective modules over graded Lie algebras. \emph{I, Math. Z.}, {\bf 180} (1982), no. 2.

\bibitem[T]{T} {\sc S. Takiff}, Rings of Invariant Polynomials for a Class of Lie Algebras. \emph{Trans. Amer. Math. Soc.} {\bf 160} (1971), 249–262.

\bibitem[W]{W} {\sc B. Wilson}, Highest-weight theory for truncated current Lie algebras. \emph{J. Algebra} {\bf 336} (2011), 1–27.
\end{thebibliography}
\end{document}